\tikzset{individu/.style={draw,thick}}
\numberwithin{equation}{section}
\theoremstyle{plain}
\newtheorem{theorem}{Theorem}[section]
\newtheorem{corollary}[theorem]{Corollary}
\newtheorem{lemma}[theorem]{Lemma}
\newtheorem{proposition}[theorem]{Proposition}
\theoremstyle{definition}
\newtheorem{definition}[theorem]{Definition}
\theoremstyle{remark}
\newtheorem{remark}[theorem]{Remark}
\newcommand{\Z}{\mathbb{Z}}
\newcommand{\R}{\mathbb{R}}
\newcommand{\T}{\mathbb{T}}
\newcommand{\calS}{\mathcal{S}}
\newcommand{\calC}{\mathcal{C}}
\newcommand{\calD}{\mathcal{D}}
\newcommand{\calI}{\mathcal{I}}
\newcommand{\calK}{\mathcal{K}}
\newcommand{\calQ}{\mathcal{Q}}
\newcommand{\calH}{\mathcal{H}}
\newcommand{\calA}{\mathcal{A}}
\newcommand{\calM}{\mathcal{M}}
\DeclareMathOperator{\lcm}{lcm}
\renewcommand{\tilde}[1]{\widetilde{#1}}
\renewcommand{\epsilon}{\varepsilon}
\renewcommand{\phi}{\varphi}
\newcommand{\Addresses}{{
  \bigskip
  \footnotesize
  
  \textsc{Unit\'e de Math\'ematiques Pures et Appliqu\'ees, \'Ecole normale sup\'erieure de Lyon, 46 all\'ee d'Italie, 69364 Lyon Cedex 07, France}\par\nopagebreak
  \textit{E-mail address}: \texttt{sanjay.ramassamy@ens-lyon.fr}

}}
\title{Miquel dynamics for circle patterns}
\author{Sanjay Ramassamy}
\date{\today}
\begin{document}

\maketitle

\begin{abstract}
We study a new discrete-time dynamical system on circle patterns with the combinatorics of the square grid. This dynamics, called Miquel dynamics, relies on Miquel's six circles theorem. We provide a coordinatization of the appropriate space of circle patterns on which the dynamics acts and use it to derive local recurrence formulas. Isoradial circle patterns arise as periodic points of Miquel dynamics. Furthermore, we prove that certain signed sums of intersection angles are preserved by the dynamics. Finally, when the initial circle pattern is spatially biperiodic with a fundamental domain of size two by two, we show that the appropriately normalized motion of intersection points of circles takes place along an explicit quartic curve.
\end{abstract}

\section{Introduction}

This paper is about a conjecturally integrable discrete-time dynamical system on circle patterns.

Circle patterns are configurations of circles on a surface with combinatorially prescribed intersections. They have been actively studied since Thurston conjectured that circle packings (a variant of circle patterns where neighboring circles are tangent) provide discrete approximations of Riemann mappings between two simply connected domains~\cite{Thurston}. The conjecture was proved by Rodin and Sullivan~\cite{RS} and generalized by He and Schramm~\cite{HS1,HS2}. Circle patterns have been used to define discrete analogs of holomorphic functions such as power functions, exponential functions and logarithms~\cite{AB,Schramm,BHS}. In order to construct circle patterns, several articles have followed a variational approach, where the intersection angles between neighboring circles is fixed and the radii of the circles are the solution of a variational problem (see~\cite{BS} and the references therein). The consistency equations verified by the parameters describing a circle pattern have been shown to be integrable in many cases~\cite{BP,BH,BHS}.

Discrete integrable systems have been the subject of intense studies in recent years. The most popular one coming from geometry is probably Schwartz's pentagram map~\cite{Schwartz1}, a discrete-time dynamical system on some space of polygons. Ovsienko, Schwartz and Tabachnikov~\cite{OST1,OST2} proved its Liouville integrability by defining an invariant Poisson bracket on the space of polygons and showing that invariants of the dynamics found in~\cite{Schwartz2} Poisson commute. Soloviev~\cite{Soloviev} proved its algebro-geometric integrability by deriving its spectral curve. Glick~\cite{Glick} established a connection with cluster algebras by identifying the recurrence formulas for the pentagram as some mutations of coefficient variables.

Another discrete integrable system, this time coming from statistical mechanics, is the dimer model. Goncharov and Kenyon~\cite{GK} studied its Liouville integrability and its algebro-geometric integrability and related it to cluster algebras. It turns out that in the scaling limit, the dimer model has several features reminiscent of those of circle patterns, though a direct connection has yet to be established. For example, Cohn, Kenyon and Propp~\cite{CKP} proved that the limit shape for dimers on the square grid was the solution of some variational problem with a functional defined using Milnor's Lobachevsky function, similar to the variational problems arising for circle patterns in~\cite{BS}. Furthermore, Kenyon and Okounkov~\cite{KO} defined a conformal structure on the limit shape, which turned out to be the right framework to describe its fluctuations~\cite{Kenyon2}. Thus, it is natural to ask whether there exists an integrable system on the space of circle patterns which would resemble the dimer integrable system.

The space on which our dynamics takes place is the space of square grid circle patterns, which are maps from the vertices of the infinite square grid $\Z^2$ to $\R^2$ such that any four vertices around a face of $\Z^2$ get mapped to four concyclic points. The circles associated with these faces are colored black or white in a checkerboard fashion. Note that our square grid circle patterns differ from Schramm's ``circle patterns with the combinatorics of the square grid''~\cite{Schramm}, because we do not require neighboring circles to intersect orthogonally\footnote{Actually Schramm's circle patterns will be fixed points of our dynamics.}.

\begin{figure}[htpb]
\centering
\includegraphics[height=2in]{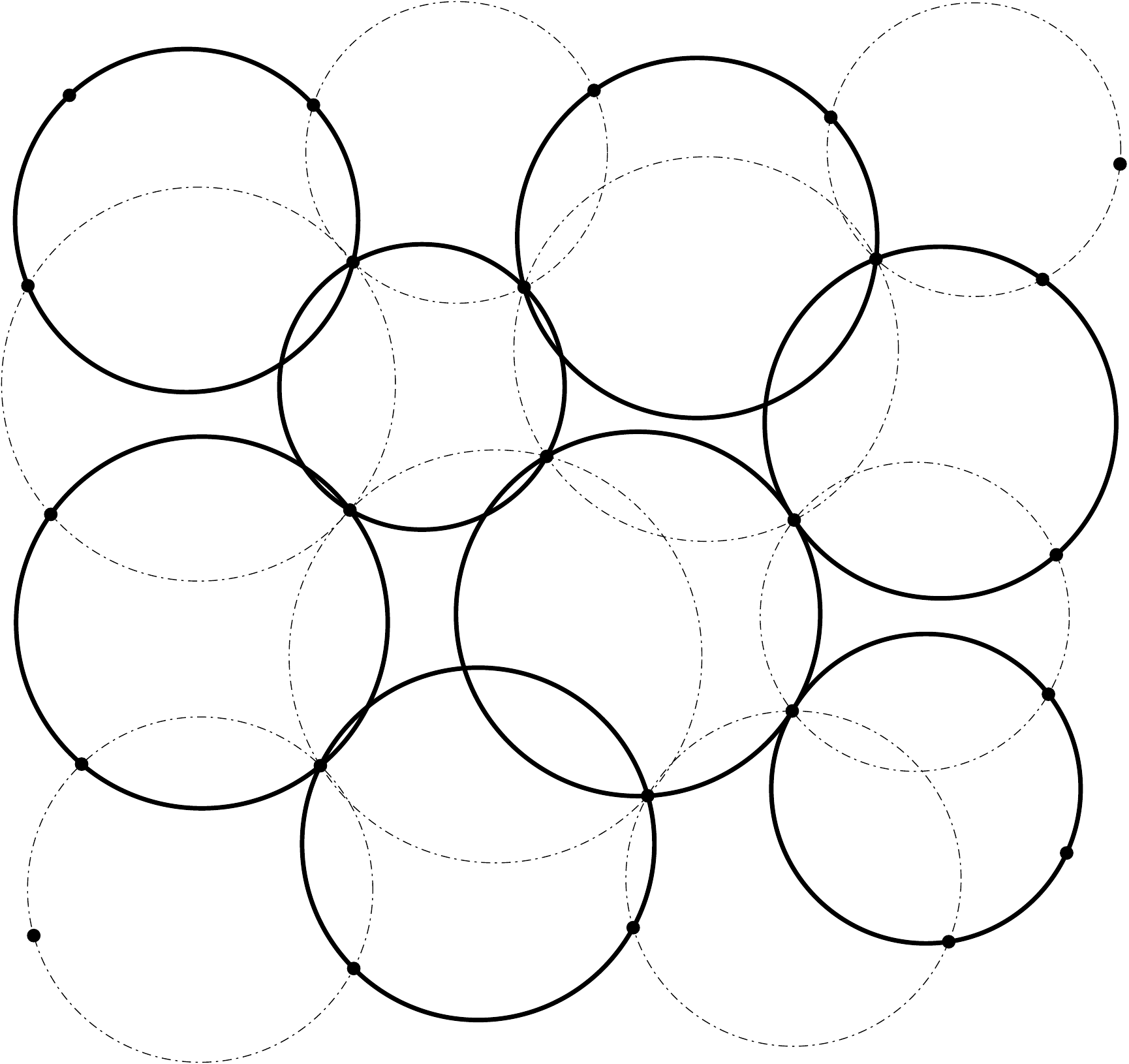}
\caption{A $4$ by $4$ portion of a square grid circle pattern. Black (resp. white) circles are represented with a bold (resp. dash-dotted) stroke.}
\label{fig:circlepattern}
\end{figure}

We now explain how to mutate a black circle. A black circle is surrounded by four white circles, corresponding the faces North, West, South and East of the black face. These four white circles have eight points of intersection, four of which lie on the black circle (see Figure~\ref{fig:blackmutation}). Miquel's six circles theorem guarantees that the other four intersection points are concyclic. The mutation of the black circle consists in erasing the original black circle and replacing it by the circle going through these four other points of intersection. Miquel dynamics is defined as the following discrete-time dynamical system. Start with a square grid circle pattern at time $0$. At odd (resp. even) times, mutate all the black (resp. white) circles simultaneously.

\begin{figure}[htpb]
\centering
\includegraphics[height=2in]{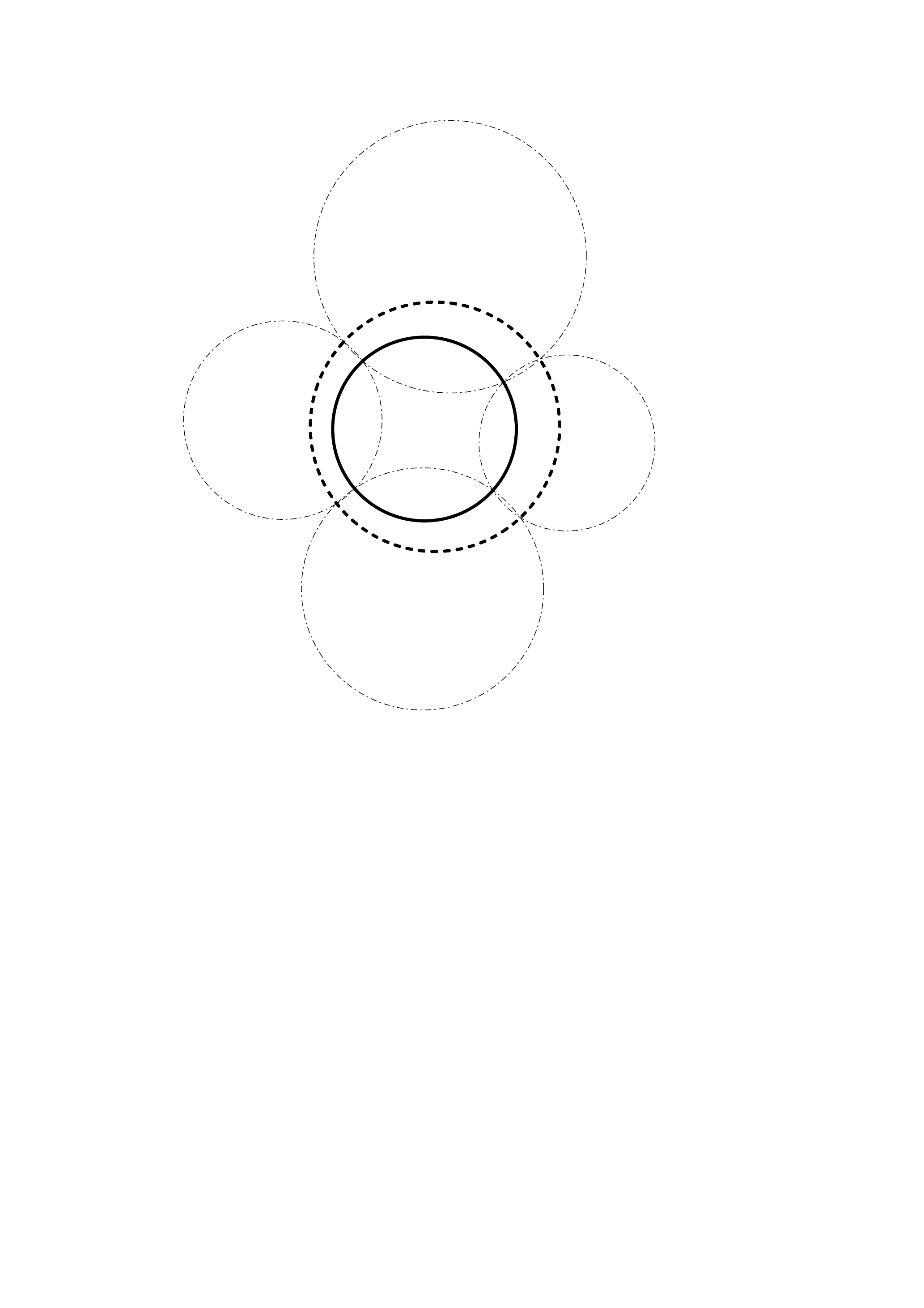}
\caption{Black mutation turns the original black circle (solid stroke) into a new black circle (dashed stroke).}
\label{fig:blackmutation}
\end{figure}

Observe that the standard embedding of $\Z^2$ is a fixed point of this dynamics, because any two diagonally neighboring circles are tangent. Miquel dynamics was first introduced by Richard Kenyon, who also conjectured its integrable nature (Liouville integrability, algebro-geometric integrability, connection to cluster algebras) when the time $0$ circle pattern is spatially biperiodic\footnote{private communication}. We illustrate this conjecture in Figure~\ref{fig:44integrability} by the plot of the relative motion of one intersection point with respect to another one when the fundamental domain is of size $4$ by $4$, in which case the motion seems to take place on the two-dimensional projection of a higher-dimensional torus.

\begin{figure}[htpb]
\centering
\includegraphics[height=2in]{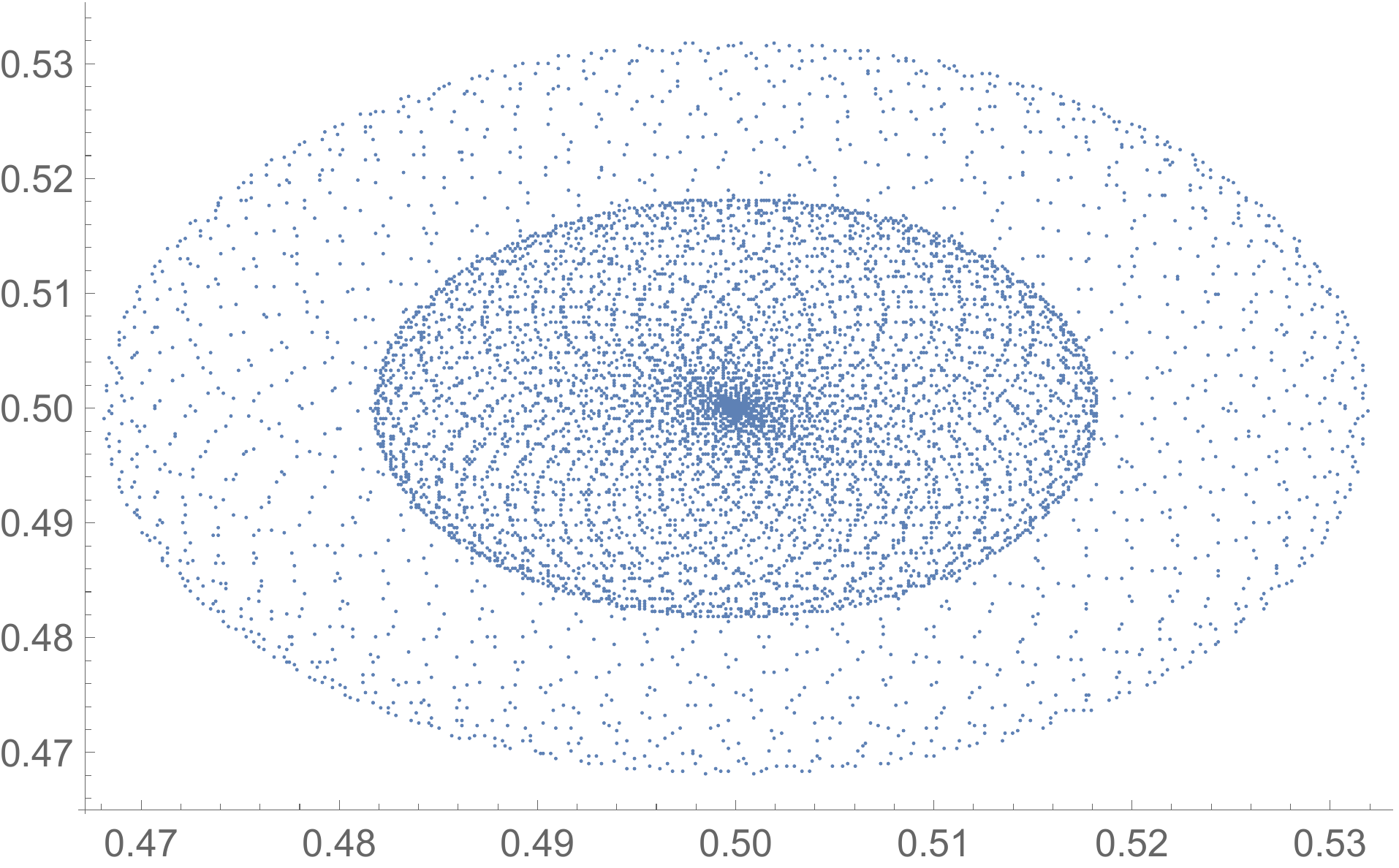}
\caption{Computer simulation of the relative motion of one intersection point with respect to another one when the fundamental domain is of size $4$ by $4$.}
\label{fig:44integrability}
\end{figure}

One can consider circle patterns with combinatorics other than the square grid and mutate a circle whenever we have a face of degree $4$. However, if we start with an arbitrary graph, we will usually not be able to iterate the dynamics indefinitely. Note that the integrability of Miquel dynamics is that of a $2+1$-dimensional system (two spatial dimensions and one time dimension) while most of the integrability properties of circle patterns that have been studied so far (for example~\cite{BP,BH,BHS}) concern the consistency equations of a two-dimensional system. Bazhanov, Mangazeev and Sergeev~\cite{BMS2} studied a dynamical system on circle patterns with the combinatorics of a graph obtained by projecting some stepped surface to a plane. The definition of their dynamics also relied on Miquel's theorem, but their local mutation scheme is different from ours : starting from three circles, they produce three new circles, while we start with five circles to produce a new circle.

We now list the main results obtained in this paper.

\begin{itemize}
\item We provide global coordinates on the space of spatially biperiodic square grid circle patterns considered up to similarity. The coordinates are the angles $\phi$ under which an edge is seen when standing at the circumcenter of a face. These angles must satisfy certain relations in order to represent a biperiodic square grid circle pattern.
\item Using these coordinates (or rather the variables $X=e^{i\phi}$) we derive local recurrence formulas describing Miquel dynamics. These formulas are rational expressions in the variables $X$ which are reminiscent of the coefficient variables mutations in cluster algebras.
\item A biperiodic square grid circle pattern can naturally be projected to a torus $\T$. For any biperiodic square grid circle pattern $S$, we define a group homomorphism
\[
\gamma_S:H_1(\T,\Z)\rightarrow\R/(2\pi\Z)
\]
corresponding to a signed sum of intersection angles of circles along a loop around the torus, and we show that $\gamma_S$ is (essentially) preserved by Miquel dynamics.
\item When the fundamental domain of the biperiodic square grid circle pattern is a two by two array of faces, we prove that the appropriately renormalized trajectory of an intersection point of circles is supported by some explicit quartic curve.
\end{itemize}

The rest of the paper is organized as follows. In Section~\ref{sec:Miquel}, we recall Miquel's theorem and provide an effective version of this theorem. In Section~\ref{sec:patterns}, we define the space of biperiodic square grid circle patterns up to similarity as well as Miquel dynamics on that space. Section~\ref{sec:coordinates} provides a coordinatization of the aforementioned space. These coordinates are then used in Section~\ref{sec:recurrenceformulas} to derive recurrence formulas for Miquel dynamics. As an application, we show in Section~\ref{sec:isoradial} that isoradial patterns are periodic points of the dynamics. Section~\ref{sec:conserved} exposes some conserved quantities of the dynamics. Finally in Section~\ref{sec:twobytwo} we study the trajectory of vertices when the fundamental domain of the biperiodic square grid circle pattern is a two by two array of faces.

\section{Miquel's theorem}
\label{sec:Miquel}

Consider four circles $\calC_1,\calC_2,\calC_3$ and $\calC_4$ on the Riemann sphere. Assume that two consecutive circles always intersect in two points : $\calC_i\cap\calC_{i+1}=\left\{A_{i,i+1},B_{i,i+1}\right\}$ for $1\leq i\leq 4$, where the indices are considered modulo $4$ (see Figure~\ref{fig:miquelthm}). Miquel's theorem~\cite{Miquel} says when we can draw two additional circles through these intersection points :

\begin{theorem}[Miquel's theorem]
The points $A_{1,2},A_{2,3},A_{3,4},A_{4,1}$ are concyclic if and only if the points $B_{1,2},B_{2,3},B_{3,4},B_{4,1}$ are concyclic.
\end{theorem}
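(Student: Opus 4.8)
The plan is to prove the statement as an equivalence using only M\"obius invariance and directed angles modulo $\pi$, the latter being essential to treat lines and circles (including ``circles through $\infty$'' on the Riemann sphere) on an equal footing and to dispose of all the sign and configuration subtleties at once. Throughout I write $\angle(\ell,m)\in\R/\pi\Z$ for the directed angle carrying a line $\ell$ to a line $m$, and I use the inscribed-angle criterion in its directed form: four points $W,X,Y,Z$ are concyclic or collinear if and only if $\angle(WY,WZ)=\angle(XY,XZ)$.

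First I would exploit the fact that concyclicity, collinearity, and the entire incidence configuration are preserved by inversions. Apply an inversion $\iota$ centered at the intersection point $A_{1,2}\in\calC_1\cap\calC_2$. Since $\calC_1$ and $\calC_2$ pass through the center, their images are two lines $\ell_1,\ell_2$ meeting at $\iota(B_{1,2})$, while $\calC_3,\calC_4$ become ordinary circles $\calC_3',\calC_4'$. Crucially, the hypothesis that $A_{1,2},A_{2,3},A_{3,4},A_{4,1}$ are concyclic means the circle through them passes through the center $A_{1,2}$, so its image is a \emph{line}: the hypothesis becomes the collinearity of $\iota(A_{2,3}),\iota(A_{3,4}),\iota(A_{4,1})$. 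By contrast the circle through the $B$'s does not pass through $A_{1,2}$, so the conclusion to be reached is genuine concyclicity of the four image points $\iota(B_{i,i+1})$.

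This reduces the theorem to Miquel's pivot (triangle) theorem. Indeed, set $X=\iota(B_{1,2})=\ell_1\cap\ell_2$, $Y=\iota(A_{2,3})$, $Z=\iota(A_{4,1})$; the collinearity hypothesis says $P:=\iota(A_{3,4})$ lies on line $YZ$. Then $\calC_3'$ is the circle through the vertex $Y$ and the two side points $R:=\iota(B_{2,3})\in XY$ and $P\in YZ$, while $\calC_4'$ is the circle through $Z$, $Q:=\iota(B_{4,1})\in XZ$, and $P\in YZ$; these two circles already share $P$, and their second intersection is $M:=\iota(B_{3,4})$. A three-line angle chase---$\angle(MR,MP)=\angle(YR,YP)=\angle(XY,YZ)$ read on $\calC_3'$ and $\angle(MP,MQ)=\angle(ZP,ZQ)=\angle(YZ,ZX)$ read on $\calC_4'$, added to give $\angle(MR,MQ)=\angle(XY,ZX)=\angle(XR,XQ)$---then places $M$ on the circle through $X$, $R$, $Q$, which is exactly the desired concyclicity of the four points $\iota(B_{i,i+1})$. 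Pulling back through $\iota$ gives the four $B_{i,i+1}$ concyclic.

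Finally, the ``only if'' direction follows for free: the statement is symmetric under relabeling $A_{i,i+1}\leftrightarrow B_{i,i+1}$ (the two names of each intersection point play interchangeable roles), so the same argument run after inverting at $B_{1,2}$ yields the reverse implication. The main obstacle I anticipate is not the angle chase, which is short, but the careful handling of degenerate positions---tangencies, accidental collinearities, and the point sent to $\infty$ by $\iota$; working throughout with directed angles modulo $\pi$ and on the Riemann sphere is precisely what makes every such case go through uniformly and keeps the argument an exact equivalence. A fully synthetic alternative, bypassing inversion, is to run the same directed-angle identity on each of the four circles $\calC_i$ and telescope the four relations so that the single equation expressing concyclicity of the $A$-points transforms into the one for the $B$-points; I would keep the inversion route as the primary one because it isolates the pivot theorem as the genuine geometric content.
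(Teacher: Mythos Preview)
Your proof is correct: inverting at $A_{1,2}$ sends $\calC_1,\calC_2$ to lines and the hypothetical circle through the $A$-points to a line, reducing the six-circle statement to Miquel's pivot theorem for the triangle $XYZ$ with side points $P,Q,R$; your three-line directed-angle chase for the pivot theorem is sound, and the $A\leftrightarrow B$ symmetry gives the converse.

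The paper, however, does not prove this theorem. It is stated with a citation to Miquel and used as a known result. The nearest thing to a proof in the paper is the argument for the subsequent proposition (Effective Miquel's theorem), which shows that concyclicity of the $A_{i,i+1}$ is equivalent to the identity $\theta_{1,2}+\theta_{3,4}=\theta_{2,3}+\theta_{4,1}$ by comparing the alternating angle sums of the rectilinear and curvilinear quadrilaterals on the $A$-points. Running that same argument on the $B$-points would independently yield Miquel's theorem (the exterior intersection angles flip sign under $A\leftrightarrow B$, leaving the identity unchanged), but the paper does not spell this out and simply invokes the cited theorem for the second equivalence.

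So your approach --- inversion followed by the pivot theorem --- is genuinely different from anything the paper does. It is self-contained and, thanks to directed angles modulo $\pi$, handles all configurations uniformly. The paper's implicit route through the $\theta$-identity has the complementary virtue of producing an effective numerical criterion for the Miquel configuration, which is precisely what the rest of the paper exploits.
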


\begin{figure}[htpb]
\centering
\includegraphics[height=3in]{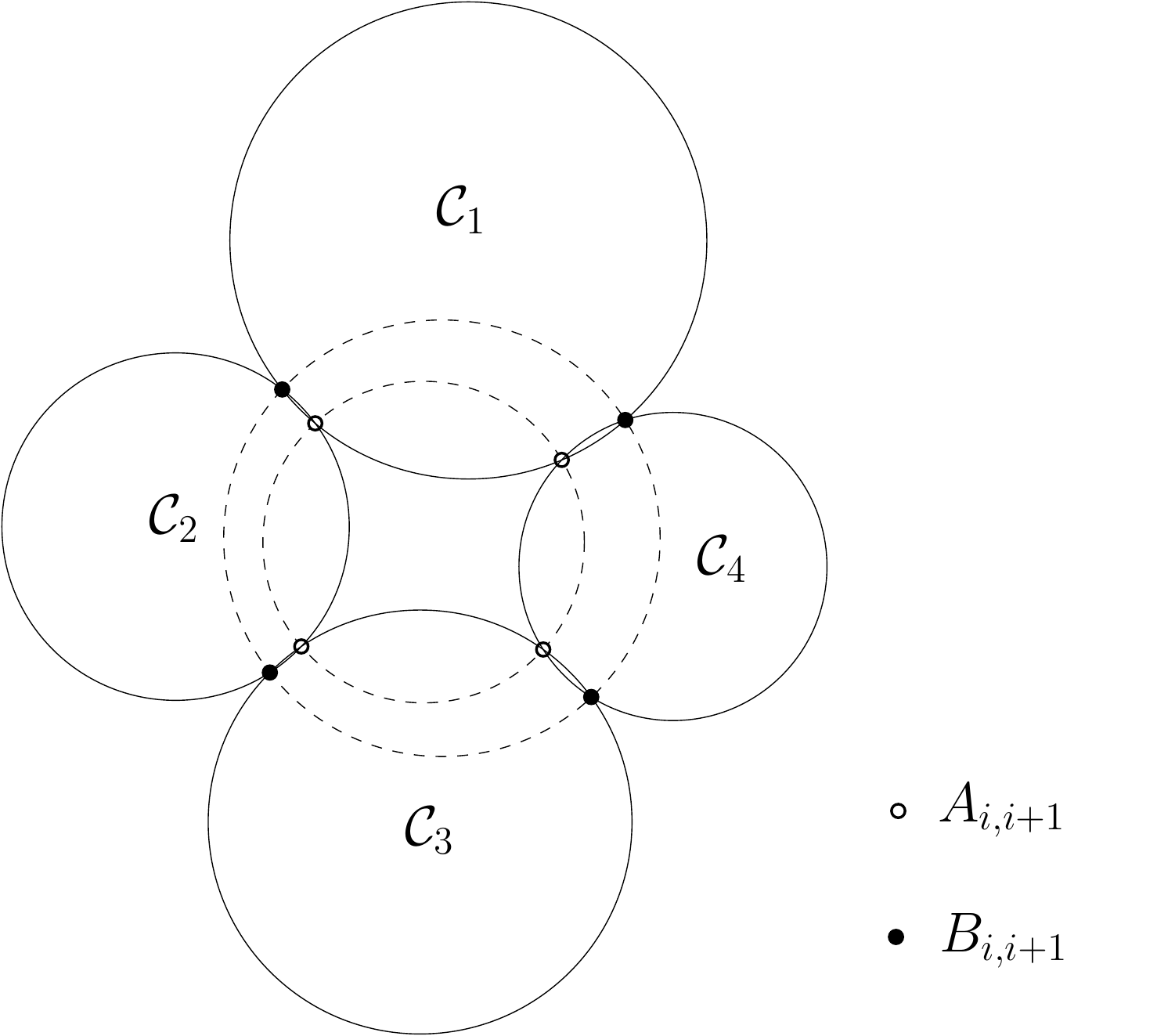}
\caption{Illustration of Miquel's theorem.}
\label{fig:miquelthm}
\end{figure}

A natural question to ask is : on what condition on the circles $\calC_i$ can we draw the two additional circles ? In order to answer it, we need to define a notion of intersection angle between circles. We can stereographically project the sphere to a plane from a point $P$ which lies on no circumcircle of any three points in $\left\{A_{1,2},A_{2,3},A_{3,4},A_{4,1},B_{1,2},B_{2,3},B_{3,4},B_{4,1}\right\}$. With such a projection, circles are guaranteed to be mapped to actual circles and not straight lines. Given three points $A,B,C$ in the plane, we denote by $\angle ABC$ the oriented angle between the vectors $\overrightarrow{BA}$ and $\overrightarrow{BC}$. In particular, $\angle CBA=2\pi-\angle ABC$. Let $O_i$ denote the center of the circle $\calC_i$. Denote by $\theta_{i,i+1}$ the \emph{exterior intersection angle} between the circles $\calC_i$ and $\calC_{i+1}$ (see Figure~\ref{fig:thetadef}) :
\[
\theta_{i,i+1}=\angle O_iA_{i,i+1}O_{i+1}.
\]
\begin{figure}[htpb]
\centering
\includegraphics[height=2in]{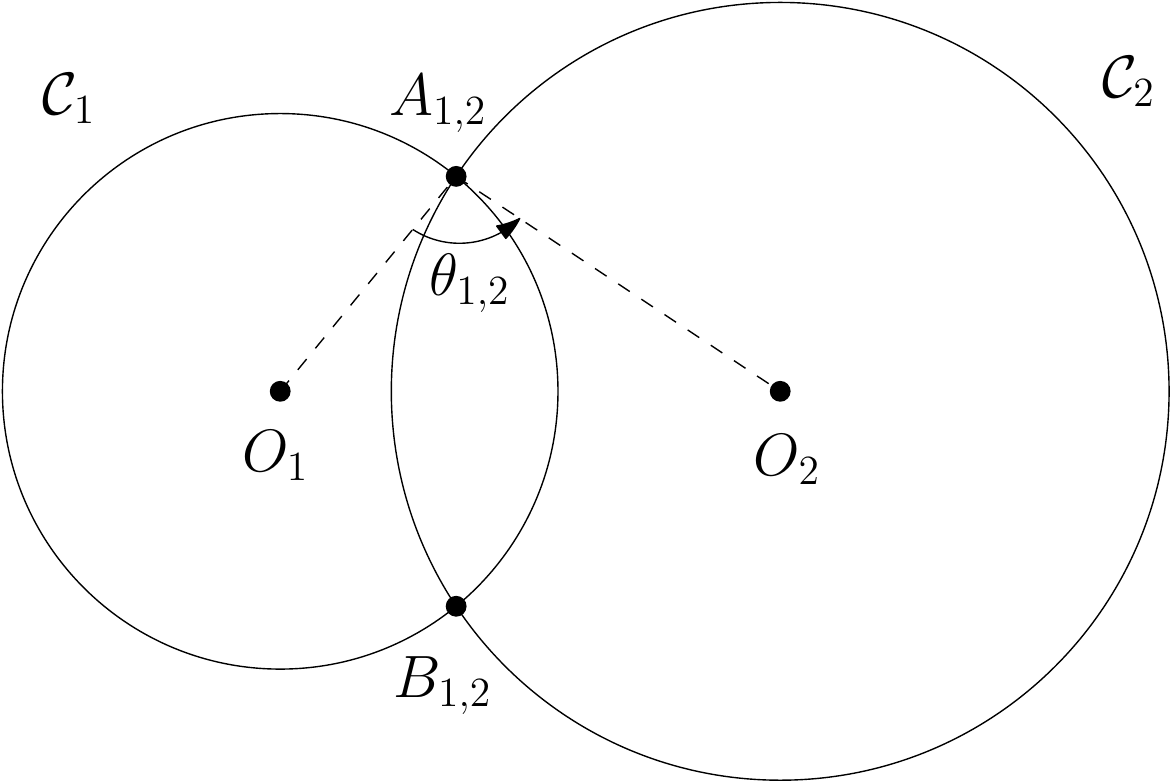}
\caption{Exterior intersection angle $\theta_{1,2}$.}
\label{fig:thetadef}
\end{figure}

We can now state :

\begin{proposition}[Effective Miquel's theorem]
\label{prop:effective}
The following equivalences hold:
\begin{align*}
\theta_{1,2}+\theta_{3,4}=\theta_{2,3}+\theta_{4,1} &\Leftrightarrow A_{1,2},A_{2,3},A_{3,4},A_{4,1} \text{ concyclic } \\
&\Leftrightarrow B_{1,2},B_{2,3},B_{3,4},B_{4,1} \text{ concyclic.}
\end{align*}
\end{proposition}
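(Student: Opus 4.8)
My plan is to establish the first equivalence directly by an oriented angle chase, and then to read off the equivalence with the concyclicity of the $B_{i,i+1}$ from Miquel's theorem itself. After the stereographic projection already fixed in the setup, all four $\calC_i$ are genuine circles and all eight intersection points lie in the plane, so I would work throughout with oriented angles modulo $2\pi$; their additivity $\angle XVZ = \angle XVY + \angle YVZ$ holds unconditionally, which is exactly what lets me avoid any case analysis in the main step.

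Set $P_1 = A_{1,2}$, $P_2 = A_{2,3}$, $P_3 = A_{3,4}$, $P_4 = A_{4,1}$. The first observation is that $P_1,P_2$ both lie on $\calC_2$, $P_2,P_3$ on $\calC_3$, and so on cyclically: the side $P_kP_{k+1}$ of the quadrilateral is a chord of $\calC_{k+1}$, while the vertex $P_k = A_{i,i+1}$ is an intersection point of $\calC_i$ and $\calC_{i+1}$. At such a vertex I would insert the two rays toward the centres $O_i$ and $O_{i+1}$ and use additivity to split the vertex angle as
\[
\angle A_{i-1,i}A_{i,i+1}A_{i+1,i+2} \;=\; \angle A_{i-1,i}A_{i,i+1}O_i \;+\; \theta_{i,i+1} \;+\; \angle O_{i+1}A_{i,i+1}A_{i+1,i+2},
\]
the middle term being precisely $\angle O_iA_{i,i+1}O_{i+1} = \theta_{i,i+1}$. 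The two outer terms are the base angles, at the vertex $A_{i,i+1}$, of the isosceles triangles $O_iA_{i-1,i}A_{i,i+1}$ and $O_{i+1}A_{i,i+1}A_{i+1,i+2}$ (isosceles since two of their sides are radii).

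The crux is a one-line sub-lemma: for an isosceles triangle $OAB$ with $OA = OB$, the oriented base angles satisfy $\angle ABO = \angle OAB$ (immediate from the reflection in the perpendicular bisector of $AB$, or from a two-line coordinate check, valid for either position of $O$). Applying this to each $\calC_j$, the base angle contributed at one endpoint of its chord equals the base angle contributed at the other endpoint. Hence, summing the split vertex angles over the two pairs of opposite vertices, every base angle occurs once in $\angle P_1 + \angle P_3$ and once in $\angle P_2 + \angle P_4$ with the same value, so these base-angle contributions cancel upon subtraction:
\[
(\angle P_1 + \angle P_3) - (\angle P_2 + \angle P_4) \;=\; (\theta_{1,2} + \theta_{3,4}) - (\theta_{2,3} + \theta_{4,1}).
\]
Combining this with $\angle P_1 + \angle P_2 + \angle P_3 + \angle P_4 = 2\pi$ rewrites the left-hand side as $2(\angle P_1 + \angle P_3) - 2\pi$. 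Since $P_1P_2P_3P_4$ is concyclic precisely when opposite angles are supplementary, i.e. $\angle P_1 + \angle P_3 = \pi$, this vanishes exactly when $\theta_{1,2} + \theta_{3,4} = \theta_{2,3} + \theta_{4,1}$, giving the first equivalence. The equivalence with the concyclicity of $B_{1,2},B_{2,3},B_{3,4},B_{4,1}$ is then nothing but Miquel's theorem; alternatively one can rerun the chase at the $B$ points, noting that the kite $O_iA_{i,i+1}O_{i+1}B_{i,i+1}$ forces the intersection angle at $B_{i,i+1}$ to be $-\theta_{i,i+1}$, so the analogous condition is the negative of the one above and hence equivalent.

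I expect the only genuine care to lie not in the algebra, which is light, but in the oriented-angle bookkeeping of the last step: justifying uniformly over configurations that concyclicity is captured by $\angle P_1 + \angle P_3 = \pi$ together with the angle-sum $2\pi$ (handling non-convex or reflex arrangements of the four points, and the degenerate cases where three of them become collinear), and confirming that the chosen projection point keeps all relevant quadruples noncollinear. Making the sub-lemma and the vertex split robust to the orientation of each triangle is where I would concentrate the attention; the remainder is forced.
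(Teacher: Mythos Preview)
Your argument is correct and is essentially the paper's own proof: the paper phrases the same cancellation as ``a chord of a circle makes equal angles with the circle at both endpoints'' (equivalent to your isosceles base-angle sub-lemma, since the tangent is perpendicular to the radius), concludes that the alternating sum of the $\theta_{i,i+1}$ equals the alternating sum of the rectilinear vertex angles of $A_{1,2}A_{2,3}A_{3,4}A_{4,1}$, and then invokes the well-known fact that this vanishes iff the four points are concyclic. Your write-up is more explicit about the vertex-angle decomposition and the oriented-angle caveats, but the route is the same.
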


\begin{proof}
We depict on Figure~\ref{fig:effectiveMiquel} the rectilinear quadrilateral $A_{1,2}A_{2,3}A_{3,4}A_{4,1}$ in dotted lines and the curvilinear quadrilateral $A_{1,2}A_{2,3}A_{3,4}A_{4,1}$ in solid lines.

\begin{figure}[htpb]
\centering
\includegraphics[height=1.5in]{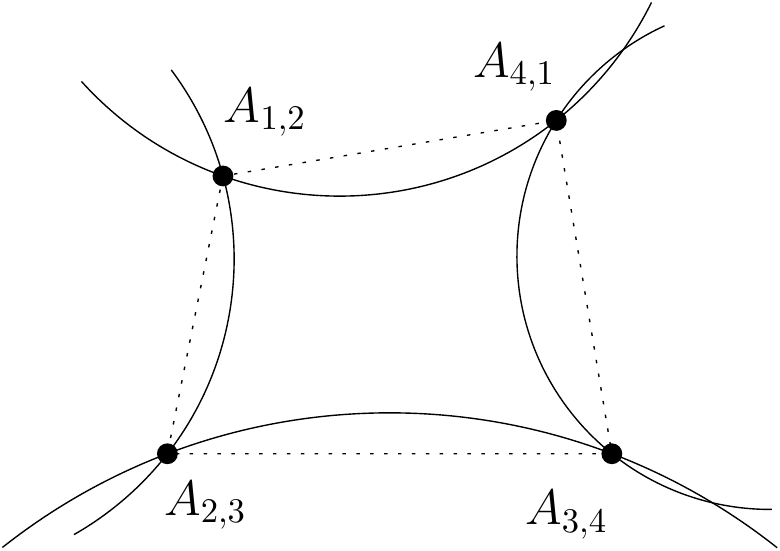}
\caption{The rectilinear quadrilateral and the curvilinear quadrilateral $A_{1,2}A_{2,3}A_{3,4}A_{4,1}$.}
\label{fig:effectiveMiquel}
\end{figure}

Since a chord of a circle makes equal angles with the circle at both endpoints, it follows that the alternating sum $\theta_{1,2}-\theta_{2,3}+\theta_{3,4}-\theta_{4,1}$ of the angles of the curvilinear quadrilateral equals the alternating sum of the angles of the rectilinear quadrilateral. It is well-known that the latter sum is zero if and only if the points $A_{1,2},A_{2,3},A_{3,4}$ and $A_{4,1}$ are concyclic, which concludes the proof.
\end{proof}

Proposition~\ref{prop:effective} provides an effective version of Miquel's theorem. To the best of our knowledge, this characterization has not appeared in the literature yet.

We use Miquel's theorem to define a dynamical system on the space of square grid circle patterns, which we now define and discuss.

\section{Circle patterns and Miquel dynamics}
\label{sec:patterns}

\begin{definition}
A \emph{square grid circle pattern} (SGCP) is a map $S:\Z^2\rightarrow\R^2$ such that the following two conditions are verified :
\begin{enumerate}
\item for any $(x,y)\in\Z^2$, the points $S(x,y),S(x+1,y),S(x+1,y+1)$ and $S(x,y+1)$ are pairwise distinct and concyclic, with their circumcenter denoted by $O_{x+\tfrac{1}{2},y+\tfrac{1}{2}}^S$ ;
\item for any $(i,j)\in\left(\Z+\frac{1}{2}\right)^2$, the circumcenter $O_{i,j}^S$ is distinct from its neighboring circumcenters $O_{i+1,j}^S$ and $O_{i,j+1}^S$.
\end{enumerate}
\end{definition}

In other words, it is a drawing of the square grid such that each face admits a circumcircle. The condition of the four points being pairwise distinct implies in particular that edges cannot collapse to a single point. The second condition implies that dual edges, connecting two neighboring circumcenters, do not collapse to a point. It also implies that for any $(i,j)\in\left(\Z+\frac{1}{2}\right)^2$, the circumcenter $O_{i,j}^S$ is distinct from its diagonally neighboring circumcenters $O_{i+1,j+1}^S$ and $O_{i+1,j-1}^S$. Indeed, if two diagonally opposite circumcircles containing a given point $S(x,y)$ were equal, then all four circumcircles containing that point would be equal. Folds (see Figure~\ref{fig:fold}) and non-convex quadrilaterals (see Figure~\ref{fig:nonconvex}) are allowed.

\begin{figure}[htbp]
\centering
\subfloat[Fold along the edge connecting $(0,1)$ and $(1,1)$.]{\label{fig:fold}\includegraphics[height=2in]{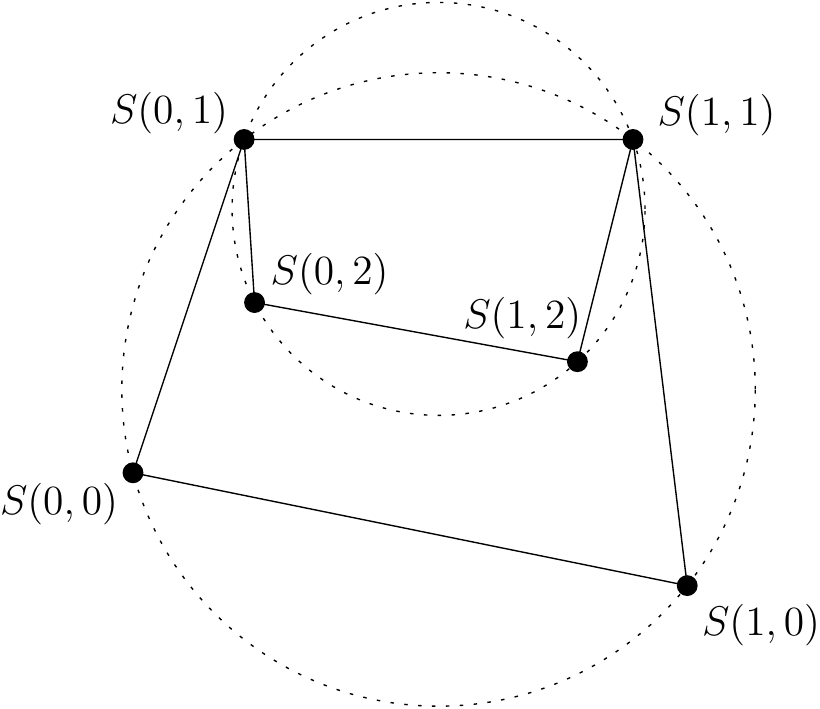}}
\hspace{\stretch{1}}
\subfloat[Face $F_{\tfrac{1}{2},\tfrac{1}{2}}$ is mapped by $S$ to a non-convex quadrilateral.]{\label{fig:nonconvex}\includegraphics[height=1.5in]{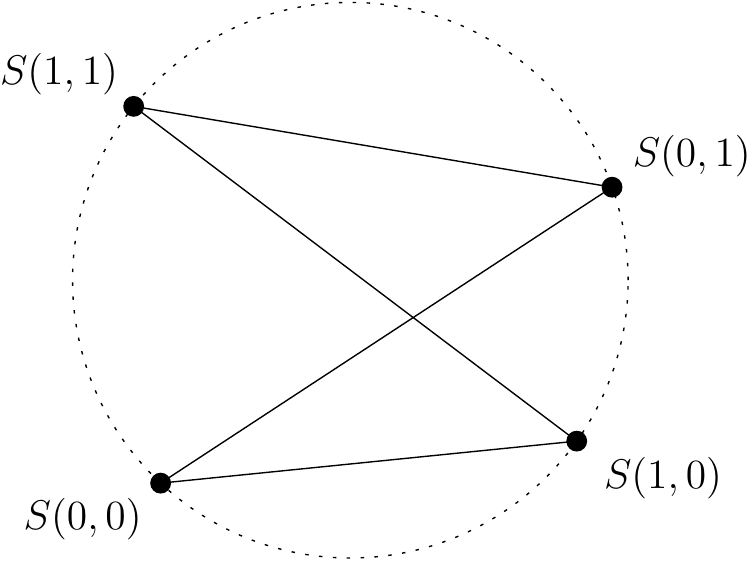}}
\caption{Two allowed local configurations.}
\label{fig:allowed}
\end{figure}

We write $\calS$ for the set of all square grid circle patterns. For any $(i,j)\in\left(\Z+\tfrac{1}{2}\right)^2$, we denote by $F_{i,j}$ the face of $\Z^2$ with vertices $\left(i\pm\tfrac{1}{2},j\pm\tfrac{1}{2}\right)$. Each face of $\Z^2$ is colored black or white in a checkerboard fashion : face $F_{i,j}$ is colored black (resp. white) if $j-i$ is even (resp. odd). If $S\in\calS$, we write $\calC_{i,j}^S$ for the circumcircle of the face $S\left(F_{i,j}\right)$. The circle $\calC_{i,j}^S$ is called black (resp. white) if the face $F_{i,j}$ is black (resp. white). See Figure~\ref{fig:SGCP} for an example.

\begin{figure}[htbp]
\centering
\subfloat{\includegraphics[height=3in]{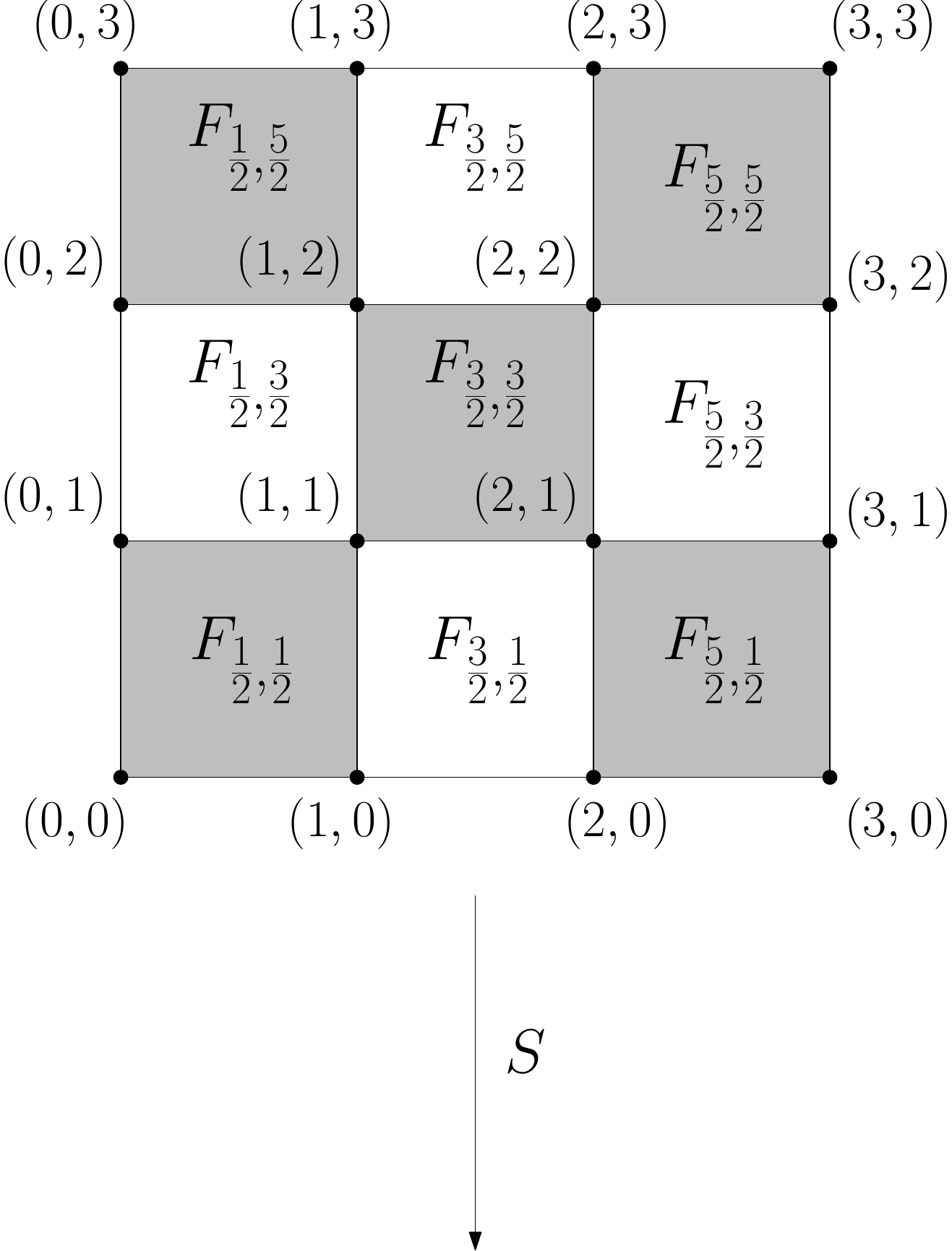}}

\subfloat{\includegraphics[height=1.5in]{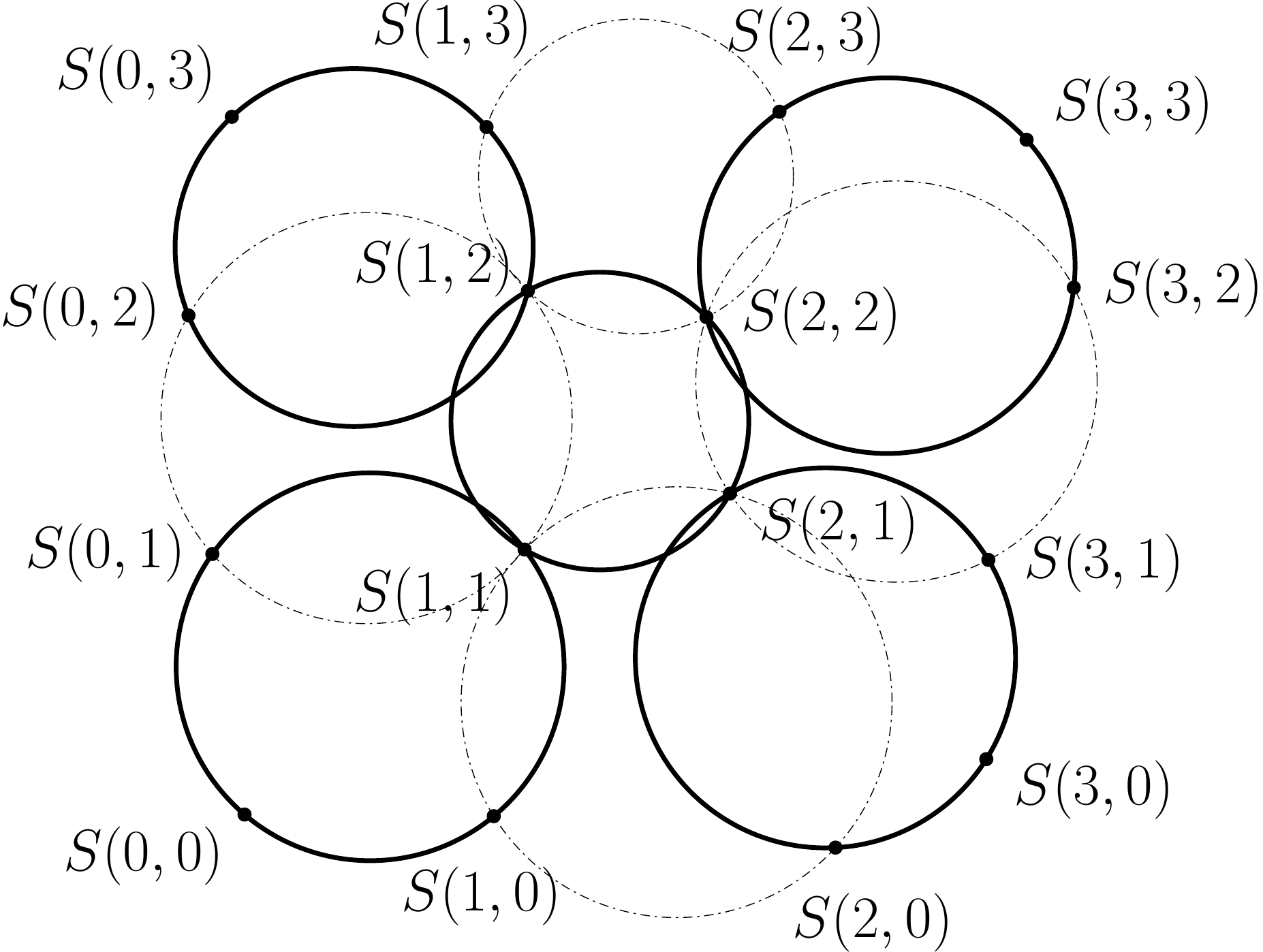}}
\caption{A portion of a square grid circle pattern. Black (resp. white) circles are represented with a bold (resp. dash-dotted) stroke.}
\label{fig:SGCP}
\end{figure}

In order to reduce the problem from an infinite-dimensional system to a finite-dimensional one, we are led to consider spatially biperiodic SGCPs. Let $\vec{a}=(a_1,a_2)$ be a vector in $\Z^2$. We say that an SGCP $S$ is \emph{$\vec{a}$-periodic} if
\[
\exists \vec{u}\in\R^2 \ \forall (x,y)\in\Z^2, S(x+a_1,y+a_2)=S(x,y)+\vec{u}.
\]

Such a vector $\vec{u}$ is called the monodromy in the direction $\vec{a}$. Given two non-collinear integer vectors $\vec{a}$ and $\vec{b}$, we denote by $\calS_{\vec{a},\vec{b}}$ the set of all SGCPs that are both $\vec{a}$-periodic and $\vec{b}$-periodic. It is not hard to see that any $\calS_{\vec{a},\vec{b}}$ is equal to some $\calS_{\vec{a'},\vec{b'}}$, with $\vec{a'}=(m,0)$ and $\vec{b'}=(s,n)$, where $m$ and $n$ are positive integers and $s$ is an integer such that $0\leq s <m$. We will then denote by $\calS_{m,n}^s$ the set $\calS_{(m,0),(s,n)}$. Elements of $\calS_{m,n}^s$ can alternatively be seen as circle patterns drawn on a flat torus. The fundamental domain is an $m$ by $n$ square grid and fundamental domains are glued together with a horizontal shift of $s$ to tile the plane (see Figure~\ref{fig:fundamentaldomains}). In order for the spatial periodicity to be compatible with the checkerboard coloring of the plane (we want a well-defined checkerboard coloring of the square grid on the torus), we will restrict ourselves to the cases when $m$ is even and $s$ has the same parity as $n$.

\begin{figure}[htpb]
\centering
\includegraphics[height=2in]{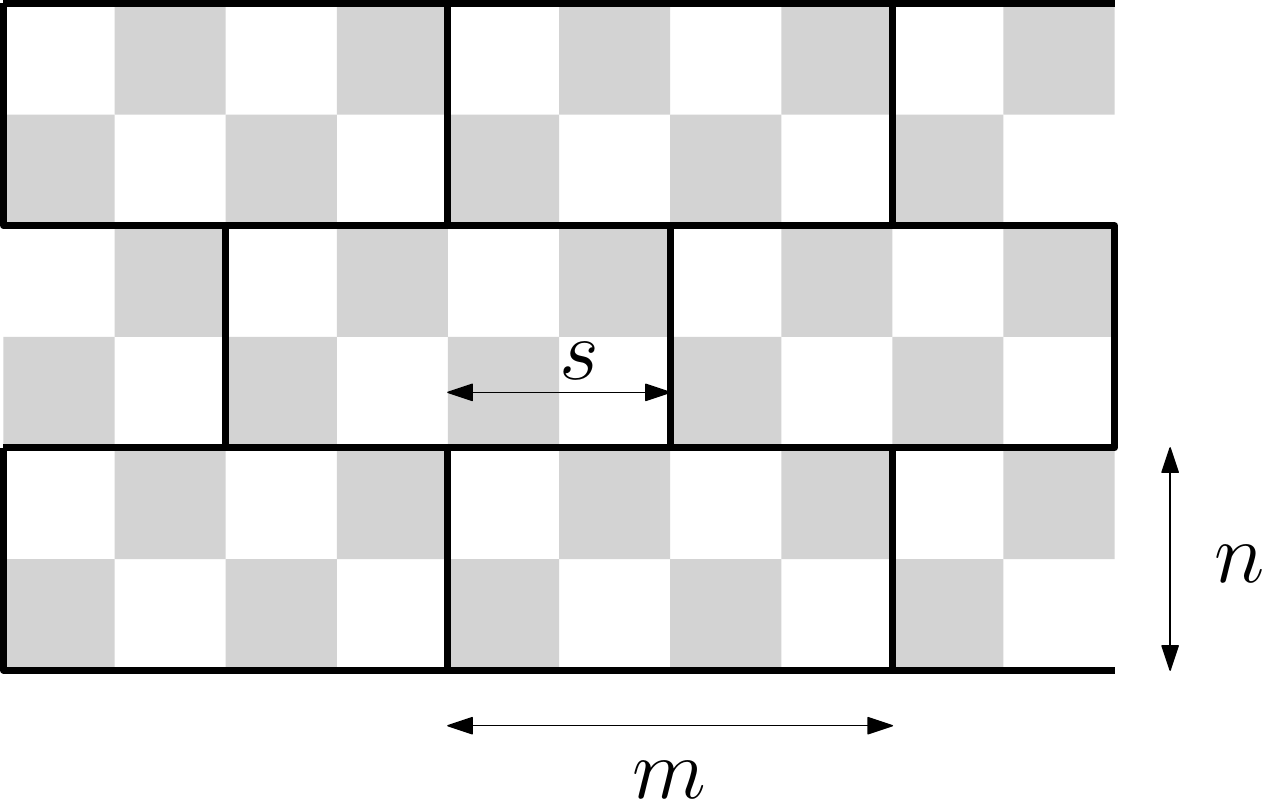}
\caption{A tiling of the plane by fundamental domains (in bold) in the case $(m,n,s)=(4,2,2)$.}
\label{fig:fundamentaldomains}
\end{figure}

We now describe two maps $\mu_B$ and $\mu_W$ from $\calS$ to itself, the definition of which relies on Miquel's theorem. Given $S\in\calS$ we construct $S'=\mu_B(S)$ as follows. If $(x,y)\in\Z^2$ such that $x-y$ is even (resp. odd), we set $S'(x,y)$ to be the symmetric of $S(x,y)$ with respect to the line going through $O_{x-\tfrac{1}{2},y+\tfrac{1}{2}}^S$ and $O_{x+\tfrac{1}{2},y-\tfrac{1}{2}}^S$ (resp. $O_{x+\tfrac{1}{2},y+\tfrac{1}{2}}^S$ and $O_{x-\tfrac{1}{2},y-\tfrac{1}{2}}^S$). In other words, each vertex of $S$ gets moved to the other intersection point of the two white circles it belongs to.

We now check that, for generic $S$, $S'$ is an SGCP. Fix $(i,j)\in\left(\Z+\tfrac{1}{2}\right)^2$. If $i-j$ is odd, then the points $S'\left(i\pm\tfrac{1}{2},j\pm\tfrac{1}{2}\right)$ are on the circle $\calC_{i,j}^S$, thus concyclic (white circles don't change under $\mu_B$). If $i-j$ is even, we apply Miquel's theorem, with the circles $\calC_{i\pm1,j}^S$ and $\calC_{i,j\pm1}^S$ playing the roles of $\calC_k$, and the points $S\left(i\pm\tfrac{1}{2},j\pm\tfrac{1}{2}\right)$ (resp. $S'\left(i\pm\tfrac{1}{2},j\pm\tfrac{1}{2}\right)$) playing the roles of the $A_{k,k+1}$ (resp. $B_{k,k+1}$). The points $S\left(i\pm\tfrac{1}{2},j\pm\tfrac{1}{2}\right)$ are concyclic because they lie on the circle $\calC_{i,j}^S$, thus the points $S'\left(i\pm\tfrac{1}{2},j\pm\tfrac{1}{2}\right)$ are concyclic\footnote{Strictly speaking, the points $S'\left(i\pm\tfrac{1}{2},j\pm\tfrac{1}{2}\right)$ are concyclic on the Riemann sphere and could be aligned in the plane. This however does not occur for generic $S$ and can even be ruled out in every case by applying a M\"obius transformation.} and lie on some circle $\calC_{i,j}^{S'}$. For generic $S$, points around a face of $S'$ and neighboring circumcenters of $S'$ are distinct, so $S'$ is an SGCP with the same white circles as $S$ (only the black circles may have changed).

We similarly define $\mu_W(S)$ by moving each vertex $S(i,j)$ to the other intersection point of the two black circles it belongs to (when these two circles are tangent, the vertex does not move).

The maps $\mu_B$ and $\mu_W$ are respectively called \emph{black mutation} and \emph{white mutation}. Clearly, they are both involutions.

Miquel dynamics is defined by composing $\mu_B$ and $\mu_W$ in an alternating fashion.  Fix $S_0$ to be the initial SGCP. We define the bi-infinite sequence $\left(S_t\right)_{t\in\Z}$ by :
\begin{itemize}
\item $S_{2t}=\left(\mu_W\circ\mu_B\right)^t(S_0)$ and $S_{2t+1}=\mu_B\circ\left(\mu_W\circ\mu_B\right)^t(S_0)$ if $t$ is a positive integer ;
\item  $S_{2t}=\left(\mu_B\circ\mu_W\right)^{-t}(S_0)$ and $S_{2t+1}=\mu_W\circ\left(\mu_B\circ\mu_W\right)^{-t}(S_0)$ if $t$ is a negative integer.
\end{itemize}

Here, $\left(\mu_W\circ\mu_B\right)^t$ denotes the composition of the map $\mu_W\circ\mu_B$ with itself $t$ times.

If an SGCP $S$ is $\vec{a}$-periodic for some $\vec{a}\in\Z^2$ then $\mu_B(S)$ and $\mu_W(S)$ are also $\vec{a}$-periodic, thus Miquel dynamics is also a well-defined dynamics on $\calS_{m,n}^s$, the set of $m$ by $n$ circle patterns on a torus with a shift of $s$. This is now a dynamical system on a finite-dimensional space.

We say that two SGCPs $S$ and $S'$ are equivalent and denote it by $S\sim S'$ if there exists a similarity $\sigma$ (composition of a rotation, a homothety and a translation) such that $S'=\sigma\circ S$. We define the space of circle patterns up to similarity $\calM_{m,n}^s:=\calS_{m,n}^s/\sim$. The maps $\mu_B$ and $\mu_W$ commute with similarities, thus Miquel dynamics descends to the space $\calM_{m,n}^s$.

\section{Coordinates for biperiodic circle patterns}
\label{sec:coordinates}

In this section we define some coordinates on the space $\calM_{m,n}^s$, which will be some angles $\phi$ satisfying certain relations.

Define $\Gamma=\left(\Z+\frac{1}{2}\right)^2$ and $\calD=\left\{N,W,S,E\right\}$. If $\phi$ is a map from $\Gamma\times\calD$ to $\left(0,2\pi\right)$ and if $\left(i,j,D\right)\in\Gamma\times\calD$, we simply denote $\phi\left(i,j,D\right)$ by $\phi_{i,j}^D$. Each face of $\Z^2$ is divided into four triangular regions (north, west, south and east) and elements in $\Gamma\times\calD$ are in bijection with these triangular regions (see Figure~\ref{fig:phiregions}).

\begin{figure}[htpb]
\centering
\includegraphics[height=2in]{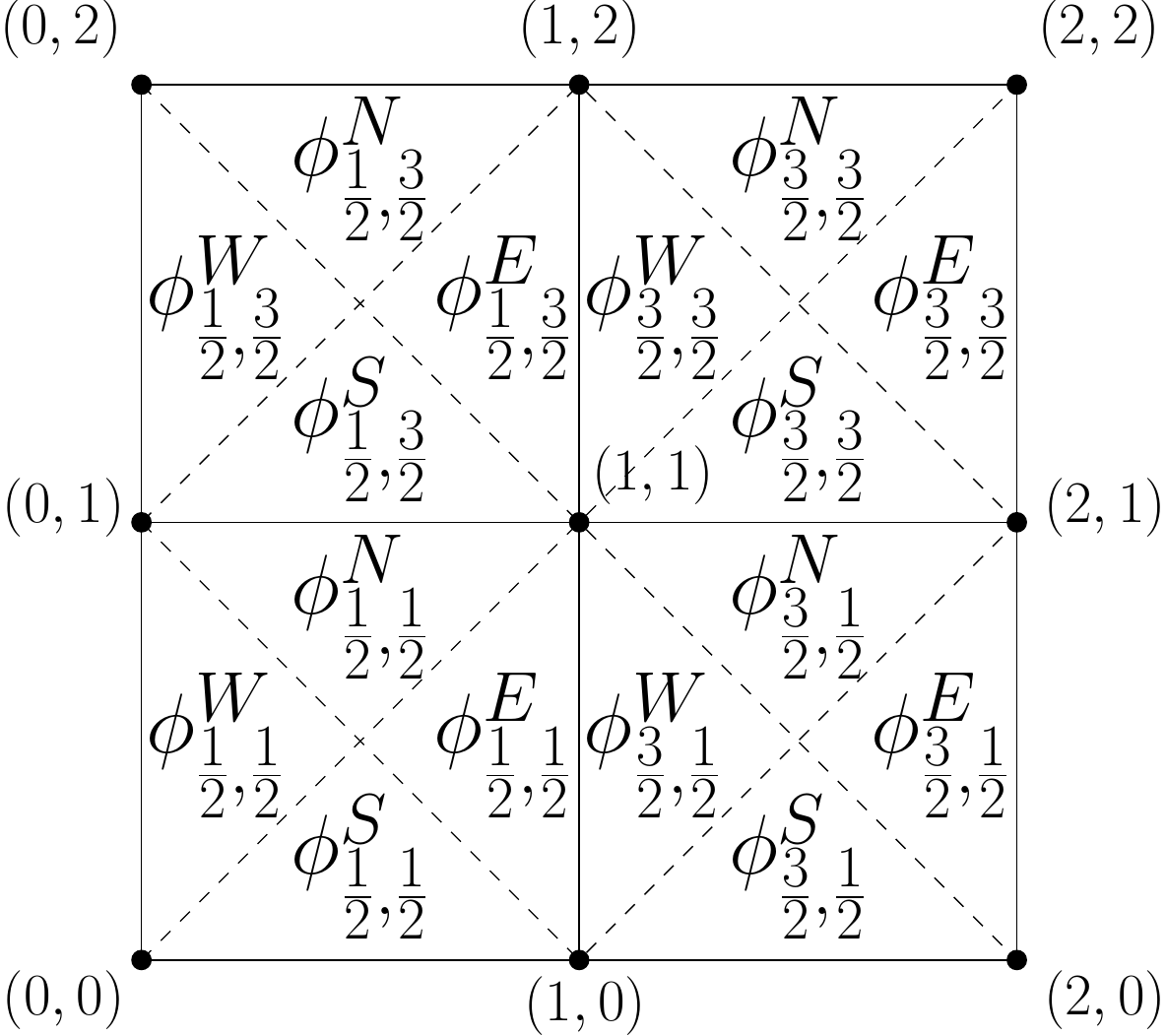}
\caption{Labeling of the triangular regions of a fundamental domain by $\phi$ variables in the case when $m=n=2$.}
\label{fig:phiregions}
\end{figure}

Given a circle pattern $S\in\calS_{m,n}^s$, we define for any $(i,j,D)\in\Gamma\times\calD$ the angle $\phi_{i,j}^D\in\left(0,2\pi\right)$ by the following formulas :
\begin{align}
\phi_{i,j}^N&= \angle S\left(i+\frac{1}{2},j+\frac{1}{2}\right)O_{i,j}S\left(i-\frac{1}{2},j+\frac{1}{2}\right) \\
\phi_{i,j}^W&= \angle S\left(i-\frac{1}{2},j+\frac{1}{2}\right)O_{i,j}S\left(i-\frac{1}{2},j-\frac{1}{2}\right) \\
\phi_{i,j}^S&= \angle S\left(i-\frac{1}{2},j-\frac{1}{2}\right)O_{i,j}S\left(i+\frac{1}{2},j-\frac{1}{2}\right) \\
\phi_{i,j}^E&= \angle S\left(i+\frac{1}{2},j-\frac{1}{2}\right)O_{i,j}S\left(i+\frac{1}{2},j+\frac{1}{2}\right).
\end{align}
They are the angles under which we see an edge of a face (the northern, western, southern or eastern boundary of the face) when we are standing at the circumcenter of the face (see Figure~\ref{fig:phidefinition}).

\begin{figure}[htpb]
\centering
\includegraphics[height=2in]{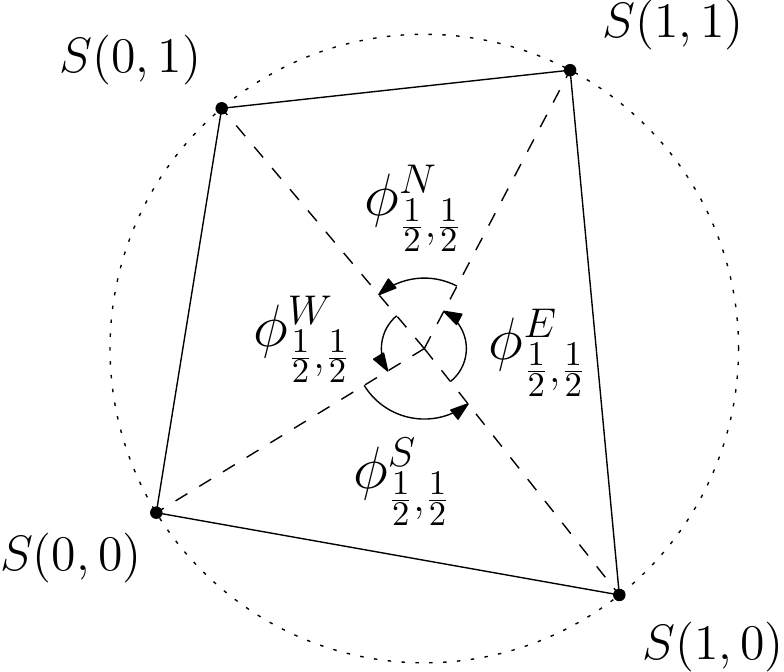}
\caption{The $\phi$ angles are the angles under which we see an edge while standing at the circumcenter of a face.}
\label{fig:phidefinition}
\end{figure}

These angles are left unchanged if a similarity is applied to $S$, thus this provides a well-defined map $\Phi_{m,n}^s$ from $\calM_{m,n}^s$ to $\left(0,2\pi\right)^{\Gamma\times\calD}$. In order to use the $\phi$ angles to coordinatize the space $\calM_{m,n}^s$, we need to show that $\Phi_{m,n}^s$ realizes a bijection from $\calM_{m,n}^s$ onto its image, which will be a certain subset $\calA_{m,n}^s$ of $\left(0,2\pi\right)^{\Gamma\times\calD}$. Before we state the equations cutting out that subset, we need some additional notation.

For any $(x,y)\in\Z^2$, we define the following sets of triangles :
\begin{align}
T_b(x,y)&:=\bigg\{\left(x+\frac{1}{2},y+\frac{1}{2},W\right),\left(x-\frac{1}{2},y+\frac{1}{2},S\right), \nonumber \\
&\ \ \ \ \ \ \ \left(x-\frac{1}{2},y-\frac{1}{2},E\right),\left(x+\frac{1}{2},y-\frac{1}{2},N\right)\bigg\};\\
T_w(x,y)&:=\bigg\{\left(x+\frac{1}{2},y+\frac{1}{2},S\right),\left(x-\frac{1}{2},y+\frac{1}{2},E\right), \nonumber \\
&\ \ \ \ \ \ \ \left(x-\frac{1}{2},y-\frac{1}{2},N\right),\left(x+\frac{1}{2},y-\frac{1}{2},W\right)\bigg\};\\
T(x,y)&:=T_b(x,y) \cup T_w(x,y).
\end{align}
The set $T(x,y)$ denotes the eight triangles surrounding vertex $(x,y)$, while $T_b(x,y)$ (resp. $T_w(x,y)$) denotes the four black (resp. white) triangles surrounding that vertex, where the triangles are colored in an alternating fashion as in Figure~\ref{fig:vertexflower}.

\begin{figure}[htpb]
\centering
\includegraphics[height=2in]{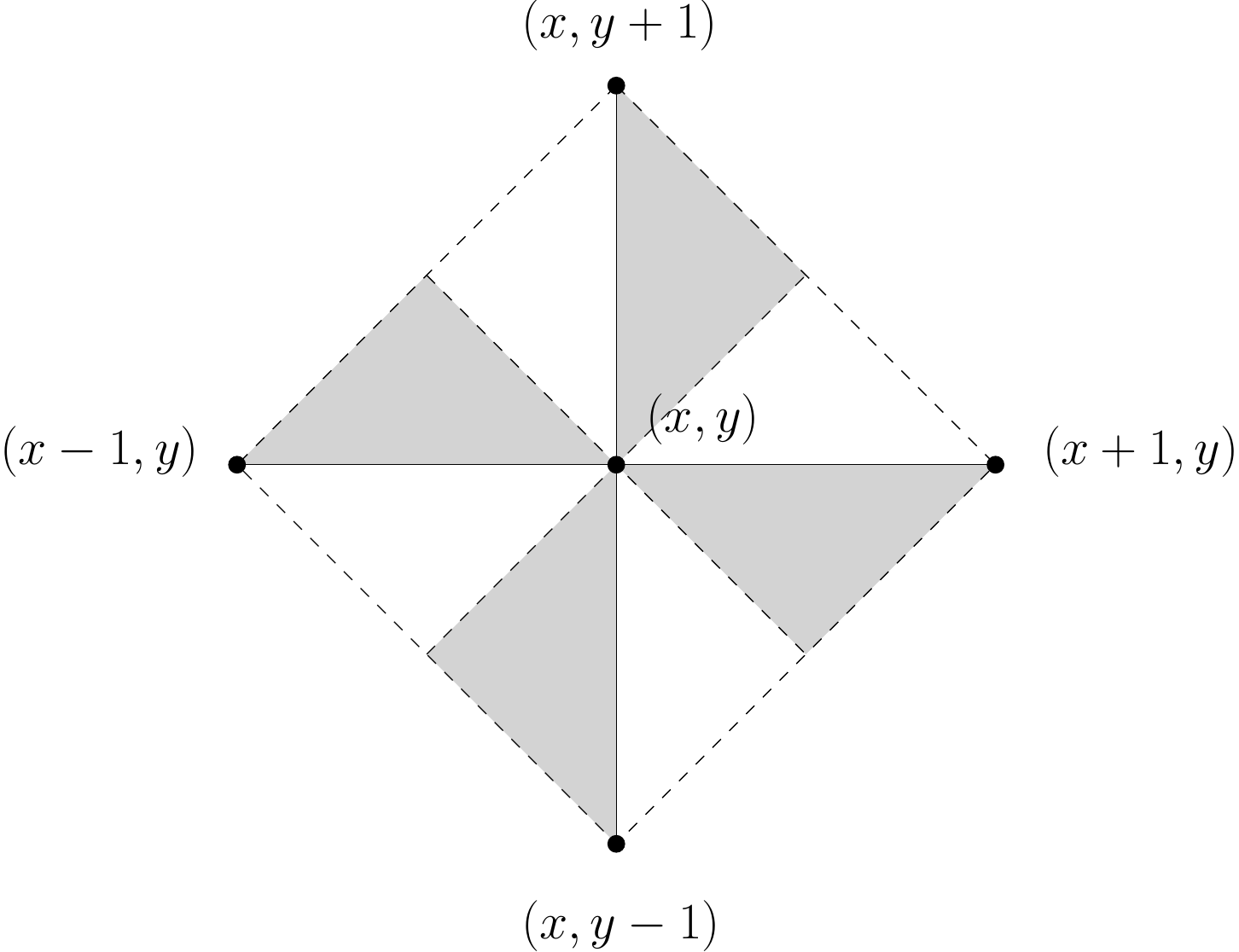}
\caption{The black and white triangles around vertex $(x,y)$.}
\label{fig:vertexflower}
\end{figure}

We now define what will be the image of the map $\Phi_{m,n}^s$ and for simplicity, we begin with the case when the shift $s=0$ (which implies that $n$ is even).

\begin{definition}
We define $\calA_{m,n}^0$ to be the set of all $\phi\in\left(0,2\pi\right)^{\Gamma\times\calD}$ verifying the following relations :
\begin{gather}
\forall \left(i,j\right)\in\Gamma, \forall D\in\calD,\ \phi_{i,j}^D=\phi_{i+m,j}^D=\phi_{i,j+n}^D;\label{eq:biperiodicity}\\
\forall \left(i,j\right)\in\Gamma, \ \phi_{i,j}^N+\phi_{i,j}^W \not\equiv0 \mod 2\pi;\label{eq:diag1}\\
\forall \left(i,j\right)\in\Gamma, \ \phi_{i,j}^N+\phi_{i,j}^E\not\equiv0 \mod 2\pi;\label{eq:diag2}\\
\forall \left(i,j\right)\in\Gamma, \ \sum_{D\in\calD}\phi_{i,j}^D \equiv0 \mod 2\pi; \label{eq:faceflat}\\
\forall \left(i,j\right)\in\Gamma, \ \sum_{\tau\in T\left(i+\frac{1}{2},j+\frac{1}{2}\right)}\phi(\tau) \equiv0 \mod 4\pi; \label{eq:vertexflat}\\
\forall \left(i,j\right)\in\Gamma, \ \prod_{\tau\in T_b\left(i+\frac{1}{2},j+\frac{1}{2}\right)}\sin\frac{\phi(\tau)}{2}=\prod_{\tau\in T_w\left(i+\frac{1}{2},j+\frac{1}{2}\right)}\sin\frac{\phi(\tau)}{2}; \label{eq:vertexmonodromy} \\
\forall i\in\left(\Z+\frac{1}{2}\right)\cap\left(0,m\right), \ \prod_{\substack{j\in\Z+\frac{1}{2}\\0<j<n}}\sin\frac{\phi_{i,j}^N}{2}=\prod_{\substack{j\in\Z+\frac{1}{2}\\0<j<n}}\sin\frac{\phi_{i,j}^S}{2};\label{eq:verticalmonodromy}\\
\forall j\in\left(\Z+\frac{1}{2}\right)\cap\left(0,n\right), \ \prod_{\substack{i\in\Z+\frac{1}{2}\\0<i<m}}\sin\frac{\phi_{i,j}^W}{2}=\prod_{\substack{i\in\Z+\frac{1}{2}\\0<i<m}}\sin\frac{\phi_{i,j}^E}{2}; \label{eq:horizontalmonodromy} \\
\forall i\in\left(\Z+\frac{1}{2}\right)\cap\left(0,m\right), \ \sum_{\substack{j\in\Z+\frac{1}{2}\\0<j<n}}\phi_{i,j}^W\equiv\sum_{\substack{j\in\Z+\frac{1}{2}\\0<j<n}}\phi_{i,j}^E \mod 2\pi; \label{eq:verticalparallel}\\
\forall j\in\left(\Z+\frac{1}{2}\right)\cap\left(0,n\right), \ \sum_{\substack{i\in\Z+\frac{1}{2}\\0<i<m}}\phi_{i,j}^N\equiv\sum_{\substack{i\in\Z+\frac{1}{2}\\0<i<m}}\phi_{i,j}^S \mod 2\pi. \label{eq:horizontalparallel}
\end{gather}
\end{definition}
Because of the biperiodicity assumption~\eqref{eq:biperiodicity}, there are only finitely many degrees of freedom in $\calA_{m,n}^0$.
We can now state the following coordinatization result for $\calM_{m,n}^0$ :
\begin{theorem}
\label{thm:coordinatization}
$\Phi_{m,n}^0$ is a bijection from $\calM_{m,n}^0$ to $\calA_{m,n}^0$.
\end{theorem}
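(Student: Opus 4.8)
The plan is to prove separately that $\Phi_{m,n}^0$ maps $\calM_{m,n}^0$ into $\calA_{m,n}^0$, that it is injective, and that it is surjective, organizing everything around a single principle: a biperiodic SGCP can be reconstructed face by face from the angles $\phi$, and the relations defining $\calA_{m,n}^0$ are exactly the conditions making this reconstruction close up consistently. The guiding observation is that the obstruction to closing up around any loop is a planar similarity (the monodromy of the reconstruction), whose modulus is governed by a product of $\sin(\phi/2)$ and whose argument is governed by a signed sum of the $\phi$. Thus every relation plays one of two roles: a multiplicative one killing the modulus, or an additive one killing the argument.

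First I would check the inclusion $\Phi_{m,n}^0(\calM_{m,n}^0)\subseteq\calA_{m,n}^0$ by reading off the geometric meaning of each relation for an actual pattern $S$. Relation \eqref{eq:biperiodicity} is the $(m,0)$- and $(0,n)$-periodicity. Placing the four concyclic vertices of a face at angular positions $\alpha,\alpha+\phi^N,\alpha+\phi^N+\phi^W,\alpha+\phi^N+\phi^W+\phi^S$ on the circumcircle shows that \eqref{eq:faceflat} says the four central angles fill one full turn, and that the two diagonals of the face are nondegenerate precisely when $\phi^N+\phi^W\not\equiv0$ and (using \eqref{eq:faceflat}) $\phi^N+\phi^E\not\equiv0$, i.e. \eqref{eq:diag1}--\eqref{eq:diag2}; adjacent vertices are automatically distinct since $\phi\in(0,2\pi)$. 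Relation \eqref{eq:vertexflat} is planarity at an interior vertex: by the inscribed angle theorem the interior angle of a face at a corner is half the sum of the central angles of the two edges not incident to that corner, so requiring the four interior angles around a vertex to sum to $2\pi$ and substituting \eqref{eq:faceflat} yields the sum of the eight surrounding $\phi$ to be $4\pi$. The three sine-product relations \eqref{eq:vertexmonodromy}, \eqref{eq:verticalmonodromy}, \eqref{eq:horizontalmonodromy} are law-of-sines consistency conditions: two faces sharing an edge of length $\ell$ with circumradii $R,R'$ satisfy $\ell=2R\sin(\phi/2)=2R'\sin(\phi'/2)$, and imposing that the product of the ratios $R'/R$ around a vertex star, a vertical torus loop, or a horizontal torus loop equals $1$ gives the stated equalities. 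Finally \eqref{eq:verticalparallel}--\eqref{eq:horizontalparallel} state that the net turning along each homology loop vanishes, i.e. the monodromy in each direction is a pure translation.

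For injectivity I would invoke rigidity of the reconstruction. Normalizing by a similarity amounts to fixing one circumcenter, its radius, one incident vertex, and the handedness; the oriented angles $\phi^D_{i,j}$ then determine the four vertices of that face, and crossing any shared edge the two common vertices together with the neighbour's central angle determine the neighbour's circumcenter, radius and remaining vertices. Since the dual graph of the grid on the torus is connected, two patterns with identical $\phi$ differ only by the chosen initial similarity, so $\Phi_{m,n}^0$ is injective on $\calM_{m,n}^0$.

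Surjectivity is the crux and where I expect the real work. Given $\phi\in\calA_{m,n}^0$, I would fix an initial face and propagate the face-by-face construction along a spanning tree of the dual graph, obtaining a candidate map on the universal cover. It then remains to show path-independence, i.e. closure around a generating set of loops, namely the elementary cycles around vertices together with the two homology generators of the torus. For a vertex loop the monodromy is a similarity fixing the common vertex, hence a rotation-scaling about it; \eqref{eq:vertexmonodromy} forces its modulus to be $1$ and \eqref{eq:vertexflat} together with \eqref{eq:faceflat} forces its argument to be $0$, so it is the identity and the map descends to a single-valued SGCP on $\Z^2$. For each homology generator the monodromy is a general similarity whose modulus is trivialized by \eqref{eq:verticalmonodromy} or \eqref{eq:horizontalmonodromy} and whose argument is trivialized by \eqref{eq:verticalparallel} or \eqref{eq:horizontalparallel}, leaving a pure translation and hence the required biperiodicity. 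The nondegeneracy relations \eqref{eq:diag1}--\eqref{eq:diag2}, together with the convention of placing each new circumcenter on the far side of the shared edge, guarantee that the reconstructed quadrilaterals are nondegenerate and that neighbouring circumcenters are distinct, so the output is a genuine element of $\calS_{m,n}^0$ with $\Phi_{m,n}^0$-image $\phi$. The main obstacle is the bookkeeping in this last step: correctly tracking signs and the black/white alternation that sorts the eight triangles of a vertex star into the two products of \eqref{eq:vertexmonodromy}, and verifying that for each loop the closure condition decouples cleanly into the separate modulus (product of sines) and argument (sum of angles) equations listed.
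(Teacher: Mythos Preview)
Your proposal is correct and follows essentially the same route as the paper: verify each relation as a geometric necessity, then reconstruct the pattern face by face from the $\phi$ data and argue that the listed relations are exactly the closure conditions around vertices and along the two torus loops. Your framing of each closure obstruction as the modulus and argument of a monodromy similarity is just a repackaging of the paper's direct check that $\overrightarrow{\hat S(x,0)\hat S(x+1,0)}$ and $\overrightarrow{\hat S(x,n)\hat S(x+1,n)}$ agree in direction (via~\eqref{eq:verticalparallel}) and in norm (via~\eqref{eq:verticalmonodromy}). One small slip: there is no ``convention of placing each new circumcenter on the far side of the shared edge''---the oriented central angle $\phi\in(0,2\pi)$ subtended by the shared edge determines the new circumcenter uniquely, with no choice involved---and relations~\eqref{eq:diag1}--\eqref{eq:diag2} only ensure that diagonally opposite vertices \emph{within a face} are distinct, so they do not by themselves guarantee that neighbouring circumcenters differ (the paper's proof is equally silent on this point, so this is not a divergence from its argument).
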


\begin{proof}
Let us first show that conditions~\eqref{eq:biperiodicity} to~\eqref{eq:horizontalparallel} are necessary. The first condition~\eqref{eq:biperiodicity} follows from the spatial biperiodicity of circle patterns in $\calS_{m,n}^0$. The next five conditions are local conditions satisfied by any circle pattern in the plane, while the last four are a consequence of the spatial biperiodicity. Let $S$ be a circle pattern in $\calS_{m,n}^0$, denote by $\left[S\right]$ its class in $\calM_{m,n}^0$ and write $\phi=\Phi_{m,n}^0(\left[S\right])$. Fix $(i,j)\in\Gamma$. The relations~\eqref{eq:diag1} and~\eqref{eq:diag2} are a consequence of the vertices of $S\left(F_{i,j}\right)$ being pairwise distinct. Equation~\eqref{eq:faceflat} expresses flatness at $O_{i,j}^S$, the circumcenter of the face $S\left(F_{i,j}\right)$. Equation~\eqref{eq:vertexflat} expresses flatness at the vertex $S\left(i+\tfrac{1}{2},j+\tfrac{1}{2}\right)$ and uses the fact that the eight triangles around the vertex are isosceles. Equation~\eqref{eq:vertexmonodromy} accounts for the absence of monodromy around the vertex $S\left(i+\tfrac{1}{2},j+\tfrac{1}{2}\right)$. Indeed, if we denote by $R_{i,j}^S$ the radius of the circle $\calC_{i,j}^S$, then we have
\begin{align}
\frac{R_{i+1,j}^S}{R_{i,j}^S} &= \frac{\sin \frac{\phi_{i,j}^E}{2}}{\sin \frac{\phi_{i+1,j}^W}{2}} \label{eq:verticaledgeradius} \\
\frac{R_{i,j+1}^S}{R_{i,j}^S} &= \frac{\sin \frac{\phi_{i,j}^N}{2}}{\sin \frac{\phi_{i,j+1}^S}{2}}. \label{eq:horizontaledgeradius}
\end{align}
Note that because we chose the $\phi$ angles to take values in $\left(0,2\pi\right)$, the quantities $\sin\tfrac{\phi}{2}$ are always positive. Equation~\eqref{eq:vertexmonodromy} follows from rewriting
\[
\frac{R_{i,j}^S}{R_{i+1,j}^S}\frac{R_{i+1,j}^S}{R_{i+1,j+1}^S}\frac{R_{i+1,j+1}^S}{R_{i,j+1}^S}\frac{R_{i,j+1}^S}{R_{i,j}^S}=1.
\]
Equation~\eqref{eq:verticalmonodromy} expresses the absence of monodromy when going across the torus vertically (translations don't change the radii of the circles), and is obtained  by substituting equations of the type of~\eqref{eq:horizontaledgeradius} into
\[
\prod_{\substack{j\in\Z+\frac{1}{2}\\0<j<n}}\frac{R_{i,j+1}^S}{R_{i,j}^S}=1.
\]
Equation~\eqref{eq:horizontalmonodromy} follows similarly.
Equation~\eqref{eq:verticalparallel} is a consequence of the invariance by translation. Fix $i\in\left(\Z+\tfrac{1}{2}\right)$ and write $x=i-\tfrac{1}{2}$. By translation invariance, the angle between the vectors $\overrightarrow{S(x+1,0)S(x,0)}$ and $\overrightarrow{S(x,n)S(x+1,n)}$ equals $\pi$. Walking along the edges $\overrightarrow{S(x,y)S(x,y+1)}$ for $0 \leq y \leq n-1$ and counting the turning angles (see Figure~\ref{fig:turningangles}), we obtain that
\begin{multline}
\sum_{0\leq y \leq n-1} \pi - \angle S(x+1,y)S(x,y)S(x,y+1)- \angle S(x,y)S(x,y+1)S(x+1,y+1) \\
\equiv 0 \mod 2\pi.
\end{multline}
The conclusion follows from the observation that, using~\eqref{eq:faceflat}, one can write
\begin{multline}
\pi - \angle S(x+1,y)S(x,y)S(x,y+1)- \angle S(x,y)S(x,y+1)S(x+1,y+1) \\
=\frac{1}{2}\left(\phi_{x+\frac{1}{2},y+\frac{1}{2}}^W-\phi_{x+\frac{1}{2},y+\frac{1}{2}}^E\right).
\end{multline}
Equation~\eqref{eq:horizontalparallel} is proved similarly.

\begin{figure}[htpb]
\centering
\includegraphics[height=2in]{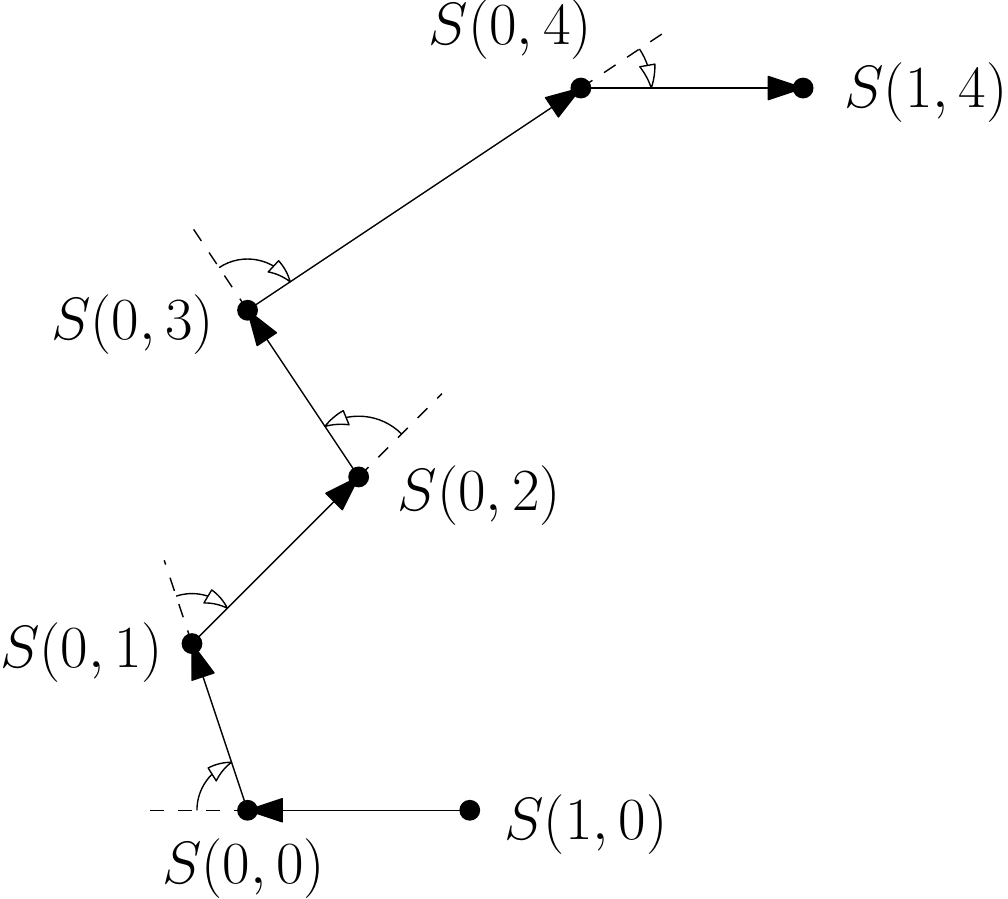}
\caption{Count of the turning angles for a walk crossing the torus vertically, in the case when $x=0$ and $n=4$.}
\label{fig:turningangles}
\end{figure}

Reciprocally, fix $\phi\in \calA_{m,n}^0$. We will construct the unique pattern $\hat{S}\in \calS_{m,n}^0$ such that $\Phi_{m,n}^0(\hat{S})=\phi$, $\hat{S}(0,0)=(0,0)$ and $\hat{S}(1,0)=(1,0)$. The class $[\hat{S}]$ will be the unique preimage of $\phi$ in $\calM_{m,n}^0$. We start with $\hat{S}(0,0)=(0,0)$ and $\hat{S}(1,0)=(1,0)$. Knowing $\phi_{\tfrac{1}{2},\tfrac{1}{2}}^S\in\left(0,2\pi\right)$ determines $O_{\tfrac{1}{2},\tfrac{1}{2}}^{\hat{S}}$ and the circle $\calC_{\tfrac{1}{2},\tfrac{1}{2}}^{\hat{S}}$. From there, $\phi_{\tfrac{1}{2},\tfrac{1}{2}}^W$ and $\phi_{\tfrac{1}{2},\tfrac{1}{2}}^E$ enable us to reconstruct $\hat{S}(0,1)$ and $\hat{S}(1,1)$. We repeat this procedure : knowing the position of the two endpoints of an edge, we can reconstruct the other two points of a face containing that edge as well as its circumcircle. Thus, we reconstruct iteratively all the vertices and centers. Equations~\eqref{eq:vertexflat} and~\eqref{eq:vertexmonodromy} around a vertex guarantee that the construction of the center $(i,j)$ does not depend on the path chosen from $(\tfrac{1}{2},\tfrac{1}{2})$ to $(i,j)$. Equations~\eqref{eq:diag1}, \eqref{eq:diag2} and \eqref{eq:faceflat} ensure that all four points around a given face are pairwise distinct. Equations~\eqref{eq:faceflat} guarantee that $\Phi_{m,n}^0(\hat{S})=\phi$. To prove that the top boundary of a fundamental domain is obtained as a translate of its bottom boundary, it suffices to show that $\overrightarrow{\hat{S}(x,0)\hat{S}(x+1,0)}=\overrightarrow{\hat{S}(x,n)\hat{S}(x+1,n)}$ for any $0 \leq x\leq m-1$. These two vectors have the same direction by~\eqref{eq:verticalparallel} and the same norm by~\eqref{eq:verticalmonodromy}. The fact that the right boundary is a translate of the left boundary follows similarly from~\eqref{eq:horizontalparallel} and~\eqref{eq:horizontalmonodromy}. The spatial biperiodicity of $\hat{S}$ is a consequence of~\eqref{eq:biperiodicity}.
\end{proof}

\begin{remark}
We stress that, while the $\phi$ coordinates described above are an appropriate framework to derive local recurrence formulas as we do in the next section, they are not convenient at all to construct an element of $\calM_{m,n}^0$. Indeed, it is not straightforward to produce a $(4mn)$-tuple of variables satisfying conditions~\eqref{eq:biperiodicity} to~\eqref{eq:horizontalparallel}. It is actually not even clear from these coordinates what the dimension of $\calM_{m,n}^0$ is. Still, there are two observations we can make to help understand the space $\calM_{m,n}^0$ better.

Firstly, it follows from Theorem 3 of~\cite{BS} that, if we fix the intersection angles  $\theta$ of each pair of neighboring circles so that they sum to $2\pi$ at each vertex, then, provided they satisfy some additional inequalities, they determine a unique element of $\calM_{m,n}^0$ (this element lives in an open strict subset of $\calM_{m,n}^0$, since only cell decompositions are considered in~\cite{BS}, which forbids the existence of folds and non-convex quadrilaterals). It is not hard to show that the dimension of such an assignment of $\theta$ variables at each edge with prescribed sum at each vertex is $mn+1$, hence one should expect the space $\calM_{m,n}^0$ to be of dimension $mn+1$. Note however that one cannot reconstruct the circle pattern locally by knowing these $\theta$ angles only in a region of the graph, thus the $\theta$ variables cannot be used to obtain local recurrence formulas.

Secondly, when $n=2$, one can show that, starting from the knowledge of the assignment of $2m+1$ independent $\phi$ variables in the faces indicated by a dot on Figure~\ref{fig:phireconstruction}, one can progressively reconstruct all the other $\phi$ variables (either one at a time, or two at a time)\footnote{We point out that, since some of the reconstruction equation are quadratic, there are actually four possible $8m$-tuples of $\phi$ variables which correspond to the original $2m+1$-tuple of $\phi$ variables.}.

\begin{figure}[htpb]
\centering
\includegraphics[height=1in]{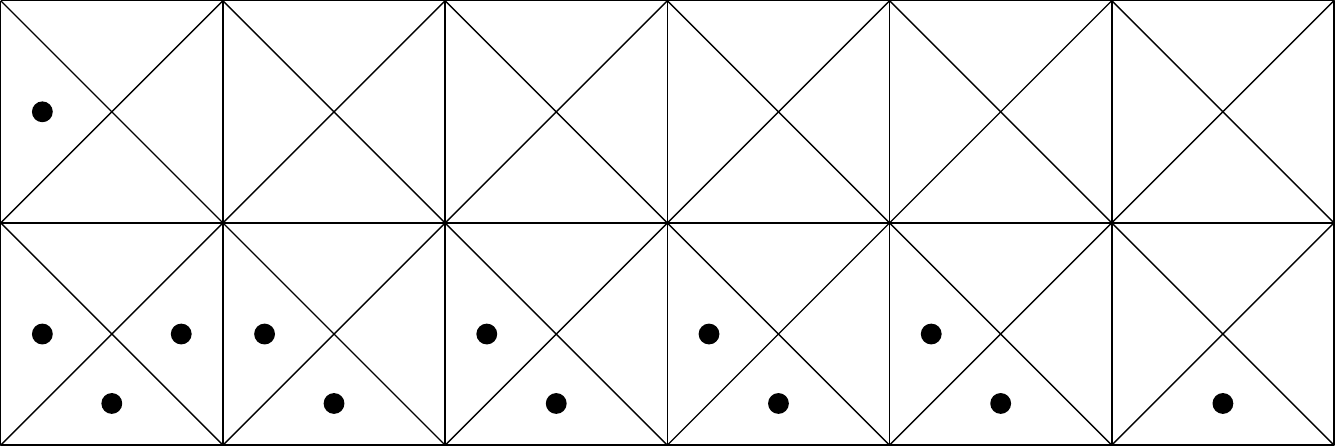}
\caption{A subset of $2m+1$ $\phi$ variables from which one can reconstruct all the other $\phi$ variables in the $m$ by $2$ case.}
\label{fig:phireconstruction}
\end{figure}

This makes the $\phi$ variables convenient to use in the $m$ by $2$ case. It would be interesting to have such a reconstruction result for the general $m$ by $n$ case.
\end{remark}

In the case when $s\neq0$, we define $\calA_{m,n}^s$ in a similar fashion as for $\calA_{m,n}^0$, simply replacing equations~\eqref{eq:biperiodicity},\eqref{eq:verticalmonodromy} and~\eqref{eq:verticalparallel} respectively by
\begin{equation}
\label{eq:biperiodicityshift}
\forall \left(i,j\right)\in\Gamma, \forall D\in\calD,\ \phi_{i,j}^D=\phi_{i+m,j}^D=\phi_{i+s,j+n}^D;
\end{equation}
\begin{multline}
\label{eq:verticalmonodromyshift}
\forall i_0\in\left(\Z+\frac{1}{2}\right)\cap\left(0,m\right), \\
\prod_{\substack{j\in\Z+\frac{1}{2}\\0<j<n}}\sin\frac{\phi_{i_0,j}^S}{2}\prod_{i=i_0+1}^{i_0+s}\sin\frac{\phi_{i,n-\frac{1}{2}}^W}{2}= \prod_{\substack{j\in\Z+\frac{1}{2}\\-1<j<n-1}}\sin\frac{\phi_{i_0,j}^N}{2}\prod_{i=i_0}^{i_0+s-1}\sin\frac{\phi_{i,n-\frac{1}{2}}^E}{2};
\end{multline}
\begin{multline}
\label{eq:verticalparallelshift}
\forall i_0\in\left(\Z+\frac{1}{2}\right)\cap\left(0,m\right), \\
\sum_{\substack{j\in\Z+\frac{1}{2}\\-1<j<n-1}}\phi_{i_0,j}^W + \sum_{i=i_0+1}^{i_0+s}\phi_{i,n-\frac{1}{2}}^N= \sum_{\substack{j\in\Z+\frac{1}{2}\\0<j<n}}\phi_{i_0,j}^E + \sum_{i=i_0}^{i_0+s-1}\phi_{i,n-\frac{1}{2}}^S \mod 2\pi.
\end{multline}
Then the statement of Theorem~\ref{thm:coordinatization} has the following straightforward extension:
\begin{theorem}
\label{thm:coordinatizationshift}
$\Phi_{m,n}^s$ is a bijection from $\calM_{m,n}^s$ to $\calA_{m,n}^s$.
\end{theorem}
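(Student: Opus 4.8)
The plan is to mirror the proof of Theorem~\ref{thm:coordinatization} as closely as possible, isolating the places where the horizontal shift $s$ enters. The five local relations~\eqref{eq:diag1}--\eqref{eq:vertexmonodromy} are intrinsic to any circle pattern in the plane and make no reference to the global periodicity, so both their necessity and the role they play in the reconstruction step carry over verbatim. Likewise, the two horizontal conditions~\eqref{eq:horizontalmonodromy} and~\eqref{eq:horizontalparallel} are unchanged, since the generator $\vec{a}=(m,0)$ is still a pure horizontal translation; their proofs are copied word for word. The entire content of the extension therefore reduces to re-establishing the necessity and the reconstruction role of the three modified relations~\eqref{eq:biperiodicityshift},~\eqref{eq:verticalmonodromyshift} and~\eqref{eq:verticalparallelshift}, which govern the second generator $\vec{b}=(s,n)$.

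For the necessity direction, condition~\eqref{eq:biperiodicityshift} is immediate from the definition of $\vec{b}$-periodicity, exactly as~\eqref{eq:biperiodicity} followed from $(0,n)$-periodicity. The other two relations are obtained by replacing the straight vertical path used in the $s=0$ proof by a staircase path realizing the homology class of $\vec{b}$ on the torus: starting from a face in the column $i=i_0$, one climbs the column and then takes $s$ horizontal steps along the top row $j=n-\tfrac12$. For~\eqref{eq:verticalmonodromyshift}, I would multiply the radius-ratio identities~\eqref{eq:horizontaledgeradius} along the vertical part of the staircase and~\eqref{eq:verticaledgeradius} along its $s$ horizontal steps; since translation preserves radii, the total product around the loop is $1$, and after using~\eqref{eq:biperiodicityshift} to identify the endpoints, the telescoping collapses to precisely the product of sines displayed in~\eqref{eq:verticalmonodromyshift}. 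For~\eqref{eq:verticalparallelshift}, I would run the same turning-angle count as in the $s=0$ case (Figure~\ref{fig:turningangles}) but along the staircase path, so that the $s$ horizontal steps contribute the additional $\phi^N$ and $\phi^S$ terms appearing in that relation; the statement that the monodromy vector in direction $\vec{b}$ has a well-defined direction is what forces the alternating sum to vanish modulo $2\pi$.

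For the reconstruction direction, the propagation that builds $\hat S$ from $\phi\in\calA_{m,n}^s$ (normalized by $\hat S(0,0)=(0,0)$ and $\hat S(1,0)=(1,0)$) is identical to before, and the local relations guarantee, just as in Theorem~\ref{thm:coordinatization}, that vertices and centers are well defined independently of the chosen path and that every face is nondegenerate. The only new verification is that $\hat S$ is genuinely $\vec{b}$-periodic. Here I would show that the bottom edge $\overrightarrow{\hat S(x,0)\hat S(x+1,0)}$ equals the shifted top edge $\overrightarrow{\hat S(x+s,n)\hat S(x+s+1,n)}$ for every $x$: equality of directions follows from~\eqref{eq:verticalparallelshift} and equality of lengths from~\eqref{eq:verticalmonodromyshift}, in complete analogy with the use of~\eqref{eq:verticalparallel} and~\eqref{eq:verticalmonodromy} in the unshifted argument. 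Combined with the unchanged horizontal analysis, this yields a common monodromy vector and hence the full $(s,n)$-periodicity.

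The main obstacle is purely bookkeeping: matching the staircase path to the exact index ranges written in~\eqref{eq:verticalmonodromyshift} and~\eqref{eq:verticalparallelshift}. One must check that the $S$ sines range over $0<j<n$ while the $N$ sines range over $-1<j<n-1$, that the $s$ horizontal steps contribute the $W$ and $E$ factors over $i_0+1\le i\le i_0+s$ and $i_0\le i\le i_0+s-1$ respectively, and that these are exactly the edges crossed when the staircase re-enters the fundamental domain after the shift. Once the path is drawn carefully and the telescoping indices are aligned with~\eqref{eq:biperiodicityshift}, the argument is a transcription of the $s=0$ case; no genuinely new geometric idea is required, which is why the theorem is indeed a straightforward extension.
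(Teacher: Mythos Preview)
Your proposal is correct and matches the paper's approach: the paper gives no separate proof at all, simply stating that Theorem~\ref{thm:coordinatizationshift} is a ``straightforward extension'' of Theorem~\ref{thm:coordinatization} once the three equations~\eqref{eq:biperiodicity},~\eqref{eq:verticalmonodromy},~\eqref{eq:verticalparallel} are replaced by their shifted versions~\eqref{eq:biperiodicityshift},~\eqref{eq:verticalmonodromyshift},~\eqref{eq:verticalparallelshift}. Your sketch is precisely the detail-filling the paper leaves to the reader, with the correct key observation that the only change is to replace the straight vertical path by a staircase path realizing the homology class of $(s,n)$.
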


\section{Recurrence formulas}
\label{sec:recurrenceformulas}

In the previous section, we associated to an SGCP $S\in\calS_{m,n}^s$ a set of coordinates $\phi_{k,l}^D\in\left(0,2\pi\right)$ with $(k,l,D)\in\Gamma\times\calD$. We define for any $(k,l,D)\in\Gamma\times\calD$ the variables $X_{k,l}^D=e^{i\phi_{k,l}^D}$. In this section, we give formulas describing the action of the maps $\mu_B$ and $\mu_W$ on the variables $X$. Recall that $F_{\tfrac{1}{2},\tfrac{1}{2}}$ is a black face. We will explain how to compute the new values of $X_{\tfrac{1}{2},\tfrac{1}{2}}^N$ (variable associated with a black face) and $X_{\tfrac{1}{2},\tfrac{3}{2}}^S$ (variable associated with a white face) after applying $\mu_B$, the other formulas will follow by symmetry.

Fix $S\in\calS_{m,n}^s$. We will use the following notation for the $X$ coordinates of $S$ in order to alleviate the formulas (see Figure~\ref{fig:XYrenamed}) :
\begin{align*}
&X_N=X_{\frac{1}{2},\frac{1}{2}}^N &&X_W=X_{\frac{1}{2},\frac{1}{2}}^W &&&X_S=X_{\frac{1}{2},\frac{1}{2}}^S &&&&X_E=X_{\frac{1}{2},\frac{1}{2}}^E \\
&Y_N=X_{\frac{1}{2},\frac{3}{2}}^S &&Y_W=X_{-\frac{1}{2},\frac{1}{2}}^E &&&Y_S=X_{\frac{1}{2},-\frac{1}{2}}^N &&&&Y_E=X_{\frac{3}{2},\frac{1}{2}}^W.
\end{align*}
\begin{figure}[htpb]
\centering
\includegraphics[height=2in]{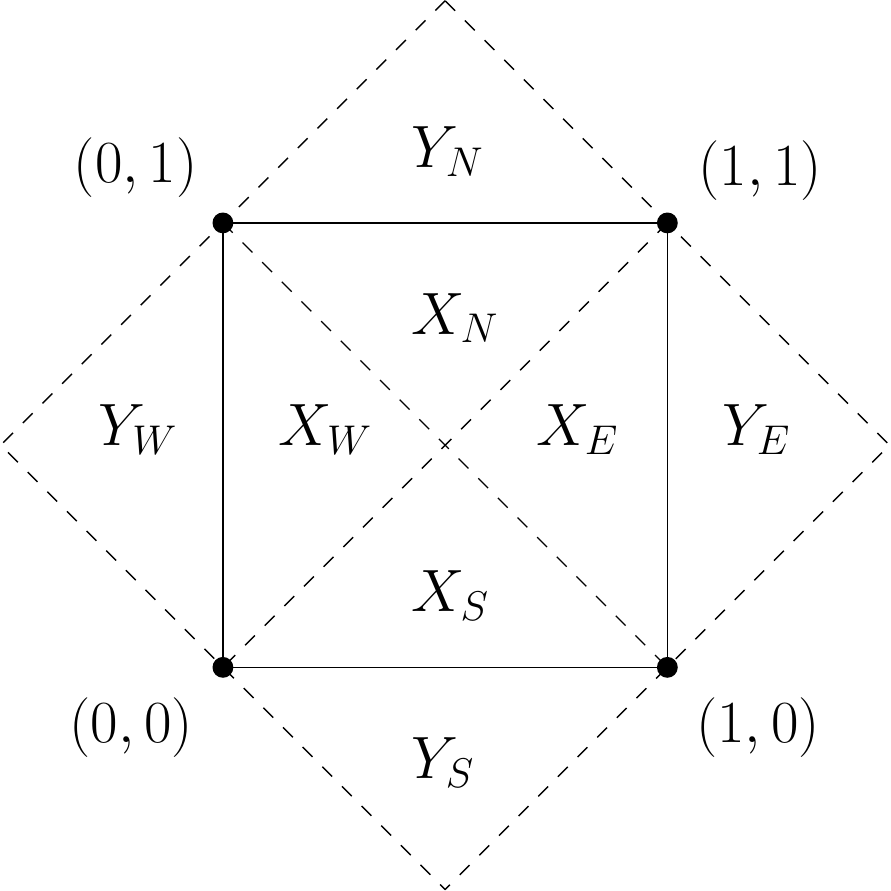}
\caption{New notation in and around face $F_{\tfrac{1}{2},\tfrac{1}{2}}$.}
\label{fig:XYrenamed}
\end{figure}

Let $S'=\mu_B(S)$. We will denote with a prime the corresponding $X$ coordinates of $S'$. For example $X_N'$ is the $X$ coordinate of $S'$ associated with the triangle in position $\left(\frac{1}{2},\frac{1}{2},N\right)$. We can express $X'_N$ and $Y'_N$ as functions of $X_D$ and $Y_D$ for $D\in\calD$:

\begin{theorem}
\label{thm:recurrenceformulas}
The following formulas hold :
\begin{gather}
Y'_N=Y_N \frac{\left(1-\frac{\big(1-X_W^{-1}\big)\big(1-Y_N^{-1}\big)}{\big(1-Y_W\big)\big(1-X_N\big)}\right)\left(1-\frac{\big(1-X_E^{-1}\big)\big(1-Y_N^{-1}\big)}{\big(1-Y_E\big)\big(1-X_N\big)}\right)}
{\left(1-\frac{\big(1-X_W\big)\big(1-Y_N\big)}{\big(1-Y_W^{-1}\big)\big(1-X_N^{-1}\big)}\right)\left(1-\frac{\big(1-X_E\big)\big(1-Y_N\big)}{\big(1-Y_E^{-1}\big)\big(1-X_N^{-1}\big)}\right)} \label{eq:newY} \\
X_N'= \frac{1-\frac{\big(1-X_N^{-1}\big)\big(1-Y_W^{-1}\big)\big(1-{Y'_N}^{-1}\big)}{\big(1-Y_N\big)\big(1-X_W\big)\big(1-Y'_W\big)}}
{1-\frac{\big(1-X_N\big)\big(1-Y_W\big)\big(1-Y'_N\big)}{\big(1-Y_N^{-1}\big)\big(1-X_W^{-1}\big)\big(1-{Y'_W}^{-1}\big)}}.
 \label{eq:newX}
\end{gather}

\end{theorem}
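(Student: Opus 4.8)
The plan is to reduce the two formulas to a local computation around the black face $F_{\frac12,\frac12}$ and to carry it out in complex coordinates. Write $\calC_N=\calC_{\frac12,\frac32}$, $\calC_W=\calC_{-\frac12,\frac12}$, $\calC_E=\calC_{\frac32,\frac12}$ for the white circles surrounding $F_{\frac12,\frac12}$ (the fourth, $\calC_S$, does not enter \eqref{eq:newY}--\eqref{eq:newX}), with centers $O_N,O_W,O_E$ and radii $R_N,R_W,R_E$; these circles are fixed by $\mu_B$. The relevant vertices are $S(0,1)$, the intersection point of $\calC_N$ and $\calC_W$ lying on the old black circle, and $S(1,1)=\calC_N\cap\calC_E$, and their images $S'(0,1),S'(1,1)$ are the respective second intersection points. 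By the symmetry of the configuration it suffices to establish \eqref{eq:newY} and \eqref{eq:newX}.

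For \eqref{eq:newY} I would place $O_N$ at the origin and normalise $R_N=1$, so that $S(0,1),S'(0,1),S(1,1),S'(1,1)$ become unit complex numbers $p,p',q,q'$. Since $\phi_{\frac12,\frac32}^S$ is the central angle of the chord $S(0,1)S(1,1)$ seen from $O_N$, one has $Y_N=q/p$ for a suitable orientation, and likewise $Y_N'=q'/p'$. The pair $\{p,p'\}$ is cut out on the unit circle by $\calC_W$, whence $pp'=e^{2i\arg(O_W-O_N)}$ and, symmetrically, $qq'=e^{2i\arg(O_E-O_N)}$; substituting these makes the absolute positions drop out and gives
\[
Y_N'=Y_N^{-1}\,e^{2i\angle O_W O_N O_E}.
\]
Decomposing the central angle $\angle O_W O_N O_E$ through the directions $O_N\to S(0,1)$ and $O_N\to S(1,1)$ then yields $e^{2i\angle O_W O_N O_E}=Y_N^{2}\,e^{2i\gamma_N^E}e^{-2i\gamma_N^W}$, hence $Y_N'=Y_N\,e^{2i\gamma_N^E}e^{-2i\gamma_N^W}$, where $2\gamma_N^W$ (resp. $2\gamma_N^E$) is the central angle at $O_N$ subtended by the common chord $\calC_N\cap\calC_W$ (resp. $\calC_N\cap\calC_E$). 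This already exhibits the product structure of \eqref{eq:newY}, with one factor attached to $\calC_W$ and one to $\calC_E$.

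It then remains to express $e^{2i\gamma_N^W}$ and $e^{2i\gamma_N^E}$ through the $X$ and $Y$ variables. The half-angle $\gamma_N^W$ sits in the triangle $O_N S(0,1) O_W$, whose angle at $S(0,1)$ is the exterior intersection angle $\theta_{N,W}$ and whose sides at that vertex have lengths $R_N,R_W$; the law of sines and the angle sum of this triangle determine $\gamma_N^W$ from $\theta_{N,W}$ and the ratio $R_N/R_W$, the latter obtained by chaining the adjacent-circle radius relations \eqref{eq:verticaledgeradius} and \eqref{eq:horizontaledgeradius} through the old black circle $\calC_{\frac12,\frac12}$. The angle $\theta_{N,W}$ is recovered by angle chasing at $S(0,1)$: both $O_N$ and $O_{\frac12,\frac12}$ lie on the perpendicular bisector of $S(0,1)S(1,1)$ while both $O_W$ and $O_{\frac12,\frac12}$ lie on that of $S(0,0)S(0,1)$, so $\theta_{N,W}$ is an explicit combination of the half-angles $\tfrac12\arg X_N,\tfrac12\arg Y_N,\tfrac12\arg X_W,\tfrac12\arg Y_W$. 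Feeding these into the identity $1-e^{i\phi}=-2i\,e^{i\phi/2}\sin\tfrac{\phi}{2}$ converts every sine (a radius ratio) and every half-angle exponential (a direction) into the factors $\big(1-X^{\pm1}\big)$ and $\big(1-Y^{\pm1}\big)$ of \eqref{eq:newY}, and the same computation with $\calC_E$ in place of $\calC_W$ supplies the remaining factor.

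Formula \eqref{eq:newX} follows by the same mechanism applied to the mutated black circle $\calC'$. The points $S'(0,1),S'(1,1)$ lie on both $\calC_N$ and $\calC'$, so the chord $S'(0,1)S'(1,1)$ is seen under the central angle $Y_N'$ from $O_N$ and under $X_N'$ from the new black center, while $\calC_W\cap\calC'=\{S'(0,0),S'(0,1)\}$ relates $X_W'$ to $Y_W'$; crucially the white--white intersection angle is the same at both intersections of $\calC_N$ and $\calC_W$, so $\angle O_N S'(0,1)O_W=\theta_{N,W}$ is already known, and combining the two-circle triangle relations for $(\calC_N,\calC')$ and $(\calC_W,\calC')$ with the new radius ratio at $S'(0,1)$ (the analogue of \eqref{eq:vertexmonodromy}) produces $X_N'$ as the stated function of $X_N,X_W,Y_N,Y_W,Y_N',Y_W'$. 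The main obstacle throughout is the sign and branch bookkeeping: each half-angle is defined only modulo $2\pi$, and the triangle angle-sum relations (such as $\gamma_N^W+\gamma_W^N+\theta_{N,W}=\pi$) hold with signs depending on the cyclic arrangement of the three centers, which can flip across folds and non-convex quadrilaterals. Keeping these choices coherent, so that the accumulated sines and direction exponentials recombine into exactly the claimed products with no spurious inversion or sign, is the delicate part; Proposition~\ref{prop:effective} and the orientation conventions built into the $\phi$ variables should render all the choices uniform.
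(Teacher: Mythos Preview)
Your plan is correct in outline and lands on the same geometric facts the paper uses: the decomposition $Y_N'=Y_N\cdot(\text{chord factor from }\calC_W)\cdot(\text{chord factor from }\calC_E)$, the kite/triangle relations linking the half-chord angles $\gamma_N^W,\gamma_N^E$ to the intersection angle $\theta_{N,W}$ and to the radius ratios, and the identity $1-e^{i\phi}=-2ie^{i\phi/2}\sin\tfrac{\phi}{2}$ to pass from sines of half-angles to the factors $(1-X^{\pm1})$.

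The main organisational difference is that the paper extracts a purely algebraic lemma which does all the sign and branch bookkeeping in one stroke: if angles $c_0,\dots,c_k,d_0,\dots,d_k\in(0,2\pi)$ satisfy $\tfrac12\sum(c_j+d_j)\equiv p\pi\pmod{2\pi}$ and $\prod\sin\tfrac{c_j}{2}=\prod\sin\tfrac{d_j}{2}$, then $C_0=e^{ic_0}$ is the explicit rational expression in $C_1,\dots,C_k,D_1,\dots,D_k$ that appears in \eqref{eq:newY}--\eqref{eq:newX}. The geometric work then reduces to checking, via the kite observation, that three pairs of angles around the vertex $S(0,1)$ satisfy exactly these two hypotheses (angle sum from flatness, sine product from the closed cycle of radius ratios), and applying the lemma with $k=2$ and $p=1$ (for each chord factor) or $k=3$ and $p=0$ (for $X_N'$). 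This replaces your proposed law-of-sines solve for $\gamma_N^W$ followed by substitution of $\theta_{N,W}$ and $R_N/R_W$; the two are equivalent, but the lemma absorbs precisely the ``delicate part'' you flag, since its proof is a two-line algebraic manipulation with no case analysis on orientations or folds. Your route would work, but you would effectively be reproving the $k=2$ and $k=3$ instances of that lemma inside the geometry, with all the attendant sign tracking.
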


\begin{remark}
The quantity $Y'_W$ needed to compute $X'_N$ can be expressed as a function of $X_N,X_W,X_S,Y_N,Y_W$ and $Y_S$, with a formula similar to~\eqref{eq:newY}.
\end{remark}

\begin{proof}
We will use several times the following lemma, the proof of which is provided after the proof of the theorem :
\begin{lemma}
\label{lem:anglecomputation}
Let $\left(c_0,\ldots,c_k,d_0,\ldots,d_k\right)\in\left(0,2\pi\right)^{2k+2}$ and write $C_j=e^{ic_j}$ and $D_j=e^{id_j}$ for $0\leq j\leq k$. Let $p\in\left\{0,1\right\}$ and assume the following relations hold:
\begin{gather}
\frac{1}{2}\sum_{j=0}^k \left(c_j+d_j\right) \equiv p\pi \mod 2\pi \label{eq:sumangleslem} \\
\prod_{j=0}^k \sin\frac{c_j}{2}=\prod_{j=0}^k \sin\frac{d_j}{2} \label{eq:productsineslem}.
\end{gather}
Then we have
\begin{align}
C_0&=\frac{(-1)^p+\frac{\prod_{j=1}^k\left(1-D_j^{-1}\right)}{\prod_{j=1}^k\left(C_j-1\right)}}{(-1)^p+\frac{\prod_{j=1}^k\left(D_j-1\right)}{\prod_{j=1}^k\left(1-C_j^{-1}\right)}} \\
D_0&=\frac{(-1)^p+\frac{\prod_{j=1}^k\left(1-C_j^{-1}\right)}{\prod_{j=1}^k\left(D_j-1\right)}}{(-1)^p+\frac{\prod_{j=1}^k\left(C_j-1\right)}{\prod_{j=1}^k\left(1-D_j^{-1}\right)}}.
\end{align}
\end{lemma}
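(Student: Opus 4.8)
The plan is to reduce the two hypotheses to a pair of scalar constraints on the unknowns and then verify the claimed closed form by a direct computation. Everything hinges on the elementary half-angle identity $1-e^{i\theta}=-2i\sin\frac{\theta}{2}\,e^{i\theta/2}$, valid for all real $\theta$. Applying it to $\theta=c_j$ and to $\theta=-c_j$ yields $C_j-1=2i\sin\frac{c_j}{2}e^{ic_j/2}$ and $1-C_j^{-1}=2i\sin\frac{c_j}{2}e^{-ic_j/2}$, together with the analogous identities for the $D_j$. Since $c_j,d_j\in(0,2\pi)$, every sine $\sin\frac{c_j}{2},\sin\frac{d_j}{2}$ is strictly positive, so none of these factors vanishes and each quotient appearing in the statement is well defined.

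First I would introduce the abbreviations $R=\prod_{j=1}^k\sin\frac{d_j}{2}\big/\prod_{j=1}^k\sin\frac{c_j}{2}$ and $\Sigma=\frac{1}{2}\sum_{j=1}^k(c_j+d_j)$. Substituting the half-angle identities, the factors of $2i$ and the products of sines cancel in the intended way, and the two fractions occurring in the formula for $C_0$ collapse to $\frac{\prod_{j=1}^k(1-D_j^{-1})}{\prod_{j=1}^k(C_j-1)}=R\,e^{-i\Sigma}$ and $\frac{\prod_{j=1}^k(D_j-1)}{\prod_{j=1}^k(1-C_j^{-1})}=R\,e^{i\Sigma}$. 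Thus the claimed right-hand side for $C_0$ equals $\frac{(-1)^p+R\,e^{-i\Sigma}}{(-1)^p+R\,e^{i\Sigma}}$. Now I feed in the hypotheses: the angle-sum relation \eqref{eq:sumangleslem} reads $\frac{c_0+d_0}{2}+\Sigma\equiv p\pi\pmod{2\pi}$, whence $e^{i\Sigma}=(-1)^p e^{-i(c_0+d_0)/2}$ and, since $(-1)^{-p}=(-1)^p$, also $e^{-i\Sigma}=(-1)^p e^{i(c_0+d_0)/2}$; the product relation \eqref{eq:productsineslem}, after cancelling the $j\ge 1$ factors, gives $R=\sin\frac{c_0}{2}\big/\sin\frac{d_0}{2}$. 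Substituting both, the common factors $(-1)^p$ cancel and the quotient reduces to $\frac{1+R\,e^{i(c_0+d_0)/2}}{1+R\,e^{-i(c_0+d_0)/2}}$.

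Finally I would clear $R$ by multiplying numerator and denominator by $\sin\frac{d_0}{2}$ and expand every sine via $\sin\alpha=\frac{e^{i\alpha}-e^{-i\alpha}}{2i}$. A short manipulation shows the numerator equals $\frac{1}{2i}\big(e^{ic_0}e^{id_0/2}-e^{-id_0/2}\big)$ and the denominator equals $\frac{1}{2i}\big(e^{id_0/2}-e^{-ic_0}e^{-id_0/2}\big)$; the former is exactly $e^{ic_0}$ times the latter, so the quotient is $e^{ic_0}=C_0$, as claimed. The formula for $D_0$ then follows with no extra work: both hypotheses \eqref{eq:sumangleslem} and \eqref{eq:productsineslem} are invariant under the exchange $c_j\leftrightarrow d_j$, and that exchange carries the $C_0$-formula to the $D_0$-formula.

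The computation itself is essentially routine, so the main obstacle is not any single hard step but rather the careful bookkeeping of signs and phases. In particular, the fact that \eqref{eq:sumangleslem} holds only modulo $2\pi$ is precisely what makes $e^{i\Sigma}$ (rather than $\Sigma$ itself) the well-defined quantity and lets the parity $p$ enter solely through $(-1)^p$. I would also flag the implicit nondegeneracy needed for the statement to make sense, namely that the denominators $(-1)^p+R\,e^{i\Sigma}$ and its $D_0$-counterpart do not vanish; unwinding the factorization above, this amounts to $c_0+d_0\not\equiv 0\pmod{2\pi}$, which is exactly the type of condition guaranteed by \eqref{eq:diag1}–\eqref{eq:diag2} for the circle patterns to which the lemma is applied.
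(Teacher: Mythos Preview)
Your proof is correct. Both you and the paper exploit the same half-angle identity $1-e^{i\theta}=-2i\sin\tfrac{\theta}{2}\,e^{i\theta/2}$ to convert the sine product into an identity among the $C_j$ and $D_j$, and both use the angle-sum hypothesis to absorb the phase factors. The organizational difference is that the paper \emph{derives} the formula: it first rewrites~\eqref{eq:productsineslem} as the single relation $\prod_{j=0}^k(1-C_j^{-1})=(-1)^p\prod_{j=0}^k(D_j-1)$, then substitutes $D_0=C_0^{-1}\prod_{j\ge 1}C_j^{-1}D_j^{-1}$ from~\eqref{eq:sumangleslem} and solves the resulting linear equation in $C_0^{-1}$. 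You instead \emph{verify} the formula: you simplify the right-hand side to $\dfrac{(-1)^p+Re^{-i\Sigma}}{(-1)^p+Re^{i\Sigma}}$ and then feed in the hypotheses to check it equals $e^{ic_0}$. The paper's route explains where the expression comes from and is marginally shorter; your route is perfectly sound and has the small bonus of making the nondegeneracy condition on the denominator explicit.
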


We rename the vertices and centers we will need, in order to have more compact notation. Define $A=S(0,1)$, $B=S(0,0)$, $C=S(1,0)$, $D=S(1,1)$, $A'=S'(0,1)$, $B'=S'(0,0)$, $C'=S'(1,0)$, $D'=S'(1,1)$, $O_1=O_{\tfrac{1}{2},\tfrac{3}{2}}^S$, $O_2=O_{-\tfrac{1}{2},\tfrac{1}{2}}^S$, $O_3=O_{\tfrac{1}{2},-\tfrac{1}{2}}^S$, $O_4=O_{\tfrac{3}{2},\tfrac{1}{2}}^S$, $O=O_{\tfrac{1}{2},\tfrac{1}{2}}^S$ and $O'=O_{\tfrac{1}{2},\tfrac{1}{2}}^{S'}$.

We begin with the following observation about half-angles of kites. Let $EFGH$ be a kite, i.e. a quadrilateral such that $EF=EH$ and $GF=GH$. Write $\alpha=\angle FEH$ and $\gamma=\angle HGF$ with $(\alpha,\gamma)\in\left[0,2\pi\right)^2$, hence $\left(\tfrac{\alpha}{2},\tfrac{\gamma}{2}\right)\in\left[0,\pi\right)^2$. If $EFGH$ is positively oriented, then $\angle FEG \equiv \tfrac{\alpha}{2} \mod 2\pi$ and $\angle EGF \equiv \tfrac{\gamma}{2} \mod 2\pi$, while if $EFGH$ is negatively oriented, then $\angle FEG \equiv \tfrac{\alpha}{2} +\pi \mod 2\pi$ and $\angle EGF \equiv \tfrac{\gamma}{2} +\pi \mod 2\pi$. In any case, $\angle FEG + \angle EGF \equiv \tfrac{\alpha+\gamma}{2} \mod 2\pi$.

\begin{figure}[htbp]
\centering
\subfloat[Kites before mutation.]{\label{fig:oldkites}\includegraphics[height=1.5in]{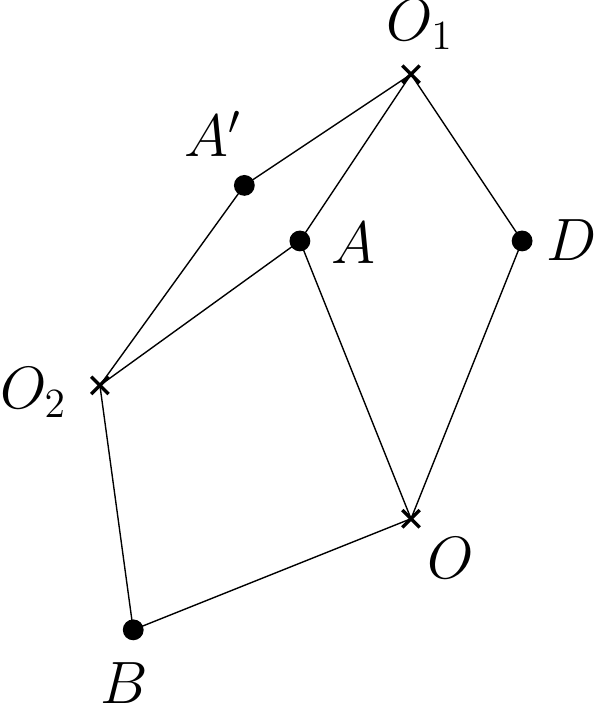}}
\hspace{2em}
\subfloat[Kites after mutation.]{\label{fig:newkites}\includegraphics[height=1.5in]{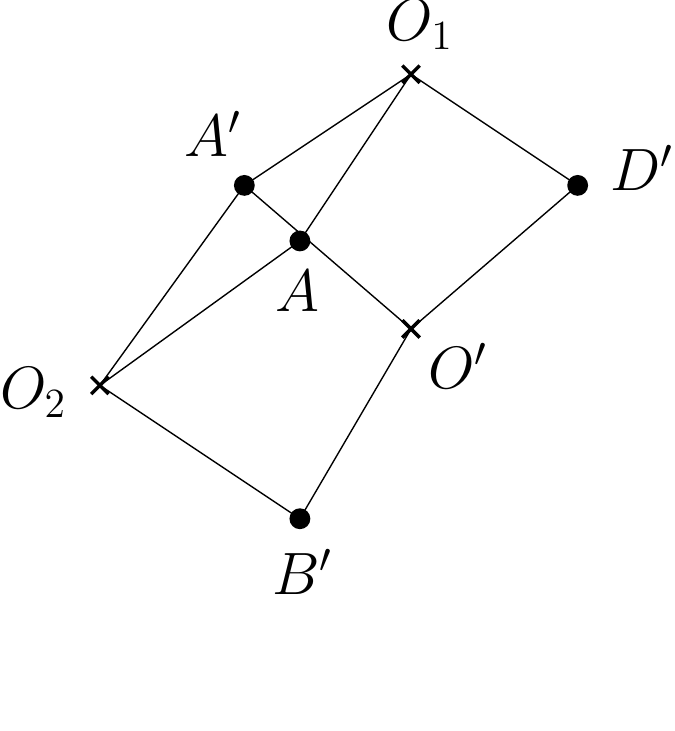}}
\caption{The two triples of kites used to prove formulas~\eqref{eq:sumhalfanglesS} and~\eqref{eq:sumhalfanglesT}.}
\label{fig:oldandnewkites}
\end{figure}

We now prove formula~\eqref{eq:newY}. Using the previous observation in each of the three kites $O_1A'O_2A$, $O_2BOA$ and $ODO_1A$, we obtain the following relations :
\begin{align}
\frac{\angle A'O_1A+\angle AO_2A'}{2}&\equiv \angle O_2O_1A + \angle AO_2O_1 &&\mod 2\pi \\
\frac{\angle BO_2A +\angle AOB}{2}&\equiv \angle OO_2A + \angle AOO_2 &&\mod 2\pi \\
\frac{\angle DOA + \angle AO_1D}{2}&\equiv \angle O_1OA + \angle AO_1O &&\mod 2\pi.
\end{align}

Thus
\begin{equation}
\label{eq:sumhalfanglesS}
\frac{\angle A'O_1A+\angle AO_2A'+\angle BO_2A +\angle AOB+\angle DOA + \angle AO_1D}{2}\equiv \pi \mod 2\pi.
\end{equation}
Furthermore, writing ratios of radii of the circles $\calC_{\frac{1}{2},\frac{1}{2}}^S$, $\calC_{-\frac{1}{2},\frac{1}{2}}^S$ and $\calC_{\frac{1}{2},\frac{3}{2}}^S$ as ratios of sines of half-angles, we obtain
\[
\frac{\sin \frac{\angle AO_2A'}{2}\sin \frac{\angle AOB}{2}\sin \frac{\angle AO_1D}{2}}{\sin \frac{\angle A'O_1A}{2}\sin \frac{\angle BO_2A}{2}\sin \frac{\angle DOA}{2}}=1.
\]
Applying Lemma~\ref{lem:anglecomputation} with $p=1$, we deduce that
\[
e^{i\angle A'O_1A}=
\frac{\left(1-\frac{\big(1-X_W^{-1}\big)\big(1-Y_N^{-1}\big)}{\big(1-Y_W\big)\big(1-X_N\big)}\right)}
{\left(1-\frac{\big(1-X_W\big)\big(1-Y_N\big)}{\big(1-Y_W^{-1}\big)\big(1-X_N^{-1}\big)}\right)}.
\]
Similarly,
\[
e^{i\angle DO_1D'}=
\frac{\left(1-\frac{\big(1-X_E^{-1}\big)\big(1-Y_N^{-1}\big)}{\big(1-Y_E\big)\big(1-X_N\big)}\right)}
{\left(1-\frac{\big(1-X_E\big)\big(1-Y_N\big)}{\big(1-Y_E^{-1}\big)\big(1-X_N^{-1}\big)}\right)}.
\]
Since circumcircles of white faces are not changed by $\mu_B$, we have $O_1=O_{\tfrac{1}{2},\tfrac{3}{2}}^{S'}$ and we finally obtain~\eqref{eq:newY} by noting that
\[
Y'_N = e^{i\left(\angle A'O_1A + \angle AO_1D + \angle DO_1D'\right)}.
\]

By a proof similar to the one of~\eqref{eq:sumhalfanglesS}, we have
\begin{multline}
\label{eq:sumhalfanglesT}
\frac{1}{2}\bigg(\angle AO_1A'+\angle A'O_2A+\angle B'O_2A' +\angle A'O'B' \\
+\angle D'O'A' + \angle A'O_1D'\bigg)\equiv \pi \mod 2\pi.
\end{multline}
Summing~\eqref{eq:sumhalfanglesS} and~\eqref{eq:sumhalfanglesT}, we get
\begin{multline}
\frac{1}{2}\bigg(\angle BO_2A +\angle AOB+\angle DOA + \angle AO_1D +\angle B'O_2A' +\angle A'O'B' \\
+\angle D'O'A' + \angle A'O_1D'\bigg)\equiv 0\mod 2\pi.
\end{multline}
Furthermore, rewriting the formula
\[
\frac{R_{\frac{1}{2},\frac{3}{2}}^S}{R_{\frac{1}{2},\frac{1}{2}}^S}
\frac{R_{\frac{1}{2},\frac{1}{2}}^S}{R_{-\frac{1}{2},\frac{1}{2}}^S}
\frac{R_{-\frac{1}{2},\frac{1}{2}}^{S'}}{R_{\frac{1}{2},\frac{1}{2}}^{S'}}
\frac{R_{\frac{1}{2},\frac{1}{2}}^{S'}}{R_{\frac{1}{2},\frac{3}{2}}^{S'}}=1
\]
in terms of sines of half-angles, we obtain
\[
\frac{\sin\frac{\angle DOA}{2}\sin\frac{\angle BO_2A}{2}\sin\frac{\angle A'O'B' }{2}\sin\frac{\angle A'O_1D' }{2}}{\sin\frac{\angle AO_1D}{2}\sin\frac{\angle AOB}{2}\sin\frac{\angle B'O_2A'}{2}\sin\frac{\angle D'O'A'}{2}}=1.
\]
Formula~\eqref{eq:newX} then follows from Lemma~\ref{lem:anglecomputation} with $p=0$.
\end{proof}

We now prove Lemma~\ref{lem:anglecomputation}.
\begin{proof}
Rewrite~\eqref{eq:productsineslem} as
\[
\prod_{j=0}^k e^{i\frac{c_j}{2}}\left(1-C_j^{-1}\right)=\prod_{j=0}^k e^{-i\frac{d_j}{2}}\left(D_j-1\right).
\]
Using~\eqref{eq:sumangleslem}, we observe that
\[
\prod_{j=0}^k e^{i\frac{c_j+d_j}{2}}=(-1)^p,
\]
hence
\begin{equation}
\label{eq:linsys}
\prod_{j=0}^k \left(1-C_j^{-1}\right)=(-1)^p\prod_{j=0}^k \left(D_j-1\right).
\end{equation}
From~\eqref{eq:sumangleslem} we obtain that
\[
D_0=C_0^{-1}\prod_{j=1}^k C_j^{-1}D_j^{-1}.
\]
Substituting this in~\eqref{eq:linsys}, we obtain the formula for $C_0$. The formula for $D_0$ follows by symmetry.
\end{proof}

\section{The isoradial case}
\label{sec:isoradial}

As an application of the recurrence formulas of Theorem~\ref{thm:recurrenceformulas}, we show that the dynamics is trivial on some subset of $\calS_{m,n}^s$, which is equal to the whole of $\calS_{m,n}^s$ when $n=1$ and $s\in\left\{1,m-1\right\}$.

A circle pattern is said to be \emph{isoradial} if all the circles have a common radius. Isoradial circle patterns form a well-studied class of graphs, see for example~\cite{Duffin,Kenyon1,KS}. Denote by $\calI_{m,n}^s$ the subset of $\calS_{m,n}^s$ formed of isoradial circle patterns. We have the following characterization of isoradial patterns in terms of the $\phi$ angles :

\begin{lemma}
\label{lem:isoradialphi}
A circle pattern $S\in\calS_{m,n}^s$ is isoradial if and only if the following two statements hold :
\begin{align}
\forall (i,j)\in\Gamma, \ &\phi_{i,j}^E=\phi_{i+1,j}^W \label{eq:isoradialphi1} \\
\forall (i,j)\in\Gamma, \ &\phi_{i,j}^N=\phi_{i,j+1}^S. \label{eq:isoradialphi2}
\end{align}
\end{lemma}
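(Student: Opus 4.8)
The plan is to treat the two implications separately, noting that the ``if'' direction is elementary while the ``only if'' direction carries all the geometric content. Throughout I will rely on the radius relations~\eqref{eq:verticaledgeradius} and~\eqref{eq:horizontaledgeradius}, which express the ratio of the radii of two neighbouring circles as a ratio of sines of half of the relevant $\phi$ angles.

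For the ``if'' direction I would argue as follows. Suppose~\eqref{eq:isoradialphi1} and~\eqref{eq:isoradialphi2} hold. Then for every $(i,j)\in\Gamma$ the right-hand sides of~\eqref{eq:verticaledgeradius} and~\eqref{eq:horizontaledgeradius} equal $1$, so that $R_{i+1,j}^S=R_{i,j}^S$ and $R_{i,j+1}^S=R_{i,j}^S$. Hence any two circumcircles whose faces are horizontally or vertically adjacent share a common radius; since the faces of $\Z^2$ are connected through such adjacencies, all circles of $S$ have the same radius and $S$ is isoradial. This direction needs nothing beyond the radius formulas and connectivity.

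The ``only if'' direction is the heart of the statement. Assume $S$ is isoradial with common radius $R$. Fix $(i,j)\in\Gamma$ and consider the edge shared by the faces $F_{i,j}$ and $F_{i+1,j}$, with endpoints $P=S(i+\tfrac12,j+\tfrac12)$ and $Q=S(i+\tfrac12,j-\tfrac12)$, which are distinct since the four vertices of each face are pairwise distinct. Relation~\eqref{eq:verticaledgeradius} only yields $\sin\tfrac{\phi_{i,j}^E}{2}=\sin\tfrac{\phi_{i+1,j}^W}{2}$, and because both half-angles lie in $(0,\pi)$ this leaves the two possibilities $\phi_{i,j}^E=\phi_{i+1,j}^W$ and $\phi_{i,j}^E+\phi_{i+1,j}^W=2\pi$. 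The crux is to eliminate the second one.

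To do so I would examine the positions of the two circumcenters. The circles $\calC_{i,j}^S$ and $\calC_{i+1,j}^S$ both pass through $P$ and $Q$ and have the same radius $R$, so their centers $O_{i,j}^S$ and $O_{i+1,j}^S$ both lie on the perpendicular bisector of $PQ$ at distance $\sqrt{R^2-|PQ|^2/4}$ from its midpoint. By condition~(2) in the definition of an SGCP these neighbouring circumcenters are distinct, hence they are the two reflections of one another across the line $PQ$. Reflection in that line fixes $P$ and $Q$, sends $O_{i,j}^S$ to $O_{i+1,j}^S$, and reverses orientation: it therefore maps the oriented angle $\phi_{i,j}^E=\angle Q\,O_{i,j}^S\,P$ to $2\pi-\phi_{i,j}^E=\angle Q\,O_{i+1,j}^S\,P$, so that $\phi_{i+1,j}^W=\angle P\,O_{i+1,j}^S\,Q=\phi_{i,j}^E$, which is~\eqref{eq:isoradialphi1}; the identity~\eqref{eq:isoradialphi2} follows verbatim for a horizontal shared edge. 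I expect the main obstacle to be exactly this orientation bookkeeping, namely checking that the orientation-reversing reflection, combined with the swap of $P$ and $Q$ between the definitions of $\phi^E$ and $\phi^W$, produces equal rather than supplementary oriented angles, and that the two circles are genuinely mirror images rather than coincident --- which is precisely where condition~(2) is indispensable.
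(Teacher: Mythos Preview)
Your proof is correct and is essentially the same geometric argument as the paper's, only spelled out more carefully. The paper compresses both directions into the single observation that isoradiality is equivalent to every edge-kite $O_{i,j}^S\,P\,O_{i+1,j}^S\,Q$ being a rhombus, and that this in turn is equivalent to the two $\phi$'s in that kite being equal; your reflection argument across the chord $PQ$ is precisely the rigorous unpacking of ``rhombus $\Rightarrow$ equal opposite oriented angles,'' including the orientation bookkeeping and the use of condition~(2) to guarantee the centers are distinct, which the paper leaves implicit. Your ``if'' direction via the radius ratios~\eqref{eq:verticaledgeradius}--\eqref{eq:horizontaledgeradius} is a clean alternative to rerunning the rhombus argument in reverse.
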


\begin{proof}
Clearly, a circle pattern is isoradial if and only if all the kites formed by two adjacent vertices and the two circumcenters of the two faces containing these two vertices are rhombi (see Figure~\ref{fig:rhombus}). In terms of the $\phi$ variables, the two $\phi$'s present in a kite must be equal.
\end{proof}

\begin{figure}[htpb]
\centering
\includegraphics[height=2in]{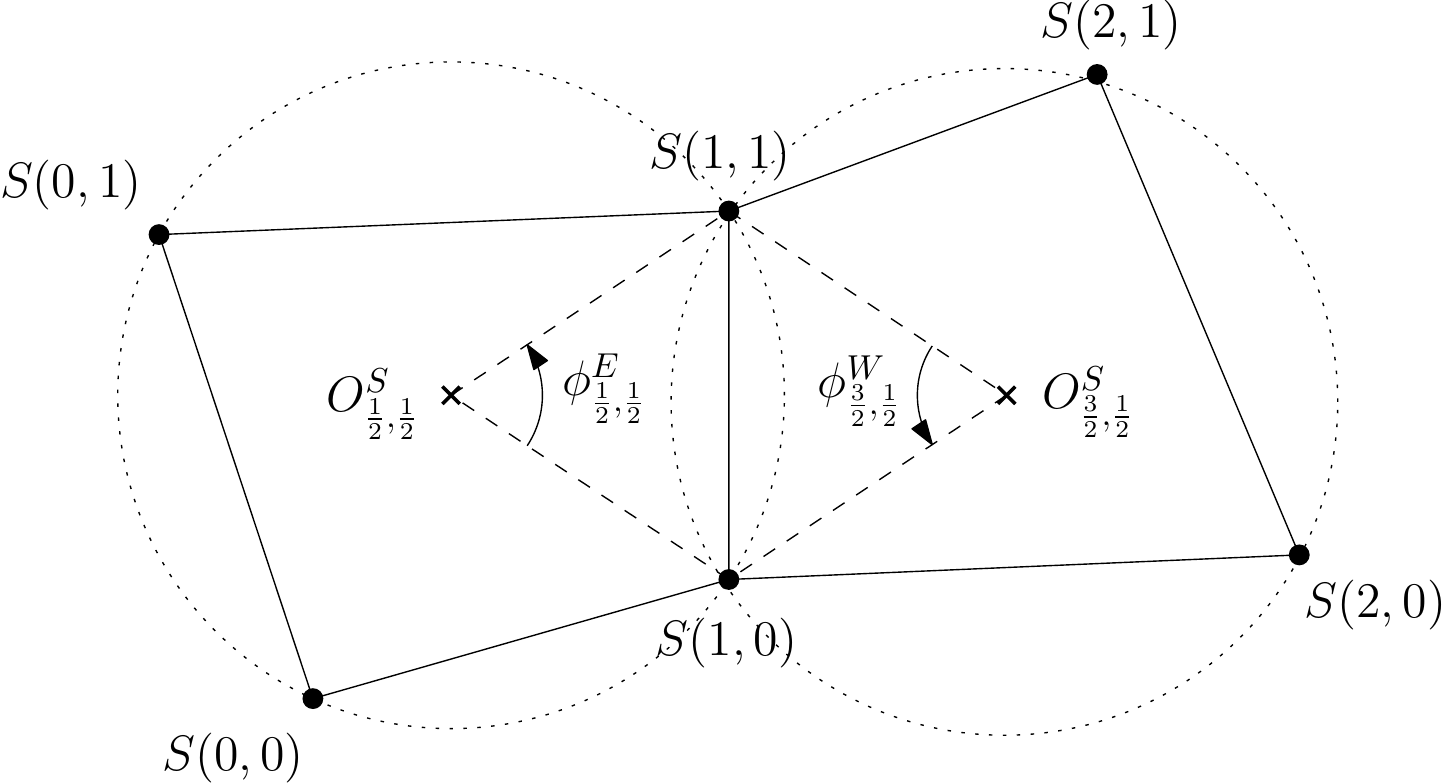}
\caption{The kite around the edge connecting $S(1,0)$ to $S(1,1)$ is a rhombus when the two circles have the same radius.}
\label{fig:rhombus}
\end{figure}

As a consequence of Lemma~\ref{lem:isoradialphi}, we observe that for isoradial patterns, $\phi$ angles are naturally associated with edges (the $\phi$ angle of an edge is the $\phi$ angle of the rhombus surrounding it). Using this characterization of isoradial patterns, we deduce the following :

\begin{proposition}
\label{prop:isoradialstable}
The subset $\calI_{m,n}^s$ is stable under Miquel dynamics. Furthermore, Miquel dynamics acts on the $\phi$ variables by a shift : whenever a face mutates, the $\phi$ variables associated with its top and bottom edges (resp. left and right edges) are exchanged.
\end{proposition}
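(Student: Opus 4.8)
The plan is to prove both assertions at once by substituting the isoradiality relations into the recurrence formulas of Theorem~\ref{thm:recurrenceformulas} and reading off the mutated $\phi$ variables explicitly. Working around the black face $F_{\frac12,\frac12}$ in the notation of Figure~\ref{fig:XYrenamed}, Lemma~\ref{lem:isoradialphi} says that isoradiality is equivalent to the four identifications $X_N=Y_N$, $X_W=Y_W$, $X_S=Y_S$, $X_E=Y_E$, each expressing that the two $\phi$ angles seen across a common edge coincide. I would combine these with the face-flatness relation~\eqref{eq:faceflat}, which in the variables $X$ reads $X_NX_WX_SX_E=1$ since $X_D=e^{i\phi_D}$ and the four angles sum to $0$ modulo $2\pi$.

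First I would compute the mutated white-face variable $Y_N'$ from~\eqref{eq:newY}. Under the identifications above each paired factor collapses---using $\tfrac{1-X^{-1}}{1-X}=-X^{-1}$---to a monomial in the $X$'s, and after absorbing the $Y_N$ prefactor one is left with $Y_N'=X_N^{-1}X_W^{-1}X_E^{-1}$; the flatness relation $X_NX_WX_SX_E=1$ then turns this into $Y_N'=X_S$. The analogous computation for the three other white faces surrounding the black face (which follows by the same symmetry used to obtain the remaining recurrence formulas) yields $Y_W'=X_E$, $Y_S'=X_N$ and $Y_E'=X_W$: each white-face angle facing the black face is replaced by the angle of the opposite edge.

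Next I would feed $Y_N'=X_S$ and $Y_W'=X_E$ (the latter being exactly the auxiliary quantity flagged in the remark after Theorem~\ref{thm:recurrenceformulas}) into~\eqref{eq:newX}. Using $\tfrac{1-X^{-1}}{1-X}=-X^{-1}$ and $\tfrac{1-X}{1-X^{-1}}=-X$, the numerator and denominator each reduce, after invoking $X_NX_WX_SX_E=1$, to a common multiple of $X_NX_W-1$, and that factor cancels to give $X_N'=(X_NX_WX_E)^{-1}=X_S$. Thus $X_N'=Y_N'=X_S$: the north edge of the mutated black face is seen under the same angle from its new circumcenter and from the unchanged white circumcenter, so the rhombus condition~\eqref{eq:isoradialphi1}--\eqref{eq:isoradialphi2} of Lemma~\ref{lem:isoradialphi} is restored on that edge. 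Since the identical statement holds for every edge of every black face (by symmetry and biperiodicity), $\mu_B(S)$ is again isoradial, and $\mu_W$ is treated symmetrically; hence $\calI_{m,n}^s$ is stable under Miquel dynamics. The explicit values $X_N'=X_S$, $X_S'=X_N$, $X_W'=X_E$, $X_E'=X_W$ are precisely the claimed exchange of the $\phi$ variables of the top/bottom (resp. left/right) edges.

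The main obstacle is the simplification of~\eqref{eq:newX}: unlike~\eqref{eq:newY} it is a genuinely degree-three rational expression that also requires the already-mutated inputs $Y_N'$ and $Y_W'$, so the white-face formulas must be resolved first and the cancellation down to $X_S$ hinges entirely on the flatness relation $X_NX_WX_SX_E=1$. The remaining points---tracking the branch and orientation conventions relating the oriented angles $\angle\,\cdot$ modulo $2\pi$ to the unimodular variables $X$, and checking that the distinctness conditions defining an SGCP survive the mutation---are routine, and I would address them only briefly.
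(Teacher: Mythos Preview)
Your proposal is correct and follows essentially the same route as the paper: substitute the isoradial identifications $X_D=Y_D$ and the face-flatness relation $X_NX_WX_SX_E=1$ into~\eqref{eq:newY} and~\eqref{eq:newX} to obtain $Y_N'=X_N'=X_S$, then invoke symmetry for the remaining directions. The paper states this in a single sentence without spelling out the algebra you outline, but the argument is the same.
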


\begin{proof}
We use the notation of Theorem~\ref{thm:recurrenceformulas} and assume that for any $D\in\calD$, we have $X_D=Y_D$. Then it follows from equations~\eqref{eq:newY} and~\eqref{eq:newX} and from $X_NX_WX_SX_E=1$ that $Y_N'=X_N'=X_S$. By symmetry, we also have $Y_W'=X_W'=X_E$, $Y_S'=X_S'=X_N$ and $Y_E'=X_E'=X_W$.
\end{proof}

In particular, Proposition~\ref{prop:isoradialstable} implies that Miquel dynamics is periodic of period
\[
\lcm\left(m,\tfrac{mn}{\gcd(m,s)}\right).
\]
Most of the time, $\calI_{m,n}^s$ is a strict subset of $\calS_{m,n}^s$ but in some cases there is equality :

\begin{lemma}
\label{lem:mby1}
If $n=1$ and $s\in\left\{1,m-1\right\}$, then $\calI_{m,n}^s=\calS_{m,n}^s$.
\end{lemma}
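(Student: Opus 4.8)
The plan is to prove the equivalent statement that every $S\in\calS_{m,1}^s$ has all of its circles of one common radius, which is exactly the definition of being isoradial (so I will not even need the characterization of Lemma~\ref{lem:isoradialphi}). I will carry this out for $s=1$; the case $s=m-1$ then follows by applying the reflection $\sigma\colon(x,y)\mapsto(-x,y)$, which takes a pattern in $\calS_{m,1}^{m-1}$ to one in $\calS_{m,1}^{1}$ (the second period $(m-1,1)$ becomes $(1-m,1)\equiv(1,1)$ modulo $(m,0)$), preserves the checkerboard coloring, and, being a similarity, preserves circle radii; hence $S$ is isoradial iff $S\circ\sigma$ is.

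So fix $S\in\calS_{m,1}^1$, abbreviate $\phi_k^D:=\phi_{k+\frac12,\frac12}^D$, and let $R_k$ be the circumradius of the face $F_{k+\frac12,\frac12}$ for $0\le k\le m-1$. By biperiodicity every circle of $S$ is a translate of one of these $m$ circles, so it suffices to show $R_0=\cdots=R_{m-1}$. The crucial observation is that the second period vector is $(s,1)=(1,1)$, so the associated monodromy $\vec u$ obeys $S(x+1,y+1)=S(x,y)+\vec u$ for all $(x,y)$; in particular the south-west-to-north-east diagonal of the $k$-th face is $S(k+1,1)-S(k,0)=\vec u$ for \emph{every} $k$, the same vector, and $\vec u\neq 0$ because the four corners of a face are pairwise distinct. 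Reading this diagonal as a chord of $\calC_k$, and noting that its endpoints $S(k,0)$ and $S(k+1,1)$ are separated at the circumcenter by the oriented angle $\phi_k^S+\phi_k^E$, the chord-length formula gives $|\vec u|=2R_k\,\bigl|\sin\tfrac{D_k}{2}\bigr|$ with $D_k:=\phi_k^S+\phi_k^E$. Since $\vec u\neq 0$, this forces $\sin\tfrac{D_k}{2}\neq 0$ and $R_k=|\vec u|\big/\bigl(2\bigl|\sin\tfrac{D_k}{2}\bigr|\bigr)$.

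It remains to see that $\bigl|\sin\tfrac{D_k}{2}\bigr|$ is independent of $k$, and here I would invoke only two of the relations defining $\calA_{m,1}^1$. Face flatness \eqref{eq:faceflat} for the $(k+1)$-st face gives $D_{k+1}=\phi_{k+1}^S+\phi_{k+1}^E\equiv-(\phi_{k+1}^N+\phi_{k+1}^W)\pmod{2\pi}$. The shifted parallel relation \eqref{eq:verticalparallelshift}, specialised to $n=1$, $s=1$ and $i_0=k+\tfrac12$ (and using biperiodicity to rewrite its lower-row $W$-term as $\phi_{k+1}^W$), collapses to $\phi_{k+1}^W+\phi_{k+1}^N\equiv\phi_k^E+\phi_k^S=D_k\pmod{2\pi}$. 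Combining the two yields $D_{k+1}\equiv-D_k\pmod{2\pi}$, hence $D_k\equiv(-1)^kD_0\pmod{2\pi}$ and $\sin\tfrac{D_k}{2}=\pm\sin\tfrac{D_0}{2}$. Thus $\bigl|\sin\tfrac{D_k}{2}\bigr|$ is constant, all $R_k$ coincide, and $S$ is isoradial.

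The step I expect to be the main obstacle is the geometric bookkeeping of the second and third paragraphs: one must correctly recognise that the relevant chord is precisely the common monodromy vector $\vec u$ and that the central angle it subtends is $\phi_k^S+\phi_k^E$ (so that face flatness and the parallel relation apply verbatim), and one must verify that the periodicity substitutions hidden in \eqref{eq:verticalparallelshift} genuinely reduce it to $\phi_{k+1}^W+\phi_{k+1}^N\equiv D_k$. Once the diagonal is identified with $\vec u$, everything else is immediate; in particular no appeal to the product-of-sines monodromy relations is needed, only face flatness, the parallel relation, and the distinctness of the face corners.
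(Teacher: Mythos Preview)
Your argument is correct, and it is a genuinely different proof from the one in the paper.

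The paper gives a purely elementary congruence argument: for each $x$ it observes (using only the $(1,1)$-monodromy) that the point reflection through the midpoint of the edge $[S(x,0),S(x,1)]$ carries the triangle $S(x-1,0)S(x,0)S(x,1)$ onto the triangle $S(x+1,1)S(x,1)S(x,0)$, so adjacent circumcircles are congruent. No $\phi$-coordinates or relations from Section~\ref{sec:coordinates} are invoked.

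Your route instead uses the common SW--NE diagonal $\vec u$ of every face as a chord, the chord-length formula $|\vec u|=2R_k\,|\sin(D_k/2)|$, and then the coordinatization relations \eqref{eq:faceflat} and \eqref{eq:verticalparallelshift} (the latter collapsing, after the biperiodicity substitution $\phi_{i_0,-\frac12}^W=\phi_{i_0+1,\frac12}^W$, to $\phi_{k+1}^W+\phi_{k+1}^N\equiv D_k$) to conclude $D_{k+1}\equiv-D_k$ and hence that $|\sin(D_k/2)|$ is constant. Each of these steps checks out exactly as you wrote them. The paper's argument is shorter and independent of Section~\ref{sec:coordinates}; your argument has the virtue of showing that the isoradiality is already a formal consequence of the $\phi$-relations, which is a nice internal consistency check on the coordinatization. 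One small quibble: your reduction of $s=m-1$ to $s=1$ via $\sigma:(x,y)\mapsto(-x,y)$ is a precomposition on the domain, not a similarity in the target; the reason it preserves isoradiality is simply that the circles of $S\circ\sigma$ are the very same circles as those of $S$, reindexed.
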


\begin{proof}
We assume $s=1$, the case $s=m-1$ is similar. Fix $S\in\calS_{m,1}^1$. We will show that for any $1\leq x\leq m-1$, the triangles $S(x-1,0)S(x,0)S(x,1)$ and $S(x+1,1)S(x,1)S(x,0)$ are isometric, hence their circumcircles have the same radius. Thus the circumcircles of all the faces have the same radius. Let $P$ be the midpoint of the segment $\left[S(x,0)S(x,1)\right]$ and let $\sigma_P$ denote the reflection across $P$. Given that $S\in\calS_{m,1}^1$, we have $\overrightarrow{S(x-1,0)S(x,0)}=\overrightarrow{S(x,1)S(x+1,1)}$. Since we also have $\overrightarrow{PS(x,1)}+\overrightarrow{PS(x,0)}=\overrightarrow{0}$, we deduce that $\overrightarrow{PS(x+1,1)}+\overrightarrow{PS(x-1,0)}=\overrightarrow{0}$. So $\sigma_P$ maps the triangle $S(x-1,0)S(x,0)S(x,1)$ to the triangle $S(x+1,1)S(x,1)S(x,0)$.
\end{proof}

By Proposition~\ref{prop:isoradialstable}, Miquel dynamics is trivial on $\calI_{m,n}^s$. So by Lemma~\ref{lem:mby1}, we have a complete understanding of Miquel dynamics when $n=1$ and $s\in\left\{1,m-1\right\}$. This includes the cases $(m,n)=(2,1)$ and $(m,n)=(4,1)$.

\section{Some conserved quantities}
\label{sec:conserved}

In this section we discuss two types of conserved quantities.

\subsection{Monodromy invariants}
\label{subsec:monodromyconserved}

The first conserved quantities are related to the monodromies of a biperiodic circle pattern. Let $S\in \calS_{m,n}^s$ with monodromy $\vec{u}=(u_x,u_y)$ (resp. $\vec{v}=(v_x,v_y)$) in the direction $(m,0)$ (resp. $(s,n)$). Then the iterates of $S$ under Miquel dynamics have the same monodromies, because the elementary operations used by the dynamics (reflection and ``taking the circumcenter of three points'') commute with translations. If we consider instead the equivalence class of $S$ in $\calM_{m,n}^s$, then the pair of vectors $\left(\vec{u},\vec{v}\right)$ is only well-defined up to a common similarity acting on both of them, hence the conserved quantities are the real and imaginary parts of $\tfrac{v_x+iv_y}{u_x+iu_y}$ i.e. the pair $\left(\tfrac{u_xv_x+u_yv_y}{u_x^2+u_y^2},\tfrac{u_xv_y-u_yv_x}{u_x^2+u_y^2}\right)$. This holds for generic elements of $\calM_{m,n}^s$, for which $\vec{u}\neq\vec{0}$.

\subsection{Intersection angles along dual loops}
\label{subsec:anglesconserved}

The second type of conserved quantities is the sum of the exterior intersection angles along a loop in the dual graph.

Fix $m,n$ and $s$. We denote by $G$ the graph obtained by projecting $\Z^2$ down to the torus $\T$ with a fundamental domain of size $m$ by $n$ and glued with a horizontal shift of $s$. We construct $G^*$ the dual graph of $G$ by creating a dual vertex for each face of the primal graph $G$ and by connecting two dual vertices by a dual edge if the corresponding two faces in the primal graph share an edge. A dual vertex is colored black (resp. white) if the corresponding face in $G$ is colored black (resp. white), see Figure~\ref{fig:dualgraph}.

Fix $S\in\calS_{m,n}^s$. To each undirected edge $e$ of $G^*$, we associate the exterior intersection angle $\theta_S(e)\in[0,2\pi)$  of the circles associated with the two endpoints of $e$. If $e$ is a horizontal edge, connecting $(i,j)$ to $(i+1,j)$, set
\[
\theta_S(e)=\angle O_{i+1,j}^S S\left(i+\frac{1}{2},j-\frac{1}{2}\right) O_{i,j}^S.
\]
If $e$ is a vertical edge, connecting $(i,j)$ to $(i,j+1)$, set
\[
\theta_S(e)=\angle O_{i,j}^S S\left(i-\frac{1}{2},j+\frac{1}{2}\right) O_{i,j+1}^S.
\]
Observe that these angles can be expressed using the $\phi$ angles associated with $S$ : if $e$ connects $(i,j)$ to $(i+1,j)$, then
\[
\theta_S(e)=\pi-\frac{1}{2}\left(\phi_{i,j}^E+\phi_{i+1,j}^W\right)
\]
while if $e$ connects $(i,j)$ to $(i,j+1)$, then
\[
\theta_S(e)=\pi-\frac{1}{2}\left(\phi_{i,j}^N+\phi_{i,j+1}^S\right).
\]
In particular, it follows from equation~\eqref{eq:faceflat} that for any vertex $(x,y)$ in $G$,
\begin{equation}
\label{eq:thetaflat}
\sum_{e\in E_{(x,y)}} \theta_S(e) \equiv 0 \mod 2\pi,
\end{equation}
where $E_{(x,y)}$ denotes the set of four dual edges surrounding the dual face corresponding to the vertex $(x,y)$.

We also define a sign function $\epsilon$ on the set of directed edges of $G^*$. For any directed edge $\vec{e}$ of $G^*$, we set $\epsilon(\vec{e})$ to be $+1$ if $\vec{e}$ is vertical and goes from a black vertex to a white vertex or if $\vec{e}$ is horizontal and goes from a white vertex to a black vertex. Otherwise, we set $\epsilon(\vec{e})$ to be $-1$.

\begin{figure}[htpb]
\centering
\includegraphics[height=1.2in]{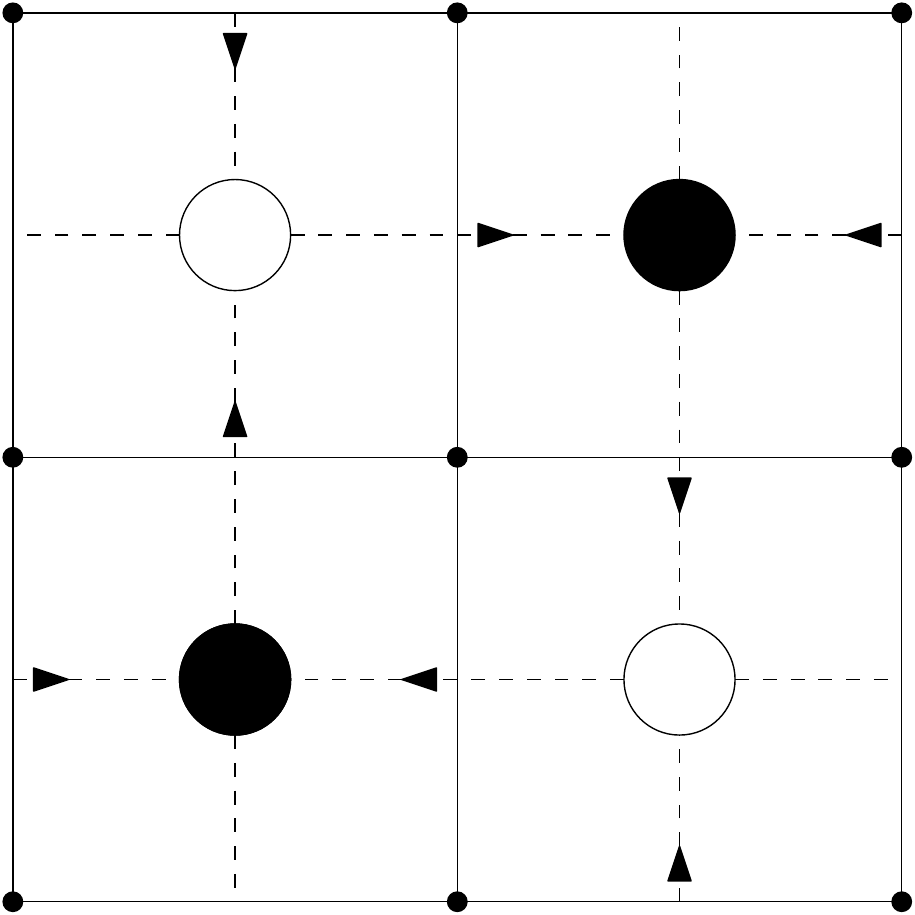}
\caption{The dual graph $G^*$ when $m=n=2$ and $s=0$. Its edges are represented by dashed lines and carry the orientation corresponding to $\epsilon=+1$.}
\label{fig:dualgraph}
\end{figure}

A directed loop $l$ in $G^*$ is given by a finite sequence $(\overrightarrow{e_1},\ldots,\overrightarrow{e_{2k}})$ of directed dual edges such that the head of $\overrightarrow{e_h}$ equals the tail of $\overrightarrow{e_{h+1}}$ for all $1\leq h\leq 2k-1$ and the head of $\overrightarrow{e_{2k}}$ equals the tail of $\overrightarrow{e_1}$.
We define the signed sum of the exterior intersection angles associated with a dual loop. Given a directed loop $l$ in $G^*$, we set
\begin{equation}
\tilde{\gamma}_S(l)=\sum_{\vec{e}\in l} \epsilon(\vec{e}) \theta_S(e) \mod 2\pi,
\end{equation}
where $e$ denotes the undirected edge in $G^*$ corresponding to $\vec{e}$.

When $l$ is the boundary of a dual face (directed loop of length four), equation~\eqref{eq:thetaflat} implies that $\tilde{\gamma}_S(l)=0$. More generally, for any directed loop $l$ having zero homology in $H_1(\T,\Z)$, we have $\tilde{\gamma}_S(l)=0$. Thus, there exists a unique group homomorphism
$\gamma_S:H_1(\T,\Z)\rightarrow \R/(2\pi\Z)$ such that for any directed loop $l$ in $G^*$ with homology class $\left[l\right]$ in $H_1(\T,\Z)$,
\[
\gamma_S(\left[l\right])=\tilde{\gamma}_S(l).
\]
This homomorphism $\gamma_S$ is (essentially) conserved by Miquel dynamics:

\begin{theorem}
\label{thm:gammaconserved}
For any $S\in \calS_{m,n}^s$, we have
\begin{equation}
\gamma_{\mu_B(S)}=\gamma_{\mu_W(S)}=-\gamma_S.
\end{equation}
\end{theorem}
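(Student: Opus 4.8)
The plan is to reduce the statement to one local identity and then telescope. Since $\gamma_S$ is a homomorphism on $H_1(\T,\Z)$, it is determined by its values on dual loops, so it suffices to prove $\tilde\gamma_{\mu_B(S)}(l)\equiv-\tilde\gamma_S(l) \mod 2\pi$ for every directed dual loop $l$; the statement for $\mu_W$ will follow by running the same argument with the two colours exchanged. Setting $\Theta(e):=\theta_S(e)+\theta_{\mu_B(S)}(e)$ for each (undirected) dual edge $e$, the identity to prove becomes
\[
\tilde\gamma_{\mu_B(S)}(l)+\tilde\gamma_S(l)=\sum_{\vec e\in l}\epsilon(\vec e)\,\Theta(e)\equiv 0 \mod 2\pi .
\]
Since $G^*$ is bipartite, $l$ alternates between black and white dual vertices, a feature I will use at the very end.

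The crux is a local relation around each black face, which I expect to be the main obstacle. I would fix a black face and one of its corners and run the kite computation of the proof of Theorem~\ref{thm:recurrenceformulas} twice, once for $S$ and once for $S'=\mu_B(S)$, producing the two half-angle relations~\eqref{eq:sumhalfanglesS} and~\eqref{eq:sumhalfanglesT}. The key point is that when these are added, the angles $\angle A'O_1A$ and $\angle AO_2A'$ recording the displacement of the shared vertex cancel (each occurs with both orientations), leaving a relation purely among the old and new $\phi$-angles of the black face and of the two white faces adjacent at that corner. Rewriting it through the formulas expressing $\theta_S(e)$ as $\pi$ minus half the sum of the two $\phi$-angles of the faces bordering $e$, this surviving relation is exactly $\Theta(e_1)+\Theta(e_2)\equiv 0 \mod 2\pi$, where $e_1,e_2$ are the two edges of the black face meeting at the chosen corner. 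Applying this at all four corners (the mutation is invariant under the rotational symmetry of the square) gives $\Theta(N)+\Theta(W)\equiv\Theta(N)+\Theta(E)\equiv\Theta(E)+\Theta(S)\equiv\Theta(S)+\Theta(W)\equiv 0$, so that around each black face $\Theta$ takes a single value $t_b$ on the north and south edges and $-t_b$ on the east and west edges.

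It then remains to telescope. Each dual edge borders a unique black face, and the previous step shows $\Theta(e)\equiv t_{b(e)}$ for vertical edges and $\Theta(e)\equiv -t_{b(e)}$ for horizontal edges. Checking the definition of $\epsilon$ in all four cases (vertical/horizontal against black/white tail) yields $\epsilon(\vec e)\,\Theta(e)\equiv +t_{b(e)}$ whenever $\vec e$ leaves a black vertex and $\equiv -t_{b(e)}$ whenever it leaves a white vertex. Writing the colour-alternating loop as $b^{(1)}\to w^{(1)}\to b^{(2)}\to\cdots\to b^{(k)}\to w^{(k)}\to b^{(1)}$, the edge leaving $b^{(r)}$ contributes $+t_{b^{(r)}}$ and the edge leaving $w^{(r)}$ contributes $-t_{b^{(r+1)}}$, so the sum telescopes to $0 \mod 2\pi$. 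This gives $\tilde\gamma_{\mu_B(S)}(l)\equiv-\tilde\gamma_S(l)$, hence $\gamma_{\mu_B(S)}=-\gamma_S$. The white case is identical once the kite relations are taken around white faces instead, producing the analogous constants $t_w$ and the same telescoping now indexed by white faces.

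The only real work is the second paragraph: correctly matching the six angles occurring in~\eqref{eq:sumhalfanglesS}--\eqref{eq:sumhalfanglesT} with the old and new $\phi$-variables of the three faces at the corner, verifying the displacement-angle cancellation, and confirming the four-corner version. Once the resulting relation $\Theta(e_1)+\Theta(e_2)\equiv 0$ is in hand, everything downstream is formal bookkeeping with the signs $\epsilon$. I also expect to lean on the vertex-flatness underlying~\eqref{eq:faceflat} only implicitly, through the $\theta$-versus-$\phi$ expressions already recorded in Section~\ref{sec:conserved}.
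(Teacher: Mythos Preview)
Your plan is correct and lands on the same local identity the paper uses: for the two dual edges $e_1,e_2$ of a black face meeting at a given corner, $\Theta(e_1)+\Theta(e_2)\equiv 0\bmod 2\pi$. Your downstream bookkeeping (extracting a constant $t_b$ per black face and telescoping along the colour-alternating loop) is a clean repackaging of the paper's pairwise argument with its two sign cases; both are straightforward once the local identity is in hand.

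Where you differ is in how you obtain that identity. You propose to sum the two kite relations~\eqref{eq:sumhalfanglesS} and~\eqref{eq:sumhalfanglesT} from the proof of Theorem~\ref{thm:recurrenceformulas}; this does work, and indeed the paper already records the resulting half-sum of eight $\phi$-angles, which translates directly into $\Theta(e_N)+\Theta(e_W)\equiv 0$. But the paper's own proof of Theorem~\ref{thm:gammaconserved} gets there in two lines, without any recourse to Section~\ref{sec:recurrenceformulas}: since $\theta_S(e_1)+\theta_S(e_2)$ at the corner $A=S(0,1)$ collapses by Chasles to the single angle $\angle O_{-\frac12,\frac12}^S\,A\,O_{\frac12,\frac32}^S$, and likewise for $S'=\mu_B(S)$ with the same two white centers and the reflected point $A'$, the reflection defining $\mu_B$ immediately gives that these two angles are opposite modulo $2\pi$. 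So the paper's route is shorter and more elementary (it uses only the definition of $\mu_B$ as a reflection through the line joining the two adjacent white centers), whereas your route, while valid, imports the heavier kite machinery unnecessarily.
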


\begin{proof}
It suffices to prove that for any directed dual loop $l$, we have $\tilde{\gamma}_{\mu_B(S)}(l)=-\tilde{\gamma}_S(l)$ and $\tilde{\gamma}_{\mu_W(S)}(l)=-\tilde{\gamma}_S(l)$. We will prove the first statement.
Let $l$ be a directed dual loop. Up to a cyclic permutation of its edges, we may write $l$ as a sequence of consecutive directed dual edges $(\overrightarrow{e_1},\ldots,\overrightarrow{e_{2k}})$ such that for any $1\leq h\leq k$, the head of $\overrightarrow{e_{2h}}$ is a white dual vertex. We will show that for any $1\leq h\leq k$, we have
\begin{multline}
\epsilon(\overrightarrow{e_{2h-1}})\theta_S(e_{2h-1})+\epsilon(\overrightarrow{e_{2h}})\theta_S(e_{2h}) + \\
\epsilon(\overrightarrow{e_{2h-1}})\theta_{\mu_B(S)}(e_{2h-1})+\epsilon(\overrightarrow{e_{2h}})\theta_{\mu_B(S)}(e_{2h}) \equiv 0 \mod 2 \pi.
\end{multline}
Summing over $h$ yields $\tilde{\gamma}_S(l)+\tilde{\gamma}_{\mu_B(S)}(l)=0$. We distinguish two cases.

\emph{Case 1.}
We first assume that $\epsilon(\overrightarrow{e_{2h-1}})=\epsilon(\overrightarrow{e_{2h}})$. We need to show that
\begin{equation}
\label{eq:case1}
\theta_S(e_{2h-1})+\theta_S(e_{2h})+\theta_{\mu_B(S)}(e_{2h-1})+\theta_{\mu_B(S)}(e_{2h}) \equiv 0 \mod 2 \pi.
\end{equation}
To simplify notations, we assume (without loss of generality) that $h=1$, that $\overrightarrow{e_1}$ connects $\left(-\tfrac{1}{2},\tfrac{1}{2}\right)$ to $\left(\tfrac{1}{2},\tfrac{1}{2}\right)$ and $\overrightarrow{e_2}$ connects $\left(\tfrac{1}{2},\tfrac{1}{2}\right)$ to $\left(\tfrac{1}{2},\tfrac{3}{2}\right)$. Write $S'=\mu_B(S)$. Then we have the following formulas for the $\theta$ angles of $S$ and $S'$ on edges $e_1$ and $e_2$:
\begin{align*}
\theta_S(e_1)&=\angle O_{-\frac{1}{2},\frac{1}{2}}^S S(0,1) O_{\frac{1}{2},\frac{1}{2}}^S \\
\theta_S(e_2)&=\angle O_{\frac{1}{2},\frac{1}{2}}^S S(0,1) O_{\frac{1}{2},\frac{3}{2}}^S \\
\theta_{S'}(e_1)&=\angle O_{-\frac{1}{2},\frac{1}{2}}^{S'} S'(0,1) O_{\frac{1}{2},\frac{1}{2}}^{S'} \\
\theta_{S'}(e_2)&=\angle O_{\frac{1}{2},\frac{1}{2}}^{S'} S'(0,1) O_{\frac{1}{2},\frac{3}{2}}^{S'}
\end{align*}

\begin{figure}[htbp]
\centering
\subfloat[Angles in the pattern $S$.]{\label{fig:gammaconserved1}\includegraphics[height=1.2in]{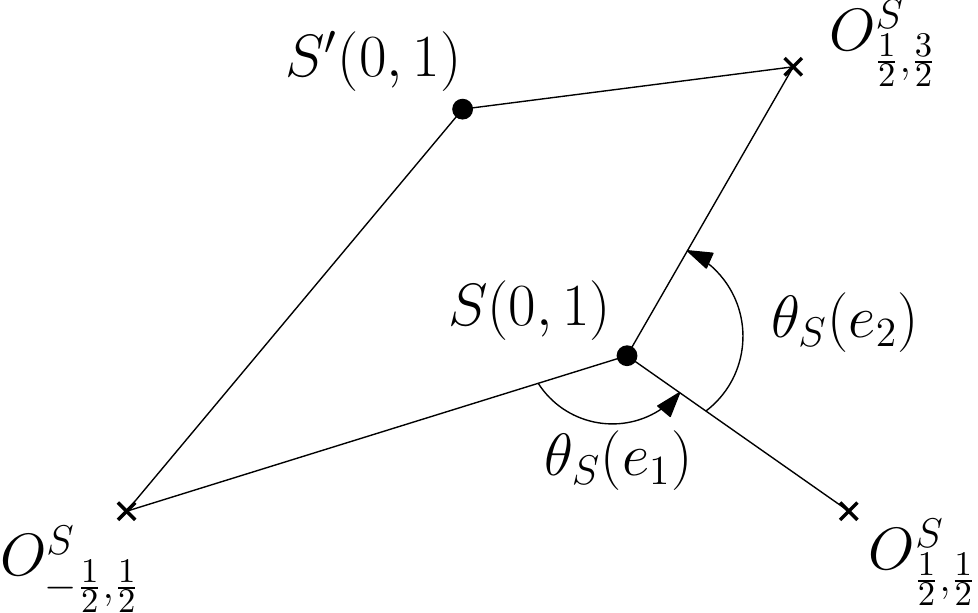}}
\hspace{2em}
\subfloat[Angles in the pattern $S'$.]{\label{fig:gammaconserved2}\includegraphics[height=1.2in]{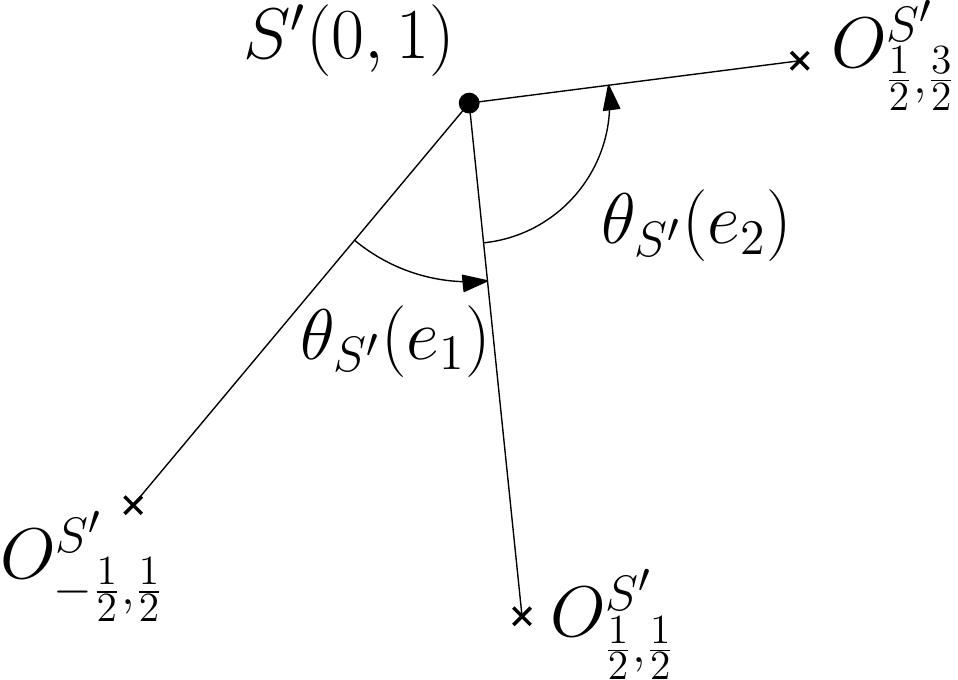}}
\caption{Equation~\eqref{eq:case1} follows from opposite angles being equal in the kite $O_{-\frac{1}{2},\frac{1}{2}}^S S(0,1) O_{\frac{1}{2},\frac{3}{2}}^S S'(0,1)$.}
\label{fig:gammaconserved}
\end{figure}

Since white circles are invariant under black mutation $\mu_B$, $O_{-\frac{1}{2},\frac{1}{2}}^S=O_{-\frac{1}{2},\frac{1}{2}}^{S'}$ and $O_{\frac{1}{2},\frac{3}{2}}^S=O_{\frac{1}{2},\frac{3}{2}}^{S'}$. Thus
\begin{multline*}
\theta_S(e_1)+\theta_S(e_2)+\theta_{S'}(e_1)+\theta_{S'}(e_2) \\
\equiv\angle O_{-\frac{1}{2},\frac{1}{2}}^S S(0,1) O_{\frac{1}{2},\frac{3}{2}}^S + \angle O_{-\frac{1}{2},\frac{1}{2}}^S S'(0,1) O_{\frac{1}{2},\frac{3}{2}}^S \equiv 0 \mod 2\pi,
\end{multline*}
because $S'(0,1)$ is the reflection of $S(0,1)$ through the line $\left(O_{-\frac{1}{2},\frac{1}{2}}^S,O_{\frac{1}{2},\frac{3}{2}}^S\right)$.

\emph{Case 2.} Now assume we have $\epsilon(\overrightarrow{e_{2h-1}})=-\epsilon(\overrightarrow{e_{2h}})$. In this case, we need to show that
\begin{equation}
\label{eq:case2}
\theta_S(e_{2h-1})-\theta_S(e_{2h})+\theta_{\mu_B(S)}(e_{2h-1})-\theta_{\mu_B(S)}(e_{2h}) \equiv 0 \mod 2 \pi.
\end{equation}
We can find two directed edges $\overrightarrow{e_{2h-1}'}$ and $\overrightarrow{e_{2h}'}$ such that the following conditions hold :
\begin{enumerate}
\item $\overrightarrow{e_{2h-1}'}$ and $\overrightarrow{e_{2h}'}$ have the same underlying undirected edge, but have opposite orientations ;
\item The head of $\overrightarrow{e_{2h-1}}$ is equal to the tail of $\overrightarrow{e_{2h}'}$ ;
\item $\epsilon(\overrightarrow{e_{2h-1}})=\epsilon(\overrightarrow{e_{2h}'})$.
\end{enumerate}
It follows from these conditions that $\epsilon(\overrightarrow{e_{2h-1}'})=\epsilon(\overrightarrow{e_{2h}})$. See Figure~\ref{fig:case2} for an example.

\begin{figure}[htpb]
\centering
\includegraphics[height=1.2in]{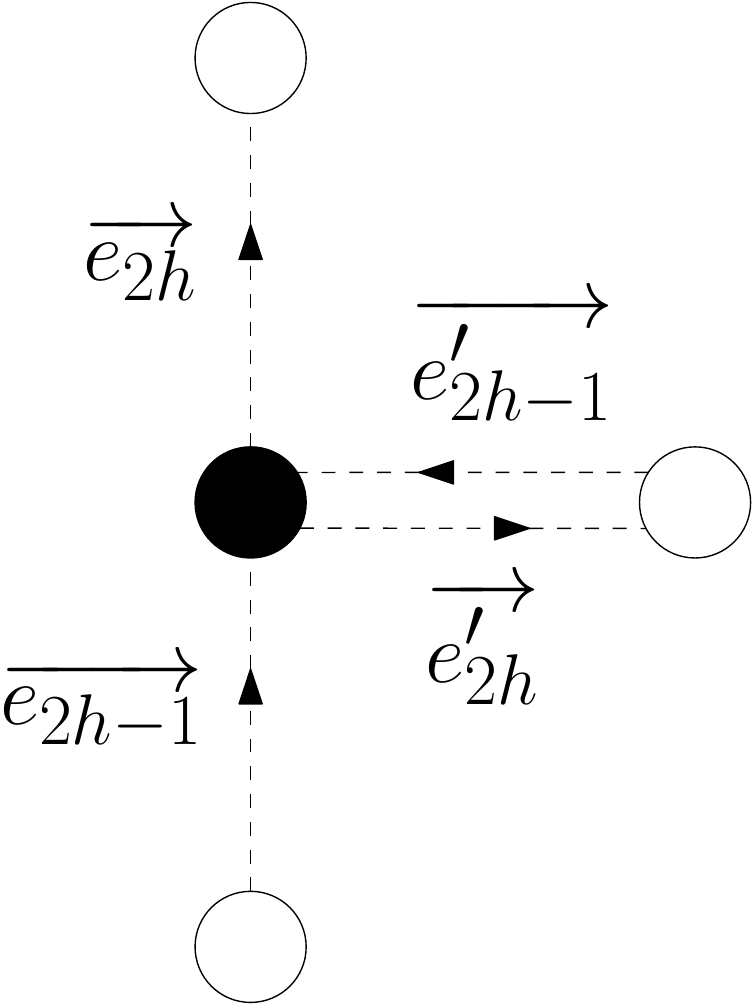}
\caption{A possible choice of $\protect\overrightarrow{e_{2h-1}'}$ and $\protect\overrightarrow{e_{2h}'}$ in case 2. They correspond to the two directions of a common dual edge.}
\label{fig:case2}
\end{figure}

Using case 1, we deduce that
\begin{align}
\theta_S(e_{2h-1})+\theta_S(e_{2h}')+\theta_{\mu_B(S)}(e_{2h-1})+\theta_{\mu_B(S)}(e_{2h}') &\equiv 0 \mod 2 \pi \label{eq:case2-1} \\
\theta_S(e_{2h-1}')+\theta_S(e_{2h})+\theta_{\mu_B(S)}(e_{2h-1}')+\theta_{\mu_B(S)}(e_{2h}) &\equiv 0 \mod 2 \pi \label{eq:case2-2}
\end{align}
Since $e_{2h-1}'=e_{2h}'$, taking the difference of~\eqref{eq:case2-1} and~\eqref{eq:case2-2} gives~\eqref{eq:case2}.
\end{proof}

\begin{remark}
\label{rem:indepconservedquantitites}
The group homomorphism $\gamma_S$ is determined by the images of two loops generating $H_1(\T,\Z)$.
Thus, Theorem~\ref{thm:gammaconserved} provides at most two independent conserved quantities. We will see in Section~\ref{sec:twobytwo} that in the case $m=n=2$ and $s=0$, the images of two loops generating $H_1(\T,\Z)$ are indeed two independent conserved quantities.
\end{remark}

\section{The $2\times2$ case}
\label{sec:twobytwo}

In this section we provide a description of the trajectory of the vertices in the case when $m=n=2$ (which implies $s=0$).

\subsection{Notation and construction}

Fix a circle pattern $S\in\calS_{2,2}^0$. For simplicity, we will rename the vertices and centers of $S$ as follows : $A=S(0,0)$, $B=S(1,0)$, $C=S(2,0)$, $D=S(0,1)$, $E=S(1,1)$, $F=S(2,1)$, $G=S(0,2)$, $H=S(1,2)$ and $I=S(2,2)$ for the vertices, $\Omega_1=O_{\frac{1}{2},\frac{1}{2}}^{S}$, $\Omega_2=O_{\frac{1}{2},\frac{3}{2}}^{S}$, $\Omega_3=O_{\frac{3}{2},\frac{1}{2}}^{S}$ and $\Omega_4=O_{\frac{3}{2},\frac{3}{2}}^{S}$ for the centers (see Figure~\ref{fig:22notation}). We set $S_0=S$ and for any $t\in\Z$, we denote by $S_t\in\calS_{2,2}^0$ the time $t$ image of $S_0$ under Miquel dynamics. We define $\tilde{S}_t\in\calS_{2,2}^0$ to be the image of $S_t$ under the translation of vector $\overrightarrow{S_t(0,0)S_0(0,0)}$. Vertices of $\tilde{S}_t$ will be denoted by $A_t,\ldots,I_t$. For example, $E_t=\tilde{S}_t(1,1)$.

\begin{figure}[htpb]
\centering
\includegraphics[height=2.5in]{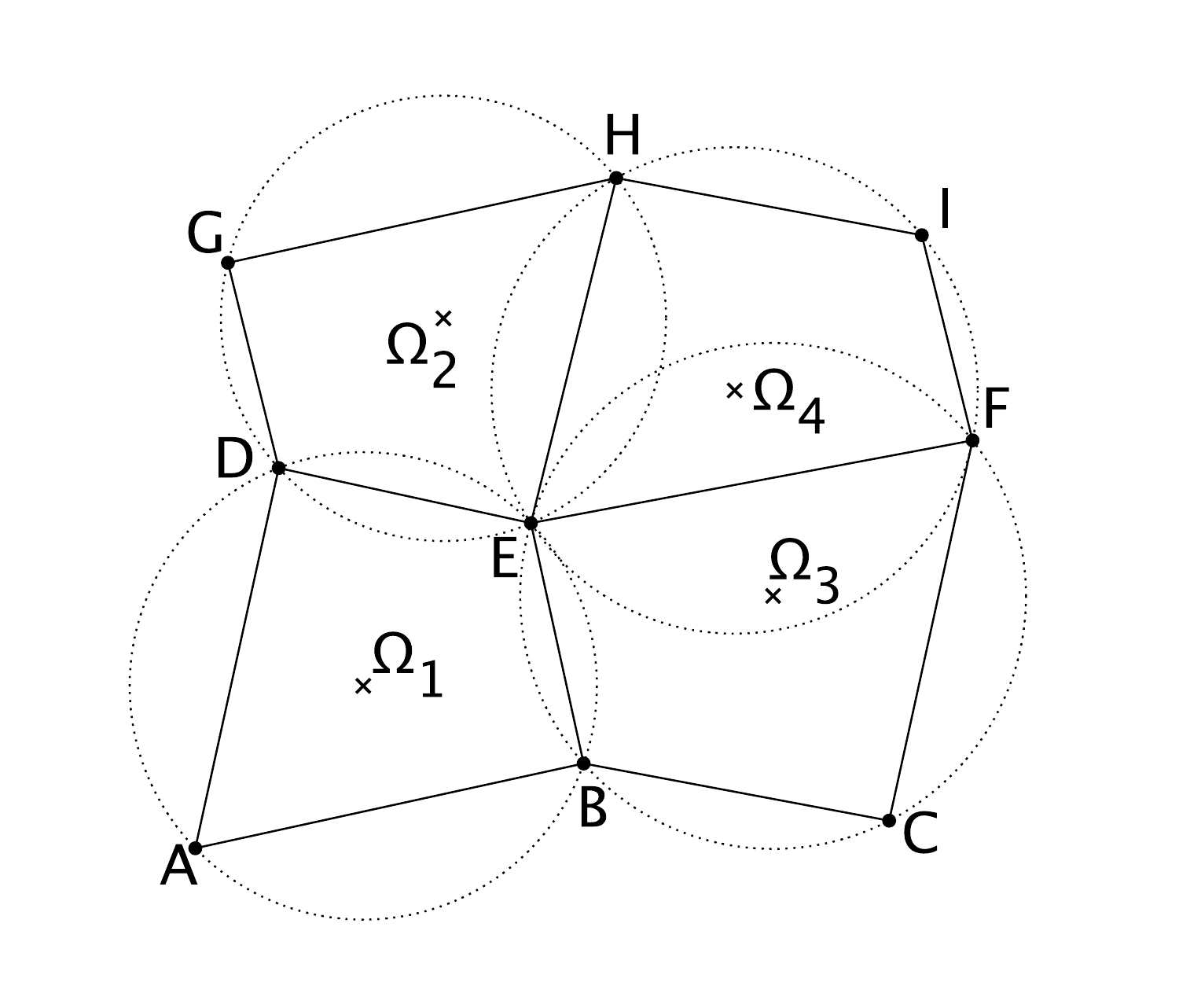}
\caption{Notation for $2\times2$ circle patterns.}
\label{fig:22notation}
\end{figure}

By construction, $A_t$ is independent of $t$. We denote by $\vec{u}$ (resp. $\vec{v}$) the monodromy of $S$ in the direction $(2,0)$ (resp. $(0,2)$). Since the monodromies are preserved by Miquel dynamics (see Subsection~\ref{subsec:monodromyconserved}), this implies that for any $t\in\Z$, $\overrightarrow{A_tC_t}=\overrightarrow{D_tF_t}=\overrightarrow{G_tI_t}=\vec{u}$, that $\overrightarrow{A_tG_t}=\overrightarrow{B_tH_t}=\overrightarrow{C_tI_t}=\vec{v}$ and that the positions of the points $C_t$, $G_t$ and $I_t$ are independent of $t$.

We now provide a practical way to construct circle patterns in the $2\times2$ case. Start with $B,D,F$ and $H$ four points in generic position in the plane. Set $\vec{u}=\overrightarrow{DF}$ and $\vec{v}=\overrightarrow{BH}$. Pick any point $E$ in the plane. Construct $\calC_1$, $\calC_2$, $\calC_3$ and $\calC_4$ to be respectively the circumcircles of the triangles $BDE$, $DEH$, $BEF$ and $EFH$. Construct $A$ as the other intersection point of $\calC_1$ with the circle obtained by translating $\calC_2$ along vector $-\vec{v}$ (the first intersection point being $B$). Set $C=A+\vec{u}$, $G=A+\vec{v}$ and $I=C+\vec{v}$. Then the following proposition characterizes when this procedure gives a biperiodic circle pattern $S$ such that $S(0,0)=A$, $S(1,0)=B$, $S(2,0)=C$, $S(0,1)=D$, $S(1,1)=E$, $S(2,1)=F$, $S(0,2)=G$, $S(1,2)=H$ and $S(2,2)=I$.

\begin{proposition}
\label{prop:hyperbola}
The above construction can be extended to an SGCP $S\in\calS_{2,2}^0$ with monodromy $\vec{u}$ (resp. $\vec{v}$) in the direction $(2,0)$ (resp. $(0,2)$) if and only if $E$ lies on the equilateral hyperbola going through $B,D,F$ and $H$. In the case when $(BH)\perp(DF)$, the condition becomes that $E$ must lie on $(BH)\cup(DF)$.
\end{proposition}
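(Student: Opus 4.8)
The plan is to fix $B,D,F,H$ (hence $\vec u=\overrightarrow{DF}$ and $\vec v=\overrightarrow{BH}$) and to regard the constructed point $A$ as a function $A=A(E)$ of the free point $E$. Because $A$ is defined as the second intersection of $\calC_1$ with $\calC_2-\vec v$, the two faces $F_{\frac12,\frac12}$ and $F_{\frac12,\frac32}$ of the would-be pattern are automatically concyclic: indeed $A\in\calC_1$ by construction, and $A\in\calC_2-\vec v$ means $G=A+\vec v\in\calC_2$. The translation data $C=A+\vec u$, $G=A+\vec v$, $I=A+\vec u+\vec v$ force the prescribed monodromy. Since the non-degeneracy requirements in the definition of an SGCP are open conditions holding for generic $E$, the construction yields an element of $\calS_{2,2}^0$ if and only if the remaining two faces are concyclic, i.e. $C=A+\vec u\in\calC_3$ and $I=A+\vec u+\vec v\in\calC_4$. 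Equivalently, reading the four faces around the vertex $A$, this says that the four circles $\calC_1$, $\calC_2-\vec v$, $\calC_3-\vec u$, $\calC_4-\vec u-\vec v$ are concurrent at $A$.

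I would carry this out in complex coordinates, first normalizing by a similarity (which preserves concyclicity and carries equilateral hyperbolas to equilateral hyperbolas, together with the four base points) so as to cut down the number of free parameters, e.g. placing $D$ at the origin and $\vec u=1$. The map $A(E)$ is then explicit: since $\calC_1$ and $\calC_2-\vec v$ meet at $B$ and at $A$, the point $A$ is the reflection of $B$ across the line joining the two circumcenters $O_1$ (of $BDE$) and $O_2$ (of $B,\,D-\vec v,\,E-\vec v$), which is a rational expression in $E$ and $\bar E$. Substituting $A(E)$ into the concyclicity condition for $F_{\frac32,\frac12}$, expressed as the reality of the cross-ratio of $B,C,F,E$, yields one real-algebraic equation $\Phi_3(E,\bar E)=0$; the analogous substitution for $F_{\frac32,\frac32}$ yields $\Phi_4(E,\bar E)=0$.

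The heart of the proof, and the step I expect to be the main obstacle, is to show after clearing denominators that each $\Phi_j$ is a polynomial of degree two whose $E\bar E$-coefficient vanishes, and that each vanishes at $B,D,F,H$. Writing a real conic in complex form as $\alpha E^2+\bar\alpha\,\bar E^2+\beta\,E\bar E+(\text{linear})+c=0$ with $\beta\in\R$, the sum of the coefficients of $x^2$ and $y^2$ equals $2\beta$, so the conic is equilateral (rectangular, perpendicular asymptotes) exactly when $\beta=0$; thus the concrete target of the computation is the cancellation of the $E\bar E$-term. Passage through the four base points is checked by direct substitution into the explicit quadratic (the cases $E\in\{B,F\}$ already trivialize face $F_{\frac32,\frac12}$ and $E\in\{F,H\}$ trivialize face $F_{\frac32,\frac32}$, the remaining incidences being read off the quadratics). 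Granting this, I finish with uniqueness: the pencil of conics through four points in general position contains a single equilateral member, since the trace-zero condition is one linear equation on the pencil parameter. Hence $\Phi_3$ and $\Phi_4$ cut out the same curve, the equilateral hyperbola through $B,D,F,H$, and validity — which requires both face conditions — is equivalent to $E$ lying on it. This is also where the special ``closing-up'' nature of the $2\times2$ configuration is genuinely used, as no soft Miquel-type argument makes the fourth concyclicity a formal consequence of the other three.

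Finally, for the degenerate case $(BH)\perp(DF)$, i.e. $\vec u\perp\vec v$, I would use that the three degenerate members of the pencil through $B,D,F,H$ are the line-pairs $(BD)\cup(FH)$, $(BF)\cup(DH)$ and $(BH)\cup(DF)$, and that a line-pair is equilateral precisely when its two lines are perpendicular. Thus when $(BH)\perp(DF)$ the line-pair $(BH)\cup(DF)$ is the equilateral member of the pencil, so by the uniqueness above the locus degenerates to $(BH)\cup(DF)$, as claimed; here ``$E$ on the hyperbola'' should be read as ``reality of the cross-ratio'' allowing the collinear (tangency) configurations as well.
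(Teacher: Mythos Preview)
Your plan is sound and follows the same route as the paper: note that the two faces $ABED$ and $DEHG$ are concyclic by construction, then impose reality of the cross-ratios $[B,C,F,E]$ and $[E,F,I,H]$ for the remaining two faces, treated as real-algebraic conditions on $E$. The paper's own proof is extremely terse: it simply reports that a computer algebra system shows the imaginary parts of these two cross-ratios vanish exactly on the equilateral hyperbola through $B,D,F,H$ (degenerating to $(BH)\cup(DF)$ when $(BH)\perp(DF)$), with no further explanation.

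Your proposal supplies the conceptual scaffolding the paper omits. In particular, your observation that in complex form a real conic $\alpha E^2+\bar\alpha\bar E^2+\beta E\bar E+\cdots=0$ is equilateral precisely when the real coefficient $\beta$ vanishes, together with the pencil argument (a unique equilateral member through four generic points), explains \emph{why} the two face conditions $\Phi_3=0$ and $\Phi_4=0$ must cut out the same curve rather than merely intersecting in it, and gives a clean account of the degenerate case via the three line-pairs in the pencil. The paper does not separate the two conditions or invoke uniqueness; it just reports the computational outcome. What you gain is an explanation that would let a reader anticipate the result before computing; what the paper's bare Mathematica check buys is brevity and avoidance of the intermediate claims (degree two, vanishing $E\bar E$-coefficient, passage through all four base points) that you correctly flag as the computational crux.
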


\begin{proof}
With the above construction, the faces $ABED$ and $DEHG$ are automatically cyclic. Recall that four points are concyclic if and only if their cross-ratio is real. Using a computer algebra program such as Mathematica, we compute the imaginary parts of the cross-ratios $\left[B,C,E,F\right]$ and $\left[E,F,H,I\right]$ and observe that they vanish if and only if $E$ lies on the equilateral hyperbola going through $B,D,F$ and $H$ (or on $(BH)\cup(DF)$ when $(BH)\perp(DF)$).
\end{proof}

\subsection{Shape of the fundamental domain}

We now describe how the conserved quantities found in Subsection~\ref{subsec:anglesconserved} (signed sums of intersection angles along dual loops) have a simple interpretation in the $2\times2$ case.

\begin{lemma}
\label{lem:22angles}
For any $t\in\Z$, we have
\begin{align}
\angle A_tD_tG_t & \equiv \angle H_tE_tB_t \mod 2\pi \label{eq:verticalanglesequal} \\
\angle C_tB_tA_t &\equiv \angle D_tE_tF_t \mod 2\pi \label{eq:horizontalanglesequal} \\
\angle A_{t+1}D_{t+1}G_{t+1} &\equiv - \angle A_tD_tG_t \mod \pi \label{eq:verticalanglesconserved} \\
\angle C_{t+1}B_{t+1}A_{t+1} &\equiv -\angle C_tB_tA_t \mod \pi. \label{eq:horizontalanglesconserved}
\end{align}
\end{lemma}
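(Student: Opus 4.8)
The plan is to establish the four congruences by combining the geometric characterization of the conserved quantities from Subsection~\ref{subsec:anglesconserved} with the specific monodromy structure available when $m=n=2$. I would begin by relating the angles $\angle A_tD_tG_t$ and $\angle C_tB_tA_t$ appearing in the statement to the signed sums $\tilde\gamma_{S_t}$ along the two generating loops of $H_1(\T,\Z)$. Concretely, the angle $\angle A_tD_tG_t$ is the angle at the vertex $D=S(0,1)$ between the two vertical edges meeting there, and by the turning-angle/parallel-transport reasoning used to prove equations~\eqref{eq:verticalparallel} and~\eqref{eq:horizontalparallel}, this vertex angle should be expressible as (half of) a signed sum of the $\phi$ angles crossing a vertical dual loop, hence as a $\theta$-sum along that loop. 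The key point is that since the fundamental domain has size $2\times2$, a generating loop crosses exactly two dual vertices of each color, so the signed $\theta$-sum telescopes into precisely one geometric vertex angle.

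For the first two congruences~\eqref{eq:verticalanglesequal} and~\eqref{eq:horizontalanglesequal}, I expect that both $\angle A_tD_tG_t$ and $\angle H_tE_tB_t$ compute the same homology class of $\gamma_{S_t}$ evaluated on the vertical generator, just read off at the two different (diagonally opposite) lattice representatives $D$ and $E$ of that loop. Because $\gamma_{S_t}$ is a well-defined homomorphism on $H_1(\T,\Z)$ (independent of the chosen representative loop), the two vertex angles must agree modulo $2\pi$. In detail, I would write $\angle A_tD_tG_t$ and $\angle H_tE_tB_t$ each as $\tilde\gamma_{S_t}$ of two homologous vertical loops — one passing through the column of $D$, the other through the column of $E$ — and invoke that their difference, being nullhomologous, gives $\tilde\gamma=0$. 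The analogous horizontal statement gives~\eqref{eq:horizontalanglesequal}. Care must be taken that the orientation of the vertex angle matches the sign convention $\epsilon$, and that the $\phi$-to-$\theta$ dictionary $\theta_S(e)=\pi-\tfrac12(\phi^E_{i,j}+\phi^W_{i+1,j})$ is applied with the correct edge labels; this bookkeeping, while routine, is where sign errors would creep in.

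The conservation statements~\eqref{eq:verticalanglesconserved} and~\eqref{eq:horizontalanglesconserved} would then follow directly from Theorem~\ref{thm:gammaconserved}, which asserts $\gamma_{\mu_B(S)}=\gamma_{\mu_W(S)}=-\gamma_S$. Evaluating on the vertical generator $[\ell]$ gives $\gamma_{S_{t+1}}([\ell])=-\gamma_{S_t}([\ell])$ in $\R/(2\pi\Z)$; translating back through the identification of $\gamma_{S_t}([\ell])$ with the vertex angle $\angle A_tD_tG_t$ produces $\angle A_{t+1}D_{t+1}G_{t+1}\equiv-\angle A_tD_tG_t$. The reason the statement is only modulo $\pi$ rather than modulo $2\pi$ is presumably that the vertex angle corresponds to \emph{half} of the $\theta$-sum (as the turning-angle computation in Section~\ref{sec:coordinates} introduces a factor of $\tfrac12$), so a congruence modulo $2\pi$ for the $\theta$-sum descends only to a congruence modulo $\pi$ for the angle itself. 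I would make this factor-of-two accounting explicit, since it is the subtlety distinguishing~\eqref{eq:verticalanglesconserved}--\eqref{eq:horizontalanglesconserved} from~\eqref{eq:verticalanglesequal}--\eqref{eq:horizontalanglesequal}.

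The main obstacle I anticipate is the precise identification of a single lattice vertex angle (such as $\angle A_tD_tG_t$) with the homological invariant $\gamma_{S_t}$ on a generator: one must verify that the signed $\theta$-sum along a vertical dual loop collapses, in the $2\times2$ case, to exactly this one angle after using the local flatness relation~\eqref{eq:faceflat} and the turning-angle identity derived for~\eqref{eq:verticalparallel}. This requires carefully tracking which $\phi$ variables survive and confirming that the contributions from the intermediate faces cancel in pairs around the torus. Once this dictionary is nailed down, the four congruences are immediate consequences of the homomorphism property of $\gamma_{S_t}$ and of Theorem~\ref{thm:gammaconserved}; the geometric content is entirely in setting up that dictionary correctly.
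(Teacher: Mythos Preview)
Your approach to \eqref{eq:verticalanglesconserved} and \eqref{eq:horizontalanglesconserved} matches the paper's: identify the vertex angle with (half of) the value of $\gamma_{S_t}$ on a generating loop, then invoke Theorem~\ref{thm:gammaconserved}. Your diagnosis of the factor of $2$ and the resulting drop from mod $2\pi$ to mod $\pi$ is exactly right. The paper carries out the identification $\tilde\gamma_S(l)=2\angle H E B$ explicitly by an angle chase through the kite centers $\Omega_1,\Omega_2$, so your ``main obstacle'' is real but the computation is short in the $2\times2$ case.

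For \eqref{eq:verticalanglesequal} and \eqref{eq:horizontalanglesequal}, however, your route has a genuine gap. You propose to show that $\angle A_tD_tG_t$ and $\angle H_tE_tB_t$ are each equal to $\tfrac12\tilde\gamma_{S_t}$ evaluated on two homologous vertical loops, and conclude by the well-definedness of $\gamma_{S_t}$ on homology. But $\tilde\gamma_{S_t}$ lives in $\R/(2\pi\Z)$, so the best you can extract from $2\angle A_tD_tG_t\equiv 2\angle H_tE_tB_t\pmod{2\pi}$ is a congruence of the angles themselves modulo $\pi$. The lemma asserts equality modulo $2\pi$, and this stronger statement is actually used downstream (e.g.\ to pin $E$ onto the equilateral hyperbola through $B,D,F,H$). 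The paper avoids this loss by proving \eqref{eq:verticalanglesequal} and \eqref{eq:horizontalanglesequal} with a direct inscribed-angle chase that has nothing to do with $\gamma_S$: using that opposite angles in the cyclic faces $DEHG$ and $ABED$ sum to $\pi$, together with the translation $D'=D+\vec v$, one gets
\[
\angle HEB=\angle HED+\angle DEB=(\pi-\angle DGH)+(\pi-\angle BAD)=2\pi-\angle DGD'=\angle D'GD=\angle ADG
\]
as an honest equality in $\R/(2\pi\Z)$. You should replace your homology argument for the first two congruences with this elementary chase; the $\gamma_S$ machinery is only needed, and only sharp enough, for the last two.
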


\begin{proof}
Since the first two statements hold for any pattern in $\calS_{2,2}^0$, we can drop the indices $t$ to prove them. Recall that a quadrilateral is cyclic if and only if opposite angles sum to $\pi$. Denoting by $D'$ the point such that $\overrightarrow{DD'}=\vec{v}$, we obtain the following equalities modulo $2\pi$ :
\begin{align*}
\angle HEB &= \angle HED + \angle DEB \\
&= \pi - \angle DGH +\pi - \angle BAD \\
&= 2\pi - \angle DGD' \\
&= \angle D'GD \\
&= \angle ADG,
\end{align*}
where the last equality holds true because $\overrightarrow{GD'}=\overrightarrow{AD}$. Equation~\eqref{eq:horizontalanglesequal} follows similarly.

We now prove the last two statements of the lemma, which follow from the conserved quantities defined in Subsection~\ref{subsec:anglesconserved}. Define $\overrightarrow{e_1}$ (resp. $\overrightarrow{e_2}$) to be the dual edge from $\left(\tfrac{1}{2},\tfrac{1}{2}\right)$ to $\left(\tfrac{1}{2},\tfrac{3}{2}\right)$ (resp. from $\left(\tfrac{1}{2},\tfrac{3}{2}\right)$ to $\left(\tfrac{1}{2},\tfrac{5}{2}\right)$). On the torus, $(\overrightarrow{e_1},\overrightarrow{e_2})$ form a loop $l$ verifying
\[
\tilde{\gamma}_{S}(l)=\theta_{S}(e_1)-\theta_{S}(e_2).
\]
Writing ${\Omega_1}'=\Omega_1+\vec{v}$ and $E'=E+\vec{v}$, we have the following equalities modulo $2\pi$ :
\begin{align*}
\theta_{S}(e_1)-\theta_{S}(e_2)
&= \angle \Omega_2 E \Omega_1 - \angle {\Omega_1}' H \Omega_2 \\
&=\angle \Omega_2 E \Omega_1 + \angle \Omega_2 H E +  \angle E H E'  + \angle E' H {\Omega_1}'  \\
&=\angle \Omega_2 E \Omega_1 + \angle \Omega_2 H E +  \angle E H E'  + \angle E B  \Omega_1 \\
&=\angle \Omega_2 E \Omega_1 + \angle H E \Omega_2 +  \angle E H E'  + \angle \Omega_1 E B   \\
&= \angle H E B + \angle E H E' \\
&= 2\angle H E B \\
&= 2\angle A D G.
\end{align*}
By Theorem~\ref{thm:gammaconserved} we have
\[
\theta_{S_t}(e_1)-\theta_{S_t}(e_2)=-\left(\theta_{S_{t+1}}(e_1)-\theta_{S_{t+1}}(e_2)\right),
\]
hence formula~\eqref{eq:verticalanglesconserved} follows. Formula~\eqref{eq:horizontalanglesconserved} is proved similarly.
\end{proof}
\begin{remark}
\label{rem:BDtrajectory}
Lemma~\ref{lem:22angles} implies in particular that the motion of $B_{2t}$ for $t\in\Z$ is on the circle $\calC_B$ defined as the locus of the points $M$ such that $\angle C M A \equiv \angle C B A \mod \pi$. The motion of $B_{2t+1}$ is on the circle $\calC_B'$ obtained by reflecting $\calC_B$ across the line $(AC)$. Similarly, the motion of $D_{2t}$ (resp. $D_{2t+1}$) is on some circle $\calC_D$ (resp. $\calC_D'$).
\end{remark}

We will now introduce a dichotomy governing the evolution of Miquel dynamics in the $2\times2$ case, depending on whether both $\angle C B A$ and $\angle A D G$ are different from $0$ modulo $\pi$ (generic case) or not. It will imply something about the shape of the fundamental domain $A C I G$, which is always a parallelogram.

\begin{proposition}
\label{prop:dichotomy}
The fundamental domain $A C I G$ is a rectangle if and only if $\angle C B A\equiv 0\mod \pi$ or $\angle A D G \equiv 0 \mod \pi$. In that case, all the faces are trapezoids.
\end{proposition}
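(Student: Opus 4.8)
The plan is to rephrase both sides of the equivalence as a condition on the position of $E$ with respect to the two lines $(DF)$ and $(BH)$, and then to feed this into Proposition~\ref{prop:hyperbola}. Since $\overrightarrow{DF}=\vec{u}=\overrightarrow{AC}$ and $\overrightarrow{BH}=\vec{v}=\overrightarrow{AG}$, the parallelogram $ACIG$ has adjacent sides $\vec{u}$ and $\vec{v}$, so it is a rectangle if and only if $\vec{u}\perp\vec{v}$, i.e.\ if and only if $(DF)\perp(BH)$. For the other side, $\angle CBA\equiv 0 \bmod \pi$ simply means that $A,B,C$ are collinear; by~\eqref{eq:horizontalanglesequal} this is equivalent to $\angle DEF\equiv 0 \bmod \pi$, that is, to $E\in(DF)$. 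Symmetrically, using~\eqref{eq:verticalanglesequal}, the condition $\angle ADG\equiv 0 \bmod \pi$ is equivalent to $E\in(BH)$. Hence the right-hand condition of the proposition is exactly $E\in(DF)\cup(BH)$.

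The one structural fact I would use about $S$ is Proposition~\ref{prop:hyperbola}: the point $E$ lies on the equilateral hyperbola $\calH$ through $B,D,F,H$, which degenerates to $(DF)\cup(BH)$ precisely when these two lines are perpendicular. The forward implication is then immediate: if $ACIG$ is a rectangle, then $(DF)\perp(BH)$, so by Proposition~\ref{prop:hyperbola} the point $E$ lies on $(DF)\cup(BH)$, which is the desired angle condition.

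For the converse I would run a degenerate-conic argument. Assume $E\in(DF)$ (the case $E\in(BH)$ being symmetric). Then $D,E,F$ are three distinct points of $\calH$ lying on the line $(DF)$, so $\calH$ must be reducible with $(DF)$ as one of its components, say $\calH=(DF)\cup\ell$. Neither $B$ nor $H$ can lie on $(DF)$: if $B\in(DF)$, then $B,D,E$ would be three distinct collinear points on the circle $\calC_{\frac{1}{2},\frac{1}{2}}^S$ of the cyclic face $ABED$, which is impossible; likewise $H\in(DF)$ would force $D,E,H$ to be collinear and concyclic on the circle of the face $DEHG$. Therefore $B,H\in\ell$, so $\ell=(BH)$, and since a degenerate equilateral (trace-zero) conic is precisely a pair of orthogonal lines, its components $(DF)$ and $(BH)$ are perpendicular, giving a rectangle. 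I expect this degenerate-conic step to be the main obstacle: the delicate point is to identify the second component of $\calH$ as exactly $(BH)$, which is where the concyclicity of the faces $ABED$ and $DEHG$ is indispensable in ruling out $B$ and $H$ from $(DF)$.

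Finally, to obtain the trapezoid claim I would assume $ACIG$ is a rectangle and, by the analysis above, that $E\in(DF)\cup(BH)$. Say $E\in(DF)$; then $D,E,F$ are collinear and, since $E\in(DF)$ also gives $A,B,C$ collinear, the bottom and middle rows lie on two parallel lines in the direction $\vec{u}$. Translating by $\vec{v}$ shows that the top row $G,H,I$ lies on a third parallel line, so each of the four faces has its bottom and top edges on consecutive parallel lines and is therefore a trapezoid. The case $E\in(BH)$ gives the same conclusion with the roles of rows and columns interchanged.
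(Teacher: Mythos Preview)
Your proof is correct, and for the implication ``rectangle $\Rightarrow$ angle condition'' it coincides with the paper's argument (both invoke Proposition~\ref{prop:hyperbola} directly). The interesting difference is in the converse, ``angle condition $\Rightarrow$ rectangle''. The paper does not touch conics there: assuming $\angle CBA\equiv 0$, it observes that each cyclic face is a trapezoid with horizontal parallel sides, so the perpendicular bisectors of $[AB]$, $[DE]$ and $[GH]$ all coincide; combined with $ABHG$ being a parallelogram this forces $(AB)\perp(AG)$. Your route instead pushes Proposition~\ref{prop:hyperbola} to do the work in both directions: from $E\in(DF)$ you force the equilateral conic $\calH$ through $B,D,E,F,H$ to split as $(DF)\cup\ell$, identify $\ell=(BH)$ via the concyclicity of the faces $ABED$ and $DEHG$ (ruling out $B,H\in(DF)$), and then read off perpendicularity from the trace-zero condition on a reducible conic. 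This is a clean and self-contained degeneration argument, and it makes the role of the hyperbola more symmetric across the two implications; the paper's version, by contrast, is entirely elementary and avoids any appeal to conic geometry. Your trapezoid paragraph matches the paper's.
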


\begin{proof}
Assume for example that $\angle C B A \equiv 0 \mod \pi$. Then by Lemma~\ref{lem:22angles}, $B\in\left(AC\right)$, $E\in\left(DF\right)$ and $H\in\left(GI\right)$. Since all three lines are parallel, we deduce that all the faces are cyclic trapezoids. Considering the faces $A B E D$ and $D E H G$, this implies that the perpendicular bisectors of the segments $\left[AB\right]$ and $\left[GH\right]$ are both equal to the perpendicular bisector of $\left[DE\right]$. The quadrilateral $A B H G$ being a parallelogram, we conclude that $\left(A B\right)\perp\left(A G\right)$, hence $A C I G$ is a rectangle.

Conversely, assume that the fundamental domain is a rectangle. This implies that $(BH)\perp(DF)$, and by Proposition~\ref{prop:hyperbola}, this means that $E\in(BH)\cup(DF)$. Thus, by~\eqref{eq:verticalanglesequal} and~\eqref{eq:horizontalanglesequal}, $\angle C B A\equiv 0\mod \pi$ or $\angle A D G \equiv 0 \mod \pi$.
\end{proof}

\subsection{Generic case}

In this subsection, we assume that the fundamental domain is not a rectangle. We will show that the trajectory of the point $E_t$ follows some quartic curve. Fix the points $A,C,G,I$ in such a way that $ACIG$ is a parallelogram but not a rectangle (i.e. $AI$ and $CG$ have different lengths). Fix also $\alpha=\angle CBA$ and $\beta=\angle ADG$ to be two angles defined modulo $\pi$. The points $A,C,G,I$ and the angles $\alpha$ and $\beta$ are conserved quantities for the sequence $(\tilde{S}_t)_{t\in\Z}$. Where can the point $E$ lie once we prescribe these quantities ?

Set $O$ to be the common midpoint of the diagonals $\left[AI\right]$ and $\left[CG\right]$. Construct $C_1$ (resp. $C_2$) to be the center of the circle defined as the locus of the points $M$ such that $\angle CMA \equiv \alpha \mod \pi$ (resp. $\angle AMG \equiv \beta \mod \pi$). Set $C_1'=C_1+\overrightarrow{AG}$ and $C_2'=C_2+\overrightarrow{AC}$. Define the point $P$ to be the intersection of the lines $(C_1C_1')$ and $(C_2C_2')$. Let $P'$ be the reflection of $P$ across $O$. Then we have the following result regarding the location of $E$ :

\begin{theorem}
\label{thm:genericquartic}
Define $\calQ$ to be the following quartic curve :
\begin{equation}
\label{eq:genericquartic}
\calQ=\left\{M\in\R^2|PM^2P'M^2-\lambda OM^2 =k\right\},
\end{equation}
where $\lambda$ and $k$ are chosen such that $A$ and $C$ lie on $\calQ$, i.e.
\begin{align}
\lambda&=\frac{PA^2P'A^2-PC^2P'C^2}{OA^2-OC^2} \label{eq:lambda} \\
k&=\frac{OA^2PC^2P'C^2-OC^2PA^2P'A^2}{OA^2-OC^2} \label{eq:k}.
\end{align}
The above construction can be extended to a circle pattern $S\in\calS_{2,2}^0$ such that $\angle CBA \equiv \alpha \mod \pi $ and $\angle ADG \equiv\beta \mod \pi$ if and only if $E$ lies on $\calQ$.
\end{theorem}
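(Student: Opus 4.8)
The plan is to work in complex coordinates with $A$ at the origin and $u,v$ the complex numbers representing $\overrightarrow{AC}$ and $\overrightarrow{AG}$, so that $C=u$, $G=v$, $I=u+v$ and $O=\tfrac{u+v}{2}$. The genuine unknowns are the two neighbours $B,D$ of $E$, since $F=D+u$ and $H=B+v$ are then forced by biperiodicity. First I would translate the prescribed data into equations. By Lemma~\ref{lem:22angles} and Remark~\ref{rem:BDtrajectory}, prescribing $\alpha=\angle CBA$ confines $B$ to the circle $\calC_B$ through $A$ and $C$ (the inscribed-angle circle of chord $AC$ under angle $\alpha$, with centre $C_1$), and prescribing $\beta=\angle ADG$ confines $D$ to the circle $\calC_D$ through $A$ and $G$ (centre $C_2$); translating by $v$ and $u$ puts $H$ and $F$ on the circles centred at $C_1'$ and $C_2'$. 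The remaining requirement is that the four faces be cyclic, which I would encode exactly as in the proof of Proposition~\ref{prop:hyperbola} by the vanishing of the imaginary parts of the cross-ratios $[A,B,E,D]$, $[B,C,F,E]$, $[D,E,H,G]$ and $[E,F,I,H]$.

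Next I would eliminate $B$ and $D$. For a given $E$, two of the cyclicity conditions together with $B\in\calC_B$ and $D\in\calC_D$ determine $B$ and $D$ as second intersections of $\calC_B,\calC_D$ with circles through $E$ and a corner; substituting these into the two remaining conditions collapses the existence of a valid pattern into a single real-algebraic equation in $E$. As in Proposition~\ref{prop:hyperbola}, where the two cross-ratio conditions turned out to cut out the \emph{same} hyperbola, I expect the four conditions here to be far from independent and to reduce to one equation, which I would compute with a computer algebra system and confirm to be of degree four.

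The structural step is to bring this quartic into the stated shape. Writing $\zeta=E-O$, I would read off from the elimination that its degree-four part is a multiple of $|\zeta|^4$, i.e.\ that the curve is bicircular (it passes doubly through the circular points at infinity), which is to be expected since every defining relation is a concyclicity; and that the equation is centrally symmetric about $O$, so that it contains only even powers of $\zeta$. Geometrically the latter reflects that point reflection through $O$ is a similarity fixing the corner set (one checks $\sigma(I)=A$, $\sigma(C)=G$, etc.) and sending $(\alpha,\beta)$ to $(-\alpha,-\beta)$; this is consistent with the fact that a single Miquel step flips $(\alpha,\beta)$ by~\eqref{eq:verticalanglesconserved}--\eqref{eq:horizontalanglesconserved} while keeping the trajectory of $E_t$ on one curve, which forces that curve to be invariant under $\sigma$. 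These two features together pin the equation down to the form $|\zeta^2-\pi_0^2|^2=\lambda|\zeta|^2+k$, which is precisely $PM^2P'M^2-\lambda\,OM^2=k$ once $P=O+\pi_0$ and $P'=O-\pi_0$: the curve is a Cassini-type quartic with foci $P,P'$, perturbed by the $\lambda\,OM^2$ term.

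It then remains to identify the constants. Imposing that the degenerate centres $E=A$ and $E=C$ lie on the locus (the limits in which a face collapses) pins $\lambda$ and $k$ to the values~\eqref{eq:lambda}--\eqref{eq:k}, and central symmetry supplies $G,I$ for free. The delicate point, which I expect to be the main obstacle, is the focal matching: one must verify that the complex number $\pi_0^2$ extracted from the coefficient of $\zeta^2$ in the eliminated equation equals $(P-O)^2$ for the \emph{geometrically constructed} $P=(C_1C_1')\cap(C_2C_2')$. I would do this by writing $C_1=\tfrac12 u(1+i\cot\alpha)$ and $C_2=\tfrac12 v(1+i\cot\beta)$ explicitly, solving the two linear equations for $P$, and comparing; the conceptual content to be confirmed is that the foci of this quartic are exactly the intersection of the two lines of centres of the corner circles. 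Finally, sufficiency follows by reversing the elimination: given $E\in\calQ$, the single scalar equation is solvable for admissible $B\in\calC_B$ and $D\in\calC_D$, and the construction preceding Proposition~\ref{prop:hyperbola} then produces a genuine element of $\calS_{2,2}^0$ with the prescribed corners and angles.
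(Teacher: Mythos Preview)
Your outline is a plausible alternative route, and the structural observations you make (bicircularity, central symmetry about $O$, hence a Cassini-type form $|\zeta^2-\pi_0^2|^2=\lambda|\zeta|^2+k$) are a genuinely nice conceptual addition that the paper does not attempt. Both your approach and the paper's ultimately defer the hard verification to a computer algebra system, but the setups differ in an important way.

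The paper's key simplification, which you do not use, is to invoke Proposition~\ref{prop:hyperbola} first: since $B,D,E,F,H$ lie on an equilateral hyperbola, one applies a similarity sending that hyperbola to $xy=1$, so that all five movable points are parametrized by a single real number each. From there, the paper computes $\Omega_1,\Omega_2$, then $A,C,G,I$, then $C_1,C_2$, then $O,P,P'$, and finally checks the quartic relation directly. In other words, the paper works ``forward'' from the hyperbola parametrization rather than starting from the corners $A,C,G,I$ and the angle data and eliminating $B,D$ as you propose. This makes the CAS computation substantially lighter, and it also sidesteps what you correctly flag as the delicate step in your plan, namely the focal matching: in the paper's direction $P$ is simply computed from the same parameters as everything else, so no separate identification is needed.

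For the converse, the paper again proceeds differently: it fixes $A,C,G,I,C_1,C_2$ and $E$, writes three conditions on $D$ alone (two circle memberships and one hyperbola membership, the latter again coming from Proposition~\ref{prop:hyperbola}), and eliminates $D$ by resultants; the quartic $\calQ$ appears as one irreducible component of the resulting locus in $E$. Your plan to solve for $B$ and $D$ simultaneously from the cross-ratio equations and then check the remaining two would work in principle, but the redundancy you are counting on (``the four conditions here to be far from independent'') is exactly the content of Proposition~\ref{prop:hyperbola}, and using it explicitly as the paper does is both cleaner and what makes the elimination tractable.
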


\begin{figure}[htpb]
\centering
\includegraphics[height=3in]{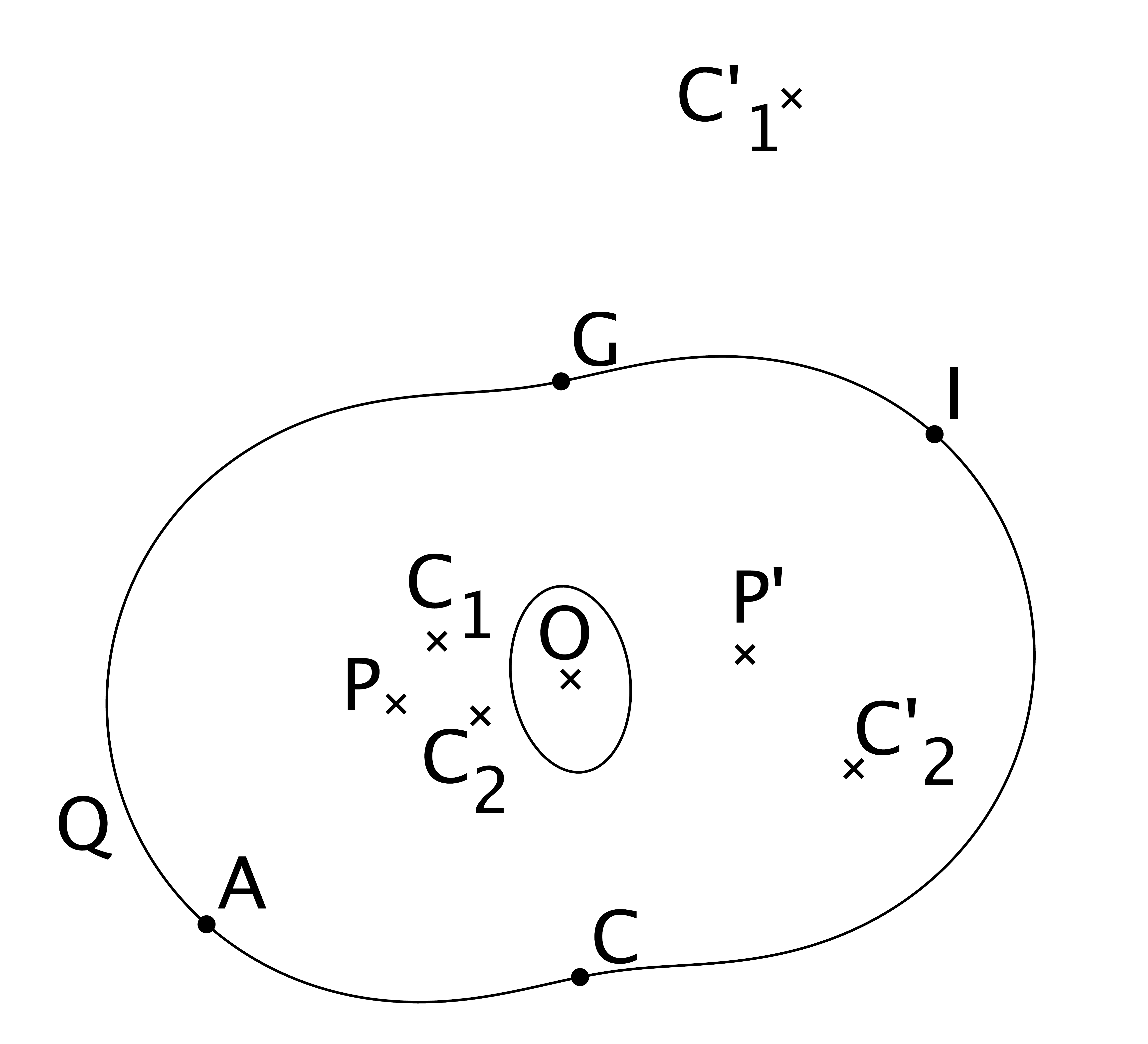}
\caption{The quartic curve $\calQ$ when the fundamental domain is not rectangular. In this example, the curve has two real ovals, but it may also have a single oval or two non-nested ovals.}
\label{fig:genericquartic}
\end{figure}

\begin{proof}
Assume we have a circle pattern $S\in\calS_{2,2}^0$. By Proposition~\ref{prop:hyperbola}, the points $B,D,E,F$ and $H$ lie on a common equilateral hyperbola $\calH$. We apply to $S$ the similarity mapping $\calH$ to the hyperbola of equation $xy=1$. The coordinates of $B,D,E,F$ and $H$ are now all of the form $\left(x,\tfrac{1}{x}\right)$. Using a computer algebra program, we compute the coordinates of $\Omega_1$ and $\Omega_2$, deduce from it the coordinates of $A$, $C$, $G$ and $I$, then the coordinates of $C_1$ and $C_2$. From these, we obtain the coordinates of the points $O$, $P$ and $P'$. Defining $\lambda$ and $k$ using equations~\eqref{eq:lambda} and~\eqref{eq:k}, we can finally show that
\[
PE^2P'E^2-\lambda OE^2 -k=0,
\]
thus $E$ lies on $\calQ$.

Conversely, pick $E$ on $\calQ$. In order to extend the points $A,C,E,G,I$ to a circle pattern verifying $\angle CBA \equiv \alpha \mod \pi$ and $\angle ADG \equiv \beta \mod \pi$, we need to find a point $D$ satisfying the following three conditions, each of them expressed by an equation involving the Cartesian coordinates of the points :
\begin{enumerate}
\item $D$ lies on the circle of center $C_2$ going through $A$ (equation 1)
\item $D$ lies on the circle of center $C_1+\overrightarrow{CE}$ going through $E$ (equation 2)
\item $D$ lies on the equilateral hyperbola going through $A,E,G$ and $E+\overrightarrow{CA}$ (equation 3).
\end{enumerate}
In these equations, the coordinates of $A,C,G,C_1$ and $C_2$ are fixed and the variables are $x_D,y_D,x_E$ and $y_E$. Taking successive resultants with a computer algebra program, we eliminate the variables $x_D$ and $y_D$ and obtain a polynomial equation involving only $x_E$ and $y_E$. The vanishing locus of the corresponding polynomial is the union of four irreducible components, one of them being the quartic curve $\calQ$. Since $E$ lies on $\calQ$, we can find a point $D$ such that equations 1,2 and 3 are satisfied. Thus $D$ lies on the equilateral hyperbola going through $A,E,G$ and $E+\overrightarrow{CA}$, and by (a shifted version of) Proposition~\ref{prop:hyperbola}, we have that the points $A,C,D,E,G,I$ can be extended to a circle pattern in $\calS_{2,2}^0$. By equation 1 (resp. 2), we have $\angle ADG=\beta \mod \pi$ (resp. $\angle EDE'=\alpha \mod \pi$, where $E'=E+\overrightarrow{CA}$). By Lemma~\ref{lem:22angles} we have $\angle CBA = \angle EDE'$, hence the conclusion.
\end{proof}

We deduce from Theorem~\ref{thm:genericquartic} the following consequence :

\begin{corollary}
When the fundamental domain is not a rectangle, the trajectory of $E_t$ is along the quartic curve $\calQ$ defined above.
\end{corollary}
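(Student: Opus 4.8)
The plan is to apply Theorem~\ref{thm:genericquartic} at every time step and then show that the resulting quartic does not depend on $t$. Since the monodromies are preserved (Subsection~\ref{subsec:monodromyconserved}), the four corners $A,C,G,I$ of the fundamental domain are the same for every $\tilde{S}_t$. Setting $\alpha=\angle C_0B_0A_0$ and $\beta=\angle A_0D_0G_0$, I would apply Theorem~\ref{thm:genericquartic} to the pattern $\tilde{S}_t\in\calS_{2,2}^0$ to conclude that $E_t$ lies on the quartic $\calQ_t$ built from the data $(A,C,G,I,\alpha_t,\beta_t)$, where $\alpha_t=\angle C_tB_tA_t$ and $\beta_t=\angle A_tD_tG_t$. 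By equations~\eqref{eq:verticalanglesconserved} and~\eqref{eq:horizontalanglesconserved} of Lemma~\ref{lem:22angles}, these angles satisfy $\alpha_t\equiv(-1)^t\alpha$ and $\beta_t\equiv(-1)^t\beta \bmod\pi$. Hence it suffices to prove that $\calQ_t$ is invariant under the simultaneous sign change $(\alpha,\beta)\mapsto(-\alpha,-\beta)$, which would give $\calQ_t=\calQ_0=\calQ$ for all $t$ and thus $E_t\in\calQ$.

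The heart of the argument is tracking the construction of $P$ and $P'$ under this sign change. Replacing $\alpha$ by $-\alpha$ replaces the circle $\{M:\angle CMA\equiv\alpha\bmod\pi\}$ by its mirror image across the line $(AC)$, because reflection reverses oriented angles while fixing $A$ and $C$. As the center $C_1$ lies on the perpendicular bisector of $[AC]$, this reflection sends $C_1$ to $\bar{C}_1=2M_{AC}-C_1$, where $M_{AC}$ is the midpoint of $[AC]$; likewise $C_2$ is sent to $\bar{C}_2=2M_{AG}-C_2$. I would then check that the point reflection through the center $O=A+\tfrac{1}{2}(\vec{u}+\vec{v})$ of the parallelogram carries the line $(C_1C_1')$ onto the line through $\bar{C}_1$ in direction $\overrightarrow{AG}$: reflecting $C_1$ through $O$ gives $2O-C_1$, which differs from $\bar{C}_1$ exactly by the vector $\overrightarrow{AG}$, that is, along the common direction of the two lines, so they coincide. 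The same computation handles the pair of lines in direction $\overrightarrow{AC}$. Consequently the point reflection through $O$ sends $(C_1C_1')\cap(C_2C_2')$ to $(\bar{C}_1\bar{C}_1')\cap(\bar{C}_2\bar{C}_2')$, i.e.\ it sends $P$ to $\bar{P}$; combined with $2O-P=P'$ this yields $\bar{P}=P'$ and, symmetrically, $\bar{P}'=P$.

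It then remains to observe that the sign change merely swaps the unordered pair $\{P,P'\}$ while fixing $O$. Since the defining expression $PM^2P'M^2-\lambda\, OM^2$ in~\eqref{eq:genericquartic} is symmetric in $P$ and $P'$, and the scalars $\lambda,k$ of~\eqref{eq:lambda} and~\eqref{eq:k} depend only on the symmetric quantities $PA^2P'A^2$, $PC^2P'C^2$, $OA^2$, $OC^2$, the quartic $\calQ_t$ is unchanged; hence $\calQ_t=\calQ$ for every $t$. Throughout, the hypothesis that the fundamental domain is not a rectangle (equivalently $\alpha,\beta\not\equiv0\bmod\pi$ by Proposition~\ref{prop:dichotomy}) guarantees that the loci defining $C_1$ and $C_2$ are genuine circles, so the construction is non-degenerate. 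The step meriting the most care is precisely the identity $\bar{P}=P'$, whose verification is the point-reflection computation sketched above; once it is in hand, the symmetry of $\calQ$ in $P$ and $P'$ closes the argument.
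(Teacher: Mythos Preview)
Your proof is correct and follows essentially the same strategy as the paper's: fix the corners $A,C,G,I$, use Lemma~\ref{lem:22angles} to reduce to showing $\calQ^{\alpha,\beta}=\calQ^{-\alpha,-\beta}$, and verify this by tracking the construction of $P,P'$ under the sign change to obtain $\bar P=P'$, $\bar P'=P$. Your justification of the identity $\bar P=P'$ via the midpoint computation $2O-C_1=\bar C_1+\overrightarrow{AG}$ is in fact more explicit than the paper's one-line assertion that the hatted centers are the images of $C_1,C_2,C_1',C_2'$ under the point reflection through $O$; the two arguments are equivalent at the level of the lines $(C_1C_1')$ and $(C_2C_2')$, which is all that is needed.
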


\begin{proof}
Denote by $\calQ_{A,C,G,I}^{\alpha, \beta}$ the quartic curve constructed from the points $A,C,G$ and $I$ and $\alpha,\beta$ the angles modulo $\pi$. Firstly, observe that the position of $A_t$, $C_t$, $G_t$ and $I_t$ is independent of $t$. Secondly, if we write $\alpha_0=\angle C_0B_0A_0$ and $\beta_0=\angle A_0 D_0 G_0$, by Lemma~\ref{lem:22angles}, we have $\angle C_{t}B_{t}A_{t} \equiv (-1)^t \alpha_0 \mod \pi $ and $\angle A_{t}D_{t}G_{t} \equiv (-1)^t \beta_0 \mod \pi$. Thus the quartic curve constructed from the pattern $\tilde{S}_t$ (which contains the point $E_t$) is either $\calQ=\calQ_{A_0,C_0,G_0,I_0}^{\alpha_0,\beta_0}$ or $\hat{\calQ}=\calQ_{A_0,C_0,G_0,I_0}^{-\alpha_0,-\beta_0}$. Denoting by $\hat{C}_1,\hat{C}_2,\hat{C}_1',\hat{C}_2',\hat{P}$ and $\hat{P'}$ the counterparts of $C_1,C_2,C_1',C_2',P$ and $P'$ entering the construction of $\hat{\calQ}$, we observe that $\hat{C}_1,\hat{C}_2,\hat{C}_1'$ and $\hat{C}_2'$ are the respective images of $C_1,C_2,C_1'$ and $C_2'$ under the reflection across $O$. Thus $\hat{P}=P'$ and $\hat{P'}=P$, and we conclude that $\calQ=\hat{\calQ}$.
\end{proof}

\begin{remark}
The curve $\calQ$ can have one or two real ovals. In the case when it has two nested ovals, that $E_0$ lies on the inner oval and $A_0,C_0,G_0,I_0$ lie on the outer oval, as on Figure~\ref{fig:genericquartic}, we observe experimentally that the trajectory of $E_t$ stays on this inner oval. See Figure~\ref{fig:22integrability} for a  plot of several iterates of $E_t$.
\end{remark}

\begin{figure}[htpb]
\centering
\includegraphics[height=2in]{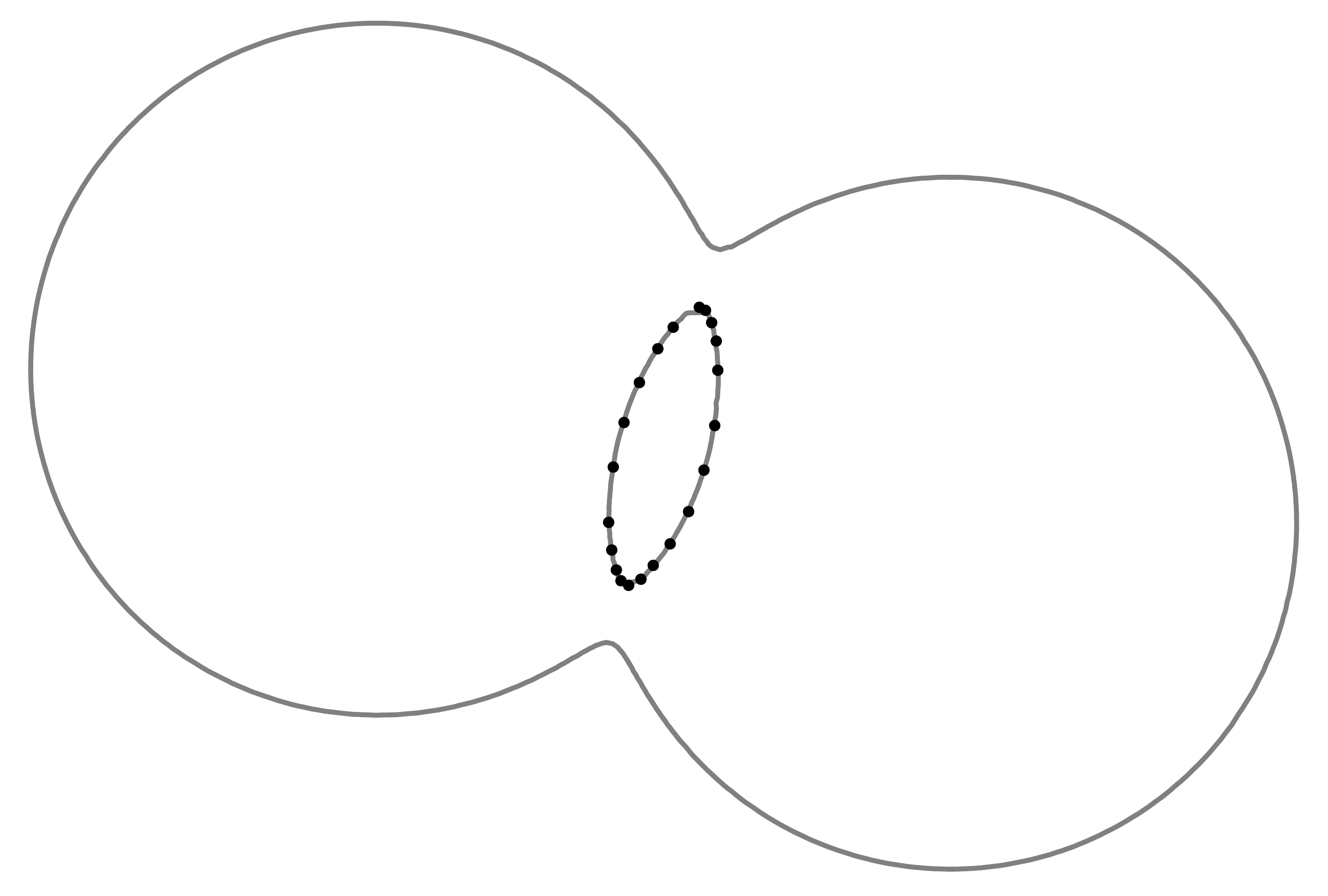}
\caption{Computer simulation of the motion of $E_t$ on the quartic curve for ten iterations of each mutation.}
\label{fig:22integrability}
\end{figure}

\subsection{Rectangular fundamental domain}

In this subsection, we consider the case when the fundamental domain is a rectangle. By Proposition~\ref{prop:dichotomy}, at least one of the angles $\angle CBA$ or $\angle ADG$ is flat. Without loss of generality, we assume that $\angle CBA \equiv 0 \mod \pi$.

We assign coordinates to the vertices of the circle pattern $S$ by picking the center of the coordinates to be the center of the rectangle $ACIG$ and the horizontal axis to be in the direction of  $\overrightarrow{AC}$. Then we have the following coordinates :
\begin{align*}
&A=\left(-x_I,-y_I\right) &&C=\left(x_I,-y_I\right) &&&G=\left(-x_I,y_I\right) \\
&I=\left(x_I,y_I\right) && D=\left(x_D,y_E\right) &&& E=\left(x_E,y_E\right).
\end{align*}
Define the quantities
\begin{align}
a&=x_I^2+y_I^2+x_E^2+y_E^2+\frac{\left(x_D+x_I\right)^2\left(x_I^2+y_I^2-x_E^2-y_E^2\right)}{y_I^2-y_E^2} \\
b&=x_I^2+y_I^2+x_E^2+y_E^2+\frac{\left(x_D+x_I\right)^2\left(x_E^2-x_I^2\right)\left(x_I^2+y_I^2-x_E^2-y_E^2\right)}{\left(y_I^2-y_E^2\right)^2} \\
c&=\left(x_I^2+y_I^2\right)\left(x_E^2+y_E^2\right)+\frac{\left(x_D+x_I\right)^2\left(x_E^2y_I^2-x_I^2y_E^2\right)\left(x_I^2+y_I^2-x_E^2-y_E^2\right)}{\left(y_I^2-y_E^2\right)^2}.
\end{align}
Define the quartic curve $\calK_S$ to be the real vanishing locus of the polynomial $(X^2+Y^2)^2-aX^2-bY^2+c$. This curve supports the trajectory of $E_t$ :

\begin{theorem}
\label{thm:trapezoidalquartic}
For any $t\in\Z$, the point $E_t$ lies on the quartic curve $\calK_{S}$. Moreover, the points $A,C,G$ and $I$ also lie on $\calK_{S}$.
\end{theorem}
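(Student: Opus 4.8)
The plan is to separate the statement into two parts of very different natures. The membership of the corners $A,C,G,I$ and of the initial point $E$ in $\calK_S$ will turn out to be a formal algebraic identity, whereas the assertion that every iterate $E_t$ lies on the \emph{same} curve $\calK_S$ carries all the dynamical content.

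First I would fix the coordinate set-up of the statement. By Proposition~\ref{prop:dichotomy} the rectangular case means $\angle CBA\equiv0$ or $\angle ADG\equiv0\bmod\pi$, and we may assume the former; Lemma~\ref{lem:22angles} then forces $E\in(DF)$, i.e.\ $D=(x_D,y_E)$, so that all four faces are isosceles cyclic trapezoids and $B,F,H$ are determined by $D,E$ and the rectangle. Since the defining polynomial $(X^2+Y^2)^2-aX^2-bY^2+c$ depends only on $X^2$ and $Y^2$, the four corners all reduce to the single equation $(x_I^2+y_I^2)^2-a\,x_I^2-b\,y_I^2+c=0$, and the point $E$ to $(x_E^2+y_E^2)^2-a\,x_E^2-b\,y_E^2+c=0$. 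Inserting the three rational expressions for $a,b,c$, both collapse to polynomial identities in $x_I,y_I,x_D,x_E,y_E$: the coefficients $a,b,c$ are calibrated exactly so that these two equations hold, so this half requires only a short substitution and uses no circle-pattern axiom beyond the coordinate normalisation.

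The core of the proof is the invariance $E_t\in\calK_{S_0}$ for all $t$. The difficulty is that $\calK_{S_0}$ is built from the coordinates of $S_0$ \emph{alone}, and its coefficients are genuinely not functions of the conserved data identified so far — the rectangle $(x_I,y_I)$ and the angle $\beta=\angle ADG$ — because they also depend on $x_E$; indeed one computes $a-b=\delta^2(\rho_I^2-\rho_E^2)^2/(y_I^2-y_E^2)^2$ with $\delta=x_D+x_I$, $\rho_I^2=x_I^2+y_I^2$, $\rho_E^2=x_E^2+y_E^2$, which varies with $x_E$. Thus the theorem encodes a genuinely \emph{new} invariant, and I would prove it by showing that the triple $(a,b,c)$ is preserved by a single mutation. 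The monodromy invariants of Subsection~\ref{subsec:monodromyconserved} keep the rectangle $A_tC_tI_tG_t$ fixed, and Lemma~\ref{lem:22angles} keeps $\angle C_tB_tA_t\equiv0$ and $\angle A_tD_tG_t\equiv(-1)^t\beta\bmod\pi$, so each $\tilde S_t$ again lies on the rectangular $\angle CBA\equiv 0$ locus and $\calK_{\tilde S_t}$ is written in the same centred frame as $\calK_{S_0}$. I would then make the action of $\mu_B$ on $(x_D,x_E,y_E)$ explicit: under $\mu_B$ each vertex is reflected across the line joining the two \emph{white} circumcenters through it, and since $\Omega_2,\Omega_3$ sit on the vertical lines $x=\tfrac{x_D+x_E}{2}$ and $x=\tfrac{x_D+x_E}{2}+x_I$ these reflection lines are completely explicit. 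Substituting the mutated coordinates into $a,b,c$ and simplifying — with a computer algebra system, as in the proof of Theorem~\ref{thm:genericquartic} — should give $a'=a$, $b'=b$, $c'=c$, and the symmetric computation handles $\mu_W$. Granting this, the membership identity applied to $\tilde S_t$ yields $E_t\in\calK_{\tilde S_t}=\calK_{S_0}$.

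The main obstacle is exactly this invariance under mutation: unlike Theorem~\ref{thm:genericquartic}, where the quartic was manifestly assembled from conserved data, here conservation must be wrung out of the mutation maps themselves, and I expect the elimination to be heavy. Two guides would keep it honest. First, $\calK_S$ is a Cassini-type curve of the same shape as the generic $\calQ$: with $p^2=\tfrac{a-b}{4}$ it has foci $(\pm p,0)$ on the symmetry axis of the rectangle and centre $O$, which is precisely the degeneration of the generic construction as the vanishing of $\angle CBA$ collapses the focal data onto one axis. Second, the invariance under the sign flip $\beta\mapsto-\beta$ (needed so that even and odd $t$ share the curve) should follow, as in the corollary to Theorem~\ref{thm:genericquartic}, from the fact that this flip reflects the focal pair through $O$ and hence leaves $\{(\pm p,0)\}$, and therefore $(a,b,c)$, unchanged. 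If a conceptual reason can be found for why this focal pair and the constant $k$ are mutation-invariant, it would supersede the brute-force check; absent that, verifying $a'=a,\ b'=b,\ c'=c$ by computer algebra is the fallback.
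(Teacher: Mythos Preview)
Your approach is essentially the same as the paper's: verify algebraically that $A,C,G,I,E$ satisfy the quartic equation, then use a computer algebra system to check that the coefficients $(a,b,c)$ are unchanged by a single mutation, and conclude by induction that $\calK_{S_t}=\calK_{S_0}$. The only notable difference is that the paper streamlines the static membership check by rewriting the defining equation in the determinantal form
\[
(y_I^2-y_E^2)^2\bigl(X^2+Y^2-x_I^2-y_I^2\bigr)\bigl(X^2+Y^2-x_E^2-y_E^2\bigr)
=(x_D+x_I)^2\bigl(x_E^2+y_E^2-x_I^2-y_I^2\bigr)
\begin{vmatrix}
1 & x_E^2 & y_E^2 \\
1 & x_I^2 & y_I^2 \\
1 & X^2 & Y^2
\end{vmatrix},
\]
from which the membership of $E$ and of the corners is immediate; your direct substitution is equivalent.
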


\begin{figure}[htpb]
\centering
\includegraphics[height=2.5in]{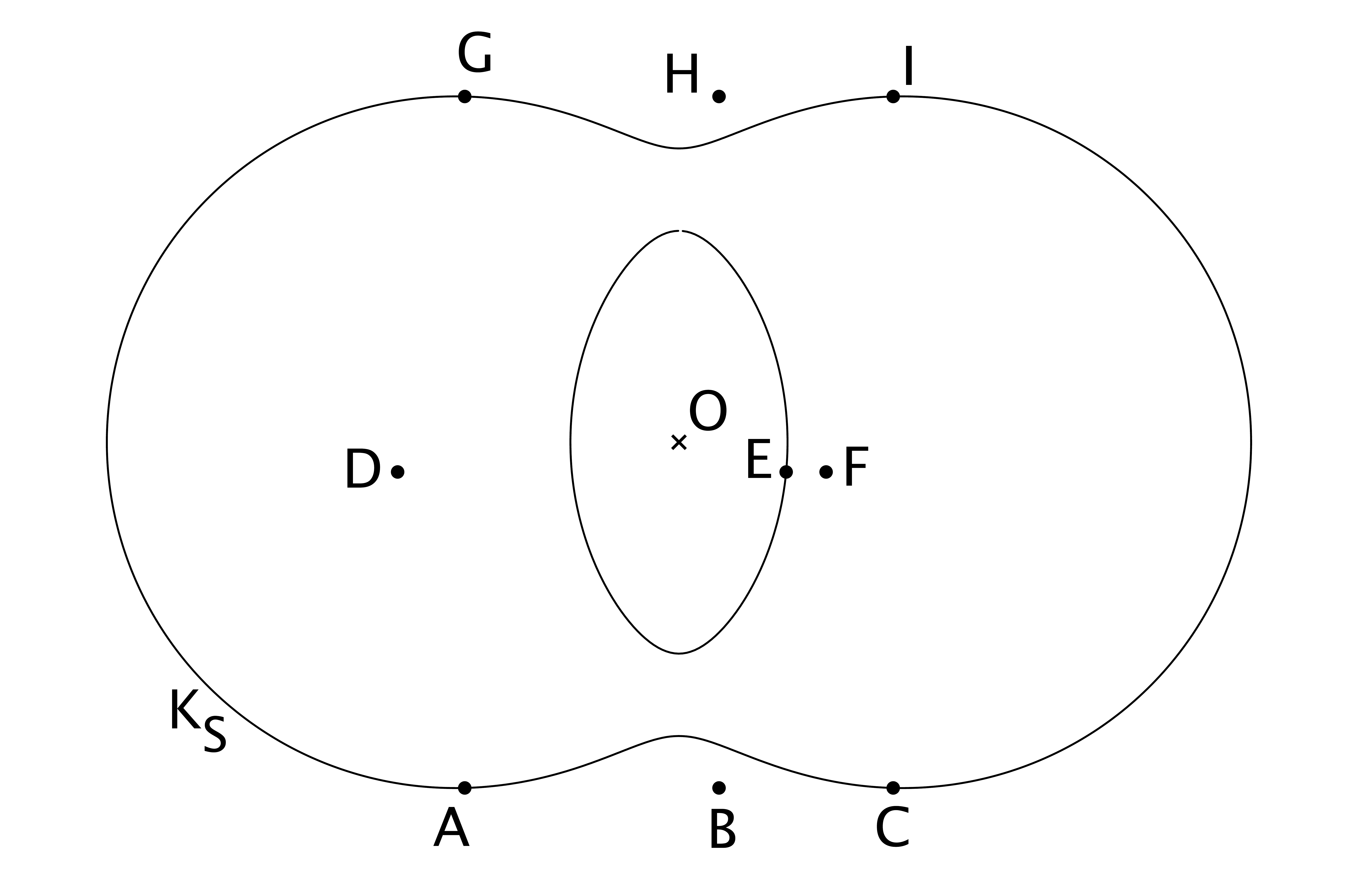}
\caption{The quartic curve $\calK_S$ when the fundamental domain is rectangular. In this example, the curve has two nested real ovals, but it may also have a single oval or two non-nested ovals.}
\label{fig:trapezoidalquartic}
\end{figure}

\begin{proof}
One can check that the equation defining $\calK_S$ can be rewritten as
\begin{multline}
\label{eq:determinantquartic}
\left(y_I^2-y_E^2\right)^2\left(X^2+Y^2-x_I^2-y_I^2\right)\left(X^2+Y^2-x_E^2-y_E^2\right)=\\
\left(x_D+x_I\right)^2\left(x_E^2+y_E^2-x_I^2-y_I^2\right)
\begin{vmatrix}
1 & x_E^2 & y_E^2 \\
1 & x_I^2 & y_I^2 \\
1 & X^2 & Y^2
\end{vmatrix}.
\end{multline}
From this formula, it is clear that the points $A,C,G,I$ and $E$ lie on $\calK_S$. Denote by $a_t,b_t$ and $c_t$ the time-dependent versions of $a,b$ and $c$ (where $D,E$ and $I$ are replaced by $D_t,E_t$ and $I_t$). Using a computer algebra program, one can compute the coordinates of $D_1,E_1,D_{-1}$ and $E_{-1}$ as functions of the coordinates of $D_0,E_0$ and $I_0$ and deduce that
\begin{align}
a_1=a_{-1}&=a_0 \\
b_1=b_{-1}&=b_0 \\
c_1=c_{-1}&=c_0.
\end{align}
This implies $\calK_{S_1}=\calK_{S_{-1}}=\calK_{S_0}$ and by induction, $\calK_{S_t}=\calK_{S_0}$ for any $t\in\Z$. Thus $E_t\in\calK_{S_0}$ for any $t\in\Z$.
\end{proof}

Here, when the angle $\angle CBA$ is flat, all the faces are trapezoids. When we additionally impose that $\angle ADG$ is flat, all the faces become rectangles. Then, since $x_D=-x_I$, by formula~\eqref{eq:determinantquartic}, the quartic $\calK_S$ degenerates to the union of two circles centered at the center of the fundamental domain, one going through $E$ and the other one going through $A,D,G$ and $I$. In this case $E_t$ stays on the circle containing $E_0$ and we can even compute the speed of rotation. Write $\delta=\angle COI$, where $O$ is the center of the fundamental domain. Let $\sigma$ denote the reflection across the horizontal axis (going through $O$ and parallel to $(AC)$) and for any angle $\gamma$, let $r_\gamma$  denote the rotation of center $O$ and angle $\gamma$. Then the following holds :

\begin{theorem}
\label{thm:rectangularrotation}
For any $t\in\Z$,
\begin{align}
E_{2t+1}&=\sigma\circ r_{-\delta}\left(E_{2t}\right) \\
E_{2t}&=\sigma\circ r_\delta\left(E_{2t-1}\right).
\end{align}
In particular, for any $t\in\Z$,
\begin{align}
E_{2t+1}&=r_{2\delta}\left(E_{2t-1}\right) \\
E_{2t+2}&=r_{-2\delta}\left(E_{2t}\right).
\end{align}
\end{theorem}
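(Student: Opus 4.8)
The plan is to use the extreme rigidity of the rectangular case to reduce a single mutation to a reflection across a diagonal of $ACIG$. Since all four faces are rectangles, each circumcircle has a diagonal of its face as diameter, so the circumcenter of a face is simply its center. Placing $O$ at the origin with $(AC)$ horizontal, the nine vertices form a (non-uniform) rectangular grid: with $A,C,G,I=(\mp x_I,\mp y_I)$ one has $E=(x_E,y_E)$, $B=(x_E,-y_I)$, $D=(-x_I,y_E)$, $F=(x_I,y_E)$, $H=(x_E,y_I)$, and the four centers $\Omega_1,\dots,\Omega_4$ are the midpoints of the corresponding diagonals. I will pass to complex coordinates, writing $z_P$ for the affix of a point $P$ and $\zeta=x_I+iy_I=|\zeta|e^{i\omega}$, so that $\delta=\angle COI=2\omega$. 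Theorem~\ref{thm:trapezoidalquartic} and the degeneration of $\calK_S$ recorded above already guarantee that every $E_t$ lies on the circle of center $O$ through $E_0$; this will reappear directly in the computation.

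By the conjugation identity $\sigma\circ r_\theta\circ\sigma=r_{-\theta}$, the two composite formulas $E_{2t+1}=r_{2\delta}(E_{2t-1})$ and $E_{2t+2}=r_{-2\delta}(E_{2t})$ follow formally once the two single-step formulas are established. Hence it suffices to show that a black mutation acts on $E$ as $\sigma\circ r_{-\delta}$ and a white mutation as $\sigma\circ r_{\delta}$; the two cases are interchanged by swapping the two diagonals of $ACIG$, so I will only treat the black one.

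Under $\mu_B$ the white circles are unchanged and $E=S(1,1)$ is sent to the second intersection of the two white circles through it, that is, to the reflection $E^{\mathrm{new}}$ of $E$ across the line $(\Omega_2\Omega_3)$. A short computation gives $\Omega_3-\Omega_2=(x_I,-y_I)=\overline{\zeta}$ and shows that this line passes through the midpoint of $[OE]$, whence $z_{E^{\mathrm{new}}}=\tfrac12\big(z_E+e^{-2i\omega}\overline{z_E}\big)$, a point lying on a diagonal through $O$ rather than on the circle. The subtlety is that all vertices move: in particular $A=S(0,0)$ is reflected across the line through the centers of the two white faces meeting at $A$, which are the translates of $\Omega_3$ and $\Omega_2$ by $-\vec{u}$ and $-\vec{v}$, a line parallel to $(\Omega_2\Omega_3)$. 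The point $E_{2t+1}$ is recovered only after the renormalizing translation $\vec{w}=z_{A_0}-z_{A^{\mathrm{new}}}$ that sends $A^{\mathrm{new}}$ back to $A_0$, so I will compute $z_{A^{\mathrm{new}}}$ and then set $z_{E_1}=z_{E^{\mathrm{new}}}+\vec{w}$.

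The crux is the cancellation occurring in this last step. Both $E^{\mathrm{new}}$ and $A^{\mathrm{new}}$ involve the same scalar $\kappa=\mathrm{Re}(z_E\zeta)/|\zeta|^2$; substituting it, the translation exactly removes the diagonal component of $E^{\mathrm{new}}$ and leaves $z_{E_1}=e^{\pm i\delta}\,\overline{z_{E_0}}$, the affix of the reflection of $E_0$ across one of the two diagonals of $ACIG$ through $O$. This is precisely $\sigma\circ r_{-\delta}(E_0)$, the sign being fixed by the orientation of Figure~\ref{fig:trapezoidalquartic}; in particular $|OE_1|=|OE_0|$, in agreement with the degenerate quartic. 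The white mutation is the identical computation with the roles of the black and white circles, hence of the two diagonals, exchanged, yielding $\sigma\circ r_{\delta}$. I expect the main obstacle to be bookkeeping the renormalizing translation correctly: before it the mutation is \emph{not} an isometry of the plane (distinct vertices are reflected across distinct parallel lines), and it is only after composing with $\vec{w}$ that the motion of $E$ becomes a clean reflection across a diameter.
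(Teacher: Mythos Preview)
Your approach is correct and genuinely different from the paper's. The paper simply delegates everything to a computer algebra system, computing the coordinates of $E_{t+1}$ symbolically from those of $E_t$ and $I_t$ and checking the formulas against $\cos\delta=\tfrac{x_I^2-y_I^2}{x_I^2+y_I^2}$, $\sin\delta=\tfrac{2x_Iy_I}{x_I^2+y_I^2}$. Your complex-coordinate argument is both elementary and more informative: with $\zeta=x_I+iy_I$ one finds that the line $(\Omega_2\Omega_3)$ has direction $\bar\zeta$ and passes through $z_E/2$, so $z_{E^{\mathrm{new}}}=\tfrac12\big(z_E+\tfrac{\bar\zeta}{\zeta}\,\bar z_E\big)=\kappa\bar\zeta$ with $\kappa=\mathrm{Re}(z_E\zeta)/|\zeta|^2$, i.e.\ $E^{\mathrm{new}}$ is exactly the orthogonal projection of $E$ onto the diagonal $(GC)$. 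The parallel line through which $A$ is reflected passes through $z_E/2-\zeta$, giving $z_{A^{\mathrm{new}}}=z_E-\kappa\bar\zeta-\zeta$, whence the renormalizing translation is $\vec w=\kappa\bar\zeta-z_E$ and $z_{E_1}=2\kappa\bar\zeta-z_E=\tfrac{\bar\zeta}{\zeta}\,\bar z_{E_0}$: the renormalized black mutation is precisely the reflection across the diagonal $(GC)$, and by the same computation the white mutation is reflection across $(AI)$. This geometric picture is invisible in the paper's proof. Two small comments: your phrase ``removes the diagonal component'' is inaccurate---the translation \emph{doubles} that component, turning the projection into a reflection---and once you actually carry the computation through (as above) the sign is determined, not ``fixed by the orientation of the figure''; with the conventions $S_1=\mu_B(S_0)$ and $\delta=\angle COI=2\omega$, one obtains $E_1=\sigma\circ r_{\delta}(E_0)$, so you should double-check the matching of signs against the stated formulas rather than leave the $\pm$ unresolved.
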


\begin{proof}
This follows from computing with a computer algebra program the coordinates of $E_{t+1}$ as functions of the coordinates of $E_t$ and $I_t$ (distinguishing when $t$ is even or odd) and observing that $\cos \delta = \frac{x_I^2-y_I^2}{x_I^2+y_I^2}$ and $\sin \delta = \frac{2x_Iy_I}{x_I^2+y_I^2}$.
\end{proof}

\paragraph*{Acknowledgements}

I thank Richard Kenyon for suggesting the study of this system and for his advice throughout the course of this project. I am also grateful to an anonymous referee for noticing a simplification of the proof of Proposition~\ref{prop:effective}. Finally, I acknowledge the hospitality of the Institute for Computational and Experimental Research in Mathematics (ICERM) in Providence and the Statistical Laboratory in Cambridge, where part of this work was completed.

\label{Bibliography}
\bibliographystyle{plain}
\bibliography{bibliographie}

\begin{thebibliography}{10}

\bibitem{AB}
Sergey~I. Agafonov and Alexander~I. Bobenko.
\newblock Discrete {$Z^\gamma$} and {P}ainlev{\'e} equations.
\newblock {\em Internat. Math. Res. Notices}, (4):165--193, 2000.

\bibitem{BMS2}
Vladimir~V. Bazhanov, Vladimir~V. Mangazeev, and Sergey~M. Sergeev.
\newblock Quantum geometry of three-dimensional lattices.
\newblock {\em J. Stat. Mech. Theory Exp.}, (7):P07004, 27, 2008.

\bibitem{BH}
Alexander~I. Bobenko and Tim Hoffmann.
\newblock Hexagonal circle patterns and integrable systems: patterns with
  constant angles.
\newblock {\em Duke Math. J.}, 116(3):525--566, 2003.

\bibitem{BHS}
Alexander~I. Bobenko, Tim Hoffmann, and Yuri~B. Suris.
\newblock Hexagonal circle patterns and integrable systems: patterns with the
  multi-ratio property and {L}ax equations on the regular triangular lattice.
\newblock {\em Int. Math. Res. Not.}, (3):111--164, 2002.

\bibitem{BP}
Alexander~I. Bobenko and Ulrich Pinkall.
\newblock Discretization of surfaces and integrable systems.
\newblock In {\em Discrete integrable geometry and physics ({V}ienna, 1996)},
  volume~16 of {\em Oxford Lecture Ser. Math. Appl.}, pages 3--58. Oxford Univ.
  Press, New York, 1999.

\bibitem{BS}
Alexander~I. Bobenko and Boris~A. Springborn.
\newblock Variational principles for circle patterns and {K}oebe's theorem.
\newblock {\em Trans. Amer. Math. Soc.}, 356(2):659--689, 2004.

\bibitem{CKP}
Henry Cohn, Richard Kenyon, and James Propp.
\newblock A variational principle for domino tilings.
\newblock {\em J. Amer. Math. Soc.}, 14(2):297--346, 2001.

\bibitem{Duffin}
R.~J. Duffin.
\newblock Potential theory on a rhombic lattice.
\newblock {\em J. Combinatorial Theory}, 5:258--272, 1968.

\bibitem{Glick}
Max Glick.
\newblock The pentagram map and {$Y$}-patterns.
\newblock {\em Adv. Math.}, 227(2):1019--1045, 2011.

\bibitem{GK}
Alexander~B. Goncharov and Richard Kenyon.
\newblock Dimers and cluster integrable systems.
\newblock {\em Ann. Sci. {\'E}c. Norm. Sup{\'e}r. (4)}, 46(5):747--813, 2013.

\bibitem{HS1}
Zheng-Xu He and Oded Schramm.
\newblock On the convergence of circle packings to the {R}iemann map.
\newblock {\em Invent. Math.}, 125(2):285--305, 1996.

\bibitem{HS2}
Zheng-Xu He and Oded Schramm.
\newblock The {$C^\infty$}-convergence of hexagonal disk packings to the
  {R}iemann map.
\newblock {\em Acta Math.}, 180(2):219--245, 1998.

\bibitem{Kenyon1}
R.~Kenyon.
\newblock The {L}aplacian and {D}irac operators on critical planar graphs.
\newblock {\em Invent. Math.}, 150(2):409--439, 2002.

\bibitem{Kenyon2}
Richard Kenyon.
\newblock Height fluctuations in the honeycomb dimer model.
\newblock {\em Comm. Math. Phys.}, 281(3):675--709, 2008.

\bibitem{KO}
Richard Kenyon and Andrei Okounkov.
\newblock Limit shapes and the complex {B}urgers equation.
\newblock {\em Acta Math.}, 199(2):263--302, 2007.

\bibitem{KS}
Richard Kenyon and Jean-Marc Schlenker.
\newblock Rhombic embeddings of planar quad-graphs.
\newblock {\em Trans. Amer. Math. Soc.}, 357(9):3443--3458, 2005.

\bibitem{Miquel}
Auguste Miquel.
\newblock Th{\'e}or{\`e}mes sur les intersections des cercles et des
  sph{\`e}res.
\newblock {\em J. Math. Pures Appl.}, pages 517--522, 1838.

\bibitem{OST1}
Valentin Ovsienko, Richard Schwartz, and Serge Tabachnikov.
\newblock The pentagram map: a discrete integrable system.
\newblock {\em Comm. Math. Phys.}, 299(2):409--446, 2010.

\bibitem{OST2}
Valentin Ovsienko, Richard~Evan Schwartz, and Serge Tabachnikov.
\newblock Liouville-{A}rnold integrability of the pentagram map on closed
  polygons.
\newblock {\em Duke Math. J.}, 162(12):2149--2196, 2013.

\bibitem{RS}
Burt Rodin and Dennis Sullivan.
\newblock The convergence of circle packings to the {R}iemann mapping.
\newblock {\em J. Differential Geom.}, 26(2):349--360, 1987.

\bibitem{Schramm}
Oded Schramm.
\newblock Circle patterns with the combinatorics of the square grid.
\newblock {\em Duke Math. J.}, 86(2):347--389, 1997.

\bibitem{Schwartz1}
Richard Schwartz.
\newblock The pentagram map.
\newblock {\em Experiment. Math.}, 1(1):71--81, 1992.

\bibitem{Schwartz2}
Richard~Evan Schwartz.
\newblock Discrete monodromy, pentagrams, and the method of condensation.
\newblock {\em J. Fixed Point Theory Appl.}, 3(2):379--409, 2008.

\bibitem{Soloviev}
Fedor Soloviev.
\newblock Integrability of the pentagram map.
\newblock {\em Duke Math. J.}, 162(15):2815--2853, 2013.

\bibitem{Thurston}
William Thurston.
\newblock The finite {R}iemann mapping theorem.
\newblock In {\em International Symposium at Purdue University on the occasion
  of the Proof of the Bieberbach Conjecture, Purdue University, West Lafayette,
  USA}, 1985.

\end{thebibliography}

\Addresses
\end{document}